\newcommand*{\ud}{\mathop{}\!\mathrm{d}}
\renewcommand{\thefootnote}{}
\theoremstyle{theorem}
\newtheorem{theorem}{Theorem}[section]
\newtheorem{lemma}[theorem]{Lemma}
\newtheorem{proposition}[theorem]{Proposition}
\theoremstyle{definition}
\newtheorem{definition}[theorem]{Definition}
\newtheorem{remark}[theorem]{Remark}
\numberwithin{equation}{section}
\numberwithin{figure}{section}
\begin{document}

\title{Constructions of helicoidal minimal surfaces and minimal annuli in $\widetilde{E(2)}$}

\author{Yiming ZANG}

\address{Université de Lorraine, CNRS, IECL, F-54000 Nancy, France}

\email{yiming.zang@univ-lorraine.fr}

\maketitle

\begin{abstract}

In this article, we construct two one-parameter families of properly embedded minimal surfaces in a three-dimensional Lie group $\widetilde{E(2)}$, which is the universal covering of the group of rigid motions of Euclidean plane endowed with a left-invariant Riemannian metric. The first one can be seen as a family of helicoids, while the second one is a family of catenoidal minimal surfaces. The main tool that we use for the construction of these surfaces is a Weierstrass-type representation introduced by Meeks, Mira, Pérez and Ros for minimal surfaces in Lie groups of dimension three. In the end, we study the limit of the catenoidal minimal surfaces. As an application of this limit case, we get a new proof of a half-space theorem for minimal surfaces in $\widetilde{E(2)}$.

\end{abstract}

\renewcommand{\thefootnote}{}
\footnotetext{\text{MSC 2020:} 53A10, 53C42.}
\renewcommand{\thefootnote}{}
\footnotetext{\text{Keywords:} minimal annuli; Weierstrass representation; half-space theorem; $\widetilde{E(2)}$.}

\section{Introduction}

This paper concerns minimal surfaces in Riemannian homogeneous manifolds of dimension three. Among many possible three-dimensional homogeneous manifolds, Lie groups with left-invariant metrics are of great interest. Thanks to a work done by Milnor \cite{milnor1976curvatures}, we obtain a complete classification of three-dimensional metric Lie groups. In particular, if the Lie group is unimodular, then there are only six possible cases: the standard Euclidean space $\mathbb{R}^3$, the special unitary group $\textrm{SU}(2)$, the universal covering of the special linear group $\widetilde{\textrm{SL}}(2,\mathbb{R})$, the solvable group $\textrm{Sol}_3$, the Heisenberg group $\textrm{Nil}_3$ and the universal covering of the group of rigid motions of Euclidean plane $\widetilde{E(2)}$.

In the last twenty years, the theory of minimal surfaces and constant mean curvature surfaces (CMC surfaces) in three-dimensional Riemannian homogeneous manifolds has witnessed significant development. With the evolution of the theory, some explicit examples of minimal surfaces in three-dimensional unimodular Lie groups have also been constructed. Erjavec \cite{erjavec2015minimal} gave some examples of minimal surfaces in $\widetilde{\textrm{SL}}(2,\mathbb{R})$, including a catenoid-type minimal surface. In the article \cite{kokubu1997minimal}, Kokubu also got some rotational minimal surfaces in $\widetilde{\textrm{SL}}(2,\mathbb{R})$. Torralbo \cite{torralbo2012compact} found several compact minimal surfaces in $\textrm{SU}(2)$. Daniel and Hauswirth constructed in \cite{daniel2009half} a family of helicoidal minimal surfaces as well as a family of minimal annuli in $\textrm{Nil}_3$. Desmonts \cite{desmonts2015constructions} discovered a family of minimal annuli in $\textrm{Sol}_3$ as well. However, as far as the author can see, very few examples of minimal surfaces in $\widetilde{E(2)}$ are known until now.

The purpose of this paper is to construct two one-parameter families of properly embedded minimal surfaces in $\widetilde{E(2)}$. The first one is a family of helicoidal minimal surfaces, while the second one is a family of minimal annuli. Our main tool to achieve this goal is the Weierstrass representation for minimal surfaces in three-dimensional metric Lie groups introduced by Meeks, Mira, Pérez and Ros in \cite{meeksconstant} and \cite{meeks2012constant}.

This paper is organized in the following order. In Section 2, we introduce the basic material about the ambient space $\widetilde{E(2)}$, including the model that we use, the left-invariant metric and some curvature properties of this Lie group. In Section 3, we present the main tools such as the Weierstrass representation and the Hopf differential that we will use later in the construction of the two families of minimal surfaces.

In Section 4, we are going to construct the first family of properly embedded minimal surfaces in $\widetilde{E(2)}$ called helicoids. The method that we adopt is inspired by Daniel and Hauswirth in \cite{daniel2009half} and Desmonts in \cite{desmonts2015constructions}. We start from an elliptic PDE that the Gauss map of every minimal surface in $\widetilde{E(2)}$ should satisfy. By a separation of variables, we find a suitable solution to this PDE of which the Hopf differential is in a simple form. Taking advantage of the Weierstrass representation, we obtain a one-parameter family of minimal surfaces in $\widetilde{E(2)}$ with this prescribed Gauss map. In the end, we study some geometric properties of this surface, which share many similarities with the well-known helicoids in $\mathbb{R}^3$.

Section 5 is devoted to the construction of another family of minimal surfaces in $\widetilde{E(2)}$ which can be regarded as an analogue of the classic catenoids in $\mathbb{R}^3$. The principal idea is almost the same with the helicoid case, but there are some difficulties in solving a period problem. In the last section, we study the limit situation of this one-parameter family of minimal annuli. As an application, we give a proof of a half-space theorem for minimal surfaces in $\widetilde{E(2)}$, which can be regarded as a particular case of a more general result of Mazet \cite{mazet2013general}.

\vspace{.5 cm} \noindent{\bf Acknowledgements.}
The author is sincerely grateful to his advisor, Benoît Daniel, for his valuable comments and insightful suggestions during the preparation of this paper.

\section{The Lie group $\widetilde{E(2)}$}

The Euclidean rigid motion group $E(2)$ is defined as the matrix group 
\begin{align*}
   E(2) = \left\{\begin{pmatrix}
      \cos{\theta } & -\sin{\theta} & x \\
      \sin{\theta } & \cos{\theta}  & y \\
      0 & 0 & 1
      \end{pmatrix}\Bigg| ~x,y\in \mathbb{R}, \theta\in\mathbb{S}^1\right\}. 
\end{align*}
This group is not simply connected, so we consider its universal covering group $\widetilde{E(2)}$ which is given by the following definition.
\begin{definition}\label{def 2.1}
The Lie group $\widetilde{E(2)}$ is $\mathbb{R}^3$ with the multiplication 
\begin{align*}
    &(x_1, y_1, z_1)*(x_2, y_2, z_2)\\
    =&(x_1+x_2\cos{z_1}-y_2\sin{z_1}, y_1+x_2\sin{z_1}+y_2\cos{z_1},z_1+z_2)
\end{align*}
for all $(x_1, y_1, z_1),(x_2, y_2, z_2)\in\mathbb{R}^3$. The identity element is $(0,0,0)$ and the inverse element of $(x_1, y_1,z_1)$ is $(-x_1\cos{z_1}-y_1\sin{z_1}, x_1\sin{z_1}-y_1\cos{z_1}, -z_1)$. This Lie group is non-commutative.
\end{definition}

Let us consider a basis $\{E_1, E_2, E_3\}$ of the associated Lie algebra $\mathfrak{e}(2)$ (consisting of left-invariant vector fields) given by 
\begin{align*}
    E_1 = \frac{1}{\lambda_1}(\cos{z}\partial_x + \sin{z}\partial_y), \
    E_2 = \frac{1}{\lambda_2}(-\sin{z}\partial_x + \cos{z}\partial_y), \
    E_3 = \frac{1}{\lambda_3}\partial_z,
\end{align*}
with $\lambda_1, \lambda_2, \lambda_3$ positive real numbers. It is easy to check that 
\begin{align}\label{2.1}
    [E_1,E_2] = 0, \ 
    [E_2,E_3] = \frac{\lambda_1}{\lambda_2\lambda_3}E_1, \ 
    [E_3,E_1] = \frac{\lambda_2}{\lambda_1\lambda_3}E_2.
\end{align}
Then we are able to find a left-invariant Riemannian metric on $\widetilde{E(2)}$ such that $\{E_1, E_2, E_3\}$ becomes a left-invariant orthonormal frame. This metric could be determined to be 
\begin{align}\label{2.2}
    g(\lambda_1, \lambda_2, \lambda_3) = \lambda_1^{2}(\cos{z}{\ud}x + \sin{z}{\ud}y)^2 +\lambda_2^{2}(-\sin{z}{\ud}x + \cos{z}{\ud}y)^2 + \lambda_3^{2}{\ud}z^2.
\end{align}
The Lie group $\widetilde{E(2)}$ together with this left-invariant metric becomes a homogeneous manifold.

From now on, parentheses are used to denote the coordinates of a vector field on $\widetilde{E(2)}$ in the frame $\{\partial_x, \partial_y, \partial_z\}$, while brackets are reserved to express the coordinates in the frame $\{E_1, E_2, E_3\}$.

Patrangenaru proved the following theorem which gives a classification of left-invariant metrics on $\widetilde{E(2)}$ (see \cite{inoguchi2007parallel}, \cite{patrangenaru1996classifying}).
\begin{theorem}\label{thm 2.1}
Any left-invariant metrics on $\widetilde{E(2)}$ is isometric to one of the metric $g(\lambda_1, \lambda_2, \lambda_3)$ with $\lambda_1>\lambda_2>0$ and $\lambda_3 = \frac{1}{\lambda_1\lambda_2}$, or $\lambda_1=\lambda_2=\lambda_3=1$.
\end{theorem}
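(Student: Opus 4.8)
The plan is to classify the inner products on the Lie algebra $\mathfrak{e}(2)$ up to isometry, following Milnor's treatment of left-invariant metrics on three-dimensional unimodular Lie groups (and $E(2)=\R^2\rtimes\mathrm{SO}(2)$ is unimodular). First I would invoke Milnor's structural lemma: for any left-invariant metric there is an orthonormal frame $\{e_1,e_2,e_3\}$ of $\mathfrak{e}(2)$ in which the bracket takes the normal form $[e_2,e_3]=\gamma_1 e_1$, $[e_3,e_1]=\gamma_2 e_2$, $[e_1,e_2]=\gamma_3 e_3$. The isomorphism type of $\mathfrak{e}(2)$ fixes the signs of the $\gamma_i$: comparing with the relations \eqref{2.1}, where $[E_1,E_2]=0$ while the other two brackets are nonzero, exactly one structure constant vanishes and the remaining two share a sign, so after permuting the frame and a global orientation flip we may assume $\gamma_3=0$ and $\gamma_1\ge\gamma_2>0$.

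The next step is to match this data with the model metrics $g(\lambda_1,\lambda_2,\lambda_3)$. By \eqref{2.1} the frame $\{E_1,E_2,E_3\}$ is itself of Milnor type, with structure constants $\bigl(\frac{\lambda_1}{\lambda_2\lambda_3},\frac{\lambda_2}{\lambda_1\lambda_3},0\bigr)$. Imposing the normalization $\lambda_1\lambda_2\lambda_3=1$, that is $\lambda_3=\frac{1}{\lambda_1\lambda_2}$, collapses these to $(\lambda_1^2,\lambda_2^2,0)$, so the choice $\lambda_1=\sqrt{\gamma_1}$, $\lambda_2=\sqrt{\gamma_2}$ reproduces any prescribed pair $(\gamma_1,\gamma_2)$. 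Two left-invariant metrics whose Milnor frames carry identical structure constants are related by the linear map identifying the two frames; this map is simultaneously a Lie-algebra automorphism and sends one orthonormal frame to the other, and since $\widetilde{E(2)}$ is simply connected it integrates to a global isometry. Hence every left-invariant metric is isometric to $g(\sqrt{\gamma_1},\sqrt{\gamma_2},\frac{1}{\sqrt{\gamma_1\gamma_2}})$, which already has the claimed shape with $\lambda_1\ge\lambda_2>0$.

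It remains to separate the generic case $\gamma_1>\gamma_2$ from the degenerate case $\gamma_1=\gamma_2$, and here Milnor's curvature formulas are the decisive tool. Setting $\mu_i=\frac{1}{2}(\gamma_1+\gamma_2+\gamma_3)-\gamma_i$, the principal Ricci curvatures are $2\mu_2\mu_3,\,2\mu_1\mu_3,\,2\mu_1\mu_2$; substituting $\gamma_3=0$ yields the eigenvalues $\frac{1}{2}(\gamma_1^2-\gamma_2^2)$, $-\frac{1}{2}(\gamma_1^2-\gamma_2^2)$ and $-\frac{1}{2}(\gamma_1-\gamma_2)^2$. When $\gamma_1=\gamma_2$ all three vanish, so the metric is Ricci-flat; in dimension three this forces vanishing sectional curvature, and a complete simply connected flat manifold is isometric to $\R^3$. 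Thus every metric with $\gamma_1=\gamma_2$ is isometric to the flat one, represented by $\lambda_1=\lambda_2=\lambda_3=1$, which explains why this case reduces to a single point rather than a one-parameter family.

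Finally, when $\gamma_1>\gamma_2$ the three Ricci eigenvalues determine both $(\gamma_1-\gamma_2)^2$ and $\gamma_1^2-\gamma_2^2$, hence $\gamma_1$ and $\gamma_2$ individually; since the Ricci spectrum is an isometry invariant, distinct pairs give genuinely non-isometric metrics, and each is represented exactly once by $g(\lambda_1,\lambda_2,\frac{1}{\lambda_1\lambda_2})$ with $\lambda_1>\lambda_2>0$. I expect the flat case to be the main obstacle: the naive expectation that Milnor's structure constants are complete isometry invariants fails precisely there, because the isometry with $\R^3$ is not induced by any Lie-group automorphism, so it is the curvature computation that both detects this collapse and establishes rigidity throughout the non-flat range.
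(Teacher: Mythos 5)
Your proof is essentially correct, but note that the paper itself does not prove this statement at all: Theorem \ref{thm 2.1} is quoted from Patrangenaru (via \cite{patrangenaru1996classifying} and \cite{inoguchi2007parallel}), so there is no internal proof to compare against. What you have written is a self-contained argument in the spirit of Milnor's paper \cite{milnor1976curvatures}, and each step checks out: Milnor's diagonalizing orthonormal frame exists because $\widetilde{E(2)}$ is unimodular; the sign pattern $(+,+,0)$ is forced by the isomorphism type of $\mathfrak{e}(2)$ (this is exactly Milnor's classification read backwards); with the normalization $\lambda_3=\frac{1}{\lambda_1\lambda_2}$ the model frame $\{E_1,E_2,E_3\}$ realizes any prescribed constants $(\gamma_1,\gamma_2,0)=(\lambda_1^2,\lambda_2^2,0)$; and the frame-matching automorphism integrates to a group isomorphism that is an isometry because both groups are simply connected. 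Your Ricci computation $\bigl(\tfrac{1}{2}(\gamma_1^2-\gamma_2^2),-\tfrac{1}{2}(\gamma_1^2-\gamma_2^2),-\tfrac{1}{2}(\gamma_1-\gamma_2)^2\bigr)$ is the correct one (the paper's Remark \ref{rmk 2.1} contains a typo, $3\mu_1\mu_2$ instead of $2\mu_1\mu_2$, which your version silently fixes), and the collapse of the case $\gamma_1=\gamma_2$ to the single representative $g(1,1,1)$ is handled properly: Ricci-flat implies flat in dimension three since the Weyl tensor vanishes, left-invariant metrics are complete by homogeneity, so Killing--Hopf gives the isometry with Euclidean $\R^3$, which is literally $g(1,1,1)$ in the coordinates of (\ref{2.3}). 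Two small points deserve a word of care if this were written out fully: the bookkeeping showing that frame permutations and sign flips suffice to reach $\gamma_3=0$, $\gamma_1\ge\gamma_2>0$ (a transposition of frame vectors flips all three signs, so this needs one extra flip), and the remark that your final paragraph proves slightly more than the theorem asserts, namely that distinct pairs $\lambda_1>\lambda_2>0$ give mutually non-isometric metrics, which is the uniqueness half of Patrangenaru's classification and a worthwhile addition.
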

In the case when $\lambda_1=\lambda_2=\lambda_3=1$, it also satisfies $\lambda_3 = \frac{1}{\lambda_1\lambda_2}$. Hence metric (\ref{2.2}) can be written as a two-parameter family
\begin{align}\label{2.3}
    g(\lambda_1, \lambda_2) = \lambda_1^{2}(\cos{z}{\ud}x + \sin{z}{\ud}y)^2 +\lambda_2^{2}(-\sin{z}{\ud}x + \cos{z}{\ud}y)^2 + \frac{1}{\lambda_1^{2}\lambda_2^{2}}{\ud}z^2.
\end{align}
In a similar manner, equations (\ref{2.1}) take the form 
\begin{align}\label{2.4}
    [E_1,E_2] = 0, \ 
    [E_2,E_3] = \lambda_1^{2}E_1, \ 
    [E_3,E_1] = \lambda_2^{2}E_2.
\end{align}

\begin{remark}\label{rmk 2.1}
Milnor has shown in his article \cite{milnor1976curvatures} that $\widetilde{E(2)}$ equipped with this left-invariant metric is a 3-dimensional unimodular Lie group. If we choose the orientation such that $\{E_1, E_2, E_3\}$ is positively oriented, then there is a uniquely well-defined self-adjoint linear mapping $L:\mathfrak{e}(2)\rightarrow \mathfrak{e}(2)$ with respect to the metric which satisfies $L(u\times v)=[u,v]$ for all $u,v\in\mathfrak{e}(2)$, where $u \times v$ is the cross product. Relations in (\ref{2.4}) imply that $\lambda_1^{2}, \lambda_2^{2}, 0$ are eigenvalues of $L$. Moreover, this basis $\{E_1, E_2, E_3\}$ diagonalizes the Ricci quadratic form $Ric$. If we denote 
\begin{align}\label{2.5}
    \mu_1 = \frac{\lambda_2^{2}-\lambda_1^{2}}{2}, \ 
    \mu_2 = \frac{\lambda_1^{2}-\lambda_2^{2}}{2}, \ 
    \mu_3 = \frac{\lambda_1^{2}+\lambda_2^{2}}{2},
\end{align}
then the three principal Ricci curvatures are 
\begin{align*}
    Ric(E_1) = 2\mu_2\mu_3, \ Ric(E_2) = 2\mu_1\mu_3, \ Ric(E_3) = 3\mu_1\mu_2. 
\end{align*}
It follows that the scalar curvature $S$ is given by 
\begin{align*}
    S = \sum_{i=1}^{3}Ric(E_i) = 2\mu_2\mu_3.
\end{align*}
As a consequence, $g(1,1)$ is the only left-invariant flat metric on $\widetilde{E(2)}$. Any other metric $g(\lambda_1, \lambda_2)$ with $\lambda_1>\lambda_2>0$ has Ricci curvature form of signature $(+,-,-)$ and strictly negative scalar curvature $S$. In particular, $\widetilde{E(2)}$ together with the unique flat left-invariant metric is isometric to the Euclidean space $\mathbb{R}^3$, hence its isometry group has dimension $6$. However, the isometry group of $\widetilde{E(2)}$ equipped with a metric $g(\lambda_1, \lambda_2)$ with $\lambda_1>\lambda_2>0$ is of dimension $3$ (see \cite{ha2012isometry}).
\end{remark}

With these notations, we are able to compute the Levi-Civita connection $\nabla$ of $\widetilde{E(2)}$ associated to the metric $g(\lambda_1, \lambda_2)$ in the frame  $\{E_1, E_2, E_3\}$. After a simple computation, we obtain  
\begin{align*}
   &\nabla_{E_1}E_1 = 0, \ 
   \nabla_{E_1}E_2 = \mu_1E_3, \  
   \nabla_{E_1}E_3 = \mu_2E_2,\\
   &\nabla_{E_2}E_1 = \mu_1E_3, \ 
   \nabla_{E_2}E_2 = 0, \ 
   \nabla_{E_2}E_3 = \mu_2E_1, \\ 
   &\nabla_{E_3}E_1 = \mu_3E_2, \ 
   \nabla_{E_3}E_2 = -\mu_3E_1, \ 
   \nabla_{E_3}E_3 = 0.
\end{align*}

 \section{The Gauss map and the Weierstrass representation}

Let $\Sigma$ be a Riemann surface and $z=u+iv$ a local complex coordinate on $\Sigma$. Let us consider a conformal immersion $X:\Sigma\rightarrow\widetilde{E(2)}$. We denote by $N:\Sigma\rightarrow T\widetilde{E(2)}$ the unit normal vector field to $\Sigma$, where $T\widetilde{E(2)}$ stands for the tangent bundle of $\widetilde{E(2)}$. For a given point $p\in\Sigma$, we may define a mapping $G:\Sigma\rightarrow\mathbb{S}^2\subset\mathbb{R}^3\equiv\mathfrak{e}(2)$ by 
\begin{align*}
    G(p) = \sum_{i=1}^{3}N_i E_i,
\end{align*}
with $N_i=\langle N_p, (E_i)_p\rangle$. This mapping $G:\Sigma\rightarrow\mathbb{S}^2$ is called the left-invariant Gauss map of $\Sigma$. 
\begin{definition}\label{def 3.1}
    The Gauss map of the immersion $X:\Sigma\rightarrow\widetilde{E(2)}$ is the mapping
    \begin{align*}
        g = \pi\circ G:\Sigma\rightarrow\overline{\mathbb{C}}=\mathbb{C}\cup \{\infty\},
    \end{align*}
    where $\pi$ is the stereographic projection from the south pole, i.e.,
    \begin{align*}
        g = \frac{N_1+iN_2}{1+N_3},
    \end{align*}
    and
    \begin{align*}
        G = \frac{1}{1+|g|^2}\begin{bmatrix}
        g+\Bar{g}\\
        i(\Bar{g}-g)\\
        1-|g|^2
        \end{bmatrix}.
    \end{align*}
\end{definition}

In order to introduce the Weierstrass representation, we need the notion of $H$-potential (see \cite{meeks2012constant}).  
\begin{definition}\label{def 3.2}
    Assuming that $\mu_1,\mu_2,\mu_3\in\mathbb{R}$ are as in (\ref{2.5}) and $H\geq 0$, the $H$-potential for $\widetilde{E(2)}$ is defined to be the mapping $R:\overline{\mathbb{C}}\rightarrow\overline{\mathbb{C}}$ given by
    \begin{align}\label{3.1}
        R(q) = H(1+|q|^2)^2 - \frac{i}{2}\left[\mu_2|1 + q^2|^2 + \mu_1|1-q^2|^2 + 4\mu_3|q|^2\right].
    \end{align}
\end{definition}
Usually, we write $R_q=\partial_{q}R$ and $R_{\Bar{q}}=\partial_{\Bar{q}}R$ for $q\in\mathbb{C}$. With all these notations, we can give the following Weierstrass representation proved by Meeks, Mira, Pérez and Ros (see \cite{meeksconstant} or Theorem 3.15 of \cite{meeks2012constant}).
\begin{theorem}\label{thm 3.1}
Let $X:\Sigma\rightarrow\widetilde{E(2)}$ be a conformally immersed surface of constant mean curvature $H\geq 0$ in $\widetilde{E(2)}$ with left-invariant Gauss map $G:\Sigma\rightarrow\mathbb{S}^2$. Assume that the $H$-potential $R$ does not vanish on $G(\Sigma)$, then\\
(1). The Gauss map $g$ of the immersion $X$ satisfies the elliptic PDE
\begin{align}\label{3.2}
    g_{z\Bar{z}} = \frac{R_q}{R}(g)g_{z}g_{\Bar{z}} + \left(\frac{R_{\Bar{q}}}{R} - \frac{\overline{R_{q}}}{\overline{R}}\right)(g)|g_{z}|^2.
\end{align}
(2). The expression of $X_{z} = \sum_{i=1}^{3}A_i E_i$ can be written as
\begin{align}\label{3.3}
    A_1 = \frac{\eta}{4}\left(\Bar{g} - \frac{1}{\Bar{g}}\right),\quad
    A_2 = \frac{i\eta}{4}\left(\Bar{g}+ \frac{1}{\Bar{g}}\right),\quad
    A_3 = \frac{\eta}{2},\quad 
    \eta = \frac{4\Bar{g}g_{z}}{R(g)}.
\end{align}
Moreover, the induced metric ${\ud}s^2$ on $\Sigma$ is given by 
\begin{align}\label{3.4}
    {\ud}s^2 = \frac{4(1+|g|^2)^2|g_{z}|^2}{|R(g)|^2}|\rm{d}z|^2.
\end{align}
\end{theorem}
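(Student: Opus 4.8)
The plan is to recover the immersion from two pointwise algebraic constraints, to fix the single remaining scalar by differentiating the Gauss map against the connection table of Section~2, and finally to read off the Gauss-map equation from integrability. Concretely, I would first fix a conformal coordinate $z$ and expand $X_z=\sum_{i=1}^3 A_iE_i$. Conformality forces $\langle X_z,X_z\rangle=\sum A_i^2=0$, while tangency of $X_z$ to $\Sigma$ gives $\langle X_z,N\rangle=\sum A_iN_i=0$. Writing the $N_i$ in terms of the stereographic coordinate $g$ as in Definition~\ref{def 3.1}, these two equations cut out two isotropic lines in the complexified tangent plane (the quadratic form $\sum A_i^2$ being nondegenerate on $N^{\perp}$ since $N$ is a real unit vector), and the chosen orientation selects one of them. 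A direct check shows that $\bigl(\bar g-1/\bar g,\ i(\bar g+1/\bar g),\ 2\bigr)$ is isotropic and orthogonal to $N$, so up to a single undetermined complex factor $\eta=2A_3$ the $A_i$ are exactly the bracketed entries of (\ref{3.3}); this already gives the shape of (\ref{3.3}) together with $\sum A_i^2=0$.

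The heart of the argument is to compute $g_z$ and thereby pin down $\eta$. Differentiating $N=\sum N_iE_i$ yields $\partial_zN_i=\langle\nabla_{X_z}N,E_i\rangle+\langle N,\nabla_{X_z}E_i\rangle$. The first term is the Weingarten term: since $\nabla_{X_z}N=-S(X_z)$ and, in conformal coordinates, $S(X_z)=H\,X_z+\frac{Q}{\langle X_z,X_{\bar z}\rangle}X_{\bar z}$ with $Q$ the Hopf coefficient, it contributes $-HA_i-\frac{Q}{\langle X_z,X_{\bar z}\rangle}\bar A_i$. The second term is purely algebraic and is read off from the table $\nabla_{E_j}E_i=\pm\mu_kE_l$ computed at the end of Section~2. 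Forming $g_z=\frac{1+|g|^2}{2}\bigl[(\partial_zN_1+i\partial_zN_2)-g\,\partial_zN_3\bigr]$, the Hopf terms cancel, the terms proportional to $\mu_1+\mu_2$ drop out, and the survivors collapse to $g_z=\frac{\eta}{4\bar g}\bigl[H(1+|g|^2)^2+i\bigl(\mu_1(g^2+\bar g^2)-2\mu_3|g|^2\bigr)\bigr]$. Recognizing the bracket as $R(g)$ (by the expansions of $|1\pm g^2|^2$ together with the identity $\mu_1+\mu_2=0$ on $\widetilde{E(2)}$) gives $\eta=4\bar g\,g_z/R(g)$, which is (\ref{3.3}). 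The induced metric (\ref{3.4}) then follows at once: a short computation yields $\sum|A_i|^2=\frac{|\eta|^2}{8}\frac{(1+|g|^2)^2}{|g|^2}$, and substituting $\eta$ into $\ud s^2=2\sum|A_i|^2\,|\ud z|^2$ reproduces (\ref{3.4}). This settles part~(2).

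For part~(1) I would invoke integrability. Since $X$ is an immersion into the Lie group, the $\mathfrak{e}(2)$-valued form with components $\theta^i=A_i\,\ud z+\bar A_i\,\ud\bar z$ satisfies the Maurer--Cartan structure equations $\ud\theta^k=-\tfrac12\sum c^k_{ij}\,\theta^i\wedge\theta^j$, i.e. the three compatibility relations dictated by (\ref{2.4}). Substituting the now-explicit $A_i=\eta V_i(g)$ with $\eta=4\bar g\,g_z/R(g)$, using $\partial_z\partial_{\bar z}=\partial_{\bar z}\partial_z$ together with the constancy of $H$ (so that $R$ is a fixed function of $g$), these relations reduce to a single second-order equation for $g$; after dividing out the common factors and rewriting through $R_q$ and $R_{\bar q}$ this is precisely (\ref{3.2}).

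The main obstacle is purely computational and lives in the two reductions above: faithfully tracking the many terms produced by the Lie-group connection against the extrinsic Weingarten terms, and checking that they telescope into the compact potential $R$. Two structural facts should be verified first and then used to guide the bookkeeping: the Hopf contribution must cancel when forming $g_z$, so that $g_z$ depends on $g$ and $\eta$ alone and not directly on the second fundamental form; and the identity $\mu_1+\mu_2=0$, special to $\widetilde{E(2)}$, is responsible for the cancellations that leave exactly the combination $\mu_2|1+g^2|^2+\mu_1|1-g^2|^2+4\mu_3|g|^2$ appearing in $R$. Once these cancellations are in hand, both statements follow mechanically.
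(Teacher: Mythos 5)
Your proposal is sound, but note first that the paper contains no proof of Theorem \ref{thm 3.1} at all: it is imported verbatim from Meeks--Mira--P\'erez--Ros (Theorem 3.15 of \cite{meeks2012constant}), so what you have written is a reconstruction of the cited source's argument rather than an alternative to anything in this paper -- and it does in fact follow that source's strategy. The key claims you make all check out. The vector $\bigl(\bar g-1/\bar g,\ i(\bar g+1/\bar g),\ 2\bigr)$ is isotropic and orthogonal to $G$, so conformality plus tangency pin down $X_z$ up to the factor $\eta$. The two structural cancellations you flag are exactly what closes the computation: in $(\partial_z N_1+i\partial_z N_2)-g\,\partial_z N_3$ the Hopf term multiplies $\bar A_1+i\bar A_2-g\bar A_3=\tfrac{\bar\eta g}{2}-\tfrac{g\bar\eta}{2}=0$, while the $H$-term multiplies $A_1+iA_2-gA_3=-\tfrac{\eta(1+|g|^2)}{2\bar g}$; and in the connection contribution (computed from the table at the end of Section 2) the coefficient of $g\bar g^{3}$ is $-(\mu_1+\mu_2)=0$, leaving precisely
\begin{align*}
g_z=\frac{\eta}{4\bar g}\Bigl[H(1+|g|^2)^2+i\bigl(\mu_1(g^2+\bar g^2)-2\mu_3|g|^2\bigr)\Bigr]=\frac{\eta}{4\bar g}\,R(g),
\end{align*}
which is (\ref{3.3}); your identities $\sum_i|A_i|^2=\frac{|\eta|^2(1+|g|^2)^2}{8|g|^2}$ and ${\ud}s^2=2\sum_i|A_i|^2\,|{\ud}z|^2$ then give (\ref{3.4}). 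For part (1), deriving (\ref{3.2}) from the pulled-back Maurer--Cartan equations with the structure constants (\ref{2.4}) is likewise how the source proceeds; the one point worth making explicit in a write-up is that the three structure equations are each purely imaginary, hence three \emph{real} conditions on the two real unknowns $\mathrm{Re}\,g_{z\bar z}$, $\mathrm{Im}\,g_{z\bar z}$, and it is the existence of the immersion $X$ that guarantees this overdetermined system is consistent and collapses to the single complex equation (\ref{3.2}). In short: correct, essentially the proof of the cited theorem, and not a route the paper itself takes since the paper only quotes the result.
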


The reverse of this theorem is also true. In fact, we have the next theorem (see Corollary 3.16 of \cite{meeks2012constant}).

\begin{theorem}\label{thm 3.2}
Let $\Sigma$ be a simply connected Riemann surface and $g:\Sigma\rightarrow\overline{\mathbb{C}}$ a solution of the PDE (\ref{3.2}). If the $H$-potential $R$ has no zeros in $g(\Sigma)\subset\overline{\mathbb{C}}$ and the mapping $g$ is nowhere antiholomorphic, then up to a left translation, there exists a unique conformal immersion $X:\Sigma\rightarrow\widetilde{E(2)}$ with constant mean curvature $H$ and the prescribed Gauss map $g$.
\end{theorem}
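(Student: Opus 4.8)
The plan is to reverse the construction of Theorem~\ref{thm 3.1}: I would use the prescribed Gauss map $g$ to \emph{define} a candidate derivative $X_z$, recognize the reconstruction of $X$ as the integration of a Lie-algebra-valued one-form, and show that the resulting integrability condition is precisely the PDE~(\ref{3.2}). Concretely, since $R$ has no zeros on $g(\Sigma)$ the quantity $\eta=4\bar g g_z/R(g)$ is well defined, so I may set
\[
A_1=\frac{\eta}{4}\Bigl(\bar g-\frac{1}{\bar g}\Bigr),\qquad A_2=\frac{i\eta}{4}\Bigl(\bar g+\frac{1}{\bar g}\Bigr),\qquad A_3=\frac{\eta}{2},
\]
as in~(\ref{3.3}), and introduce the $\mathfrak{e}(2)$-valued one-form
\[
\omega=\sum_{i=1}^{3}\bigl(A_i\,\ud z+\overline{A_i}\,\ud\bar z\bigr)\,e_i,
\]
where $\{e_1,e_2,e_3\}$ is the abstract basis of $\mathfrak{e}(2)$ with brackets~(\ref{2.4}). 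A map $X\colon\Sigma\to\widetilde{E(2)}$ with $X_z=\sum_i A_iE_i$ is exactly one whose pulled-back left Maurer--Cartan form equals $\omega$ (equivalently, whose compatibility condition reads $\nabla_{\partial_{\bar z}}X_z=\nabla_{\partial_z}X_{\bar z}$ for the Levi-Civita connection computed at the end of Section~2).

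By the standard integration theorem for the Maurer--Cartan equation on a simply connected domain (Cartan's theorem), such an $X$ exists and is unique up to left multiplication by a fixed group element, provided $\omega$ satisfies $\ud\omega+\tfrac12[\omega\wedge\omega]=0$. Since the frame $\{E_i\}$ is left invariant, this ``fixed group element'' ambiguity is exactly a left translation of $\widetilde{E(2)}$, which yields both existence and the asserted uniqueness simultaneously. Writing out the structure equations with the constants~(\ref{2.4}) splits the Maurer--Cartan equation into the three scalar conditions
\[
\ud\omega^{1}+\lambda_1^{2}\,\omega^{2}\wedge\omega^{3}=0,\qquad \ud\omega^{2}+\lambda_2^{2}\,\omega^{3}\wedge\omega^{1}=0,\qquad \ud\omega^{3}=0,
\]
with $\omega^{i}=A_i\,\ud z+\overline{A_i}\,\ud\bar z$.

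The heart of the argument, and the step I expect to be the main obstacle, is to verify that these three conditions reduce to the single equation~(\ref{3.2}). Because $A_1,A_2,A_3$ are all built from the one function $g$ through~(\ref{3.3}), the three structure equations cannot be independent; after differentiating the $A_i$, substituting $\eta=4\bar g g_z/R(g)$, and using the relations $\mu_1+\mu_2=0$ and $\mu_3=(\lambda_1^2+\lambda_2^2)/2$ coming from~(\ref{2.5}) together with the explicit form~(\ref{3.1}) of $R$, I expect the system to collapse to~(\ref{3.2}). This computation is essentially the derivation of~(\ref{3.2}) in Theorem~\ref{thm 3.1} read in reverse, and it is the only place where the precise shape of the $H$-potential is used; in particular the real part $H(1+|q|^2)^2$ of $R$ is what pins the mean curvature to the prescribed constant $H$.

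Finally I would confirm that the integral surface carries the prescribed geometry. Conformality is automatic: a direct computation gives $\sum_i A_i^{2}=0$, hence $\langle X_z,X_z\rangle=0$, while $\sum_i|A_i|^2$ reproduces the conformal factor of the metric~(\ref{3.4}); this metric is nondegenerate precisely because $g$ is nowhere antiholomorphic, i.e.\ $g_z\neq0$, so $X$ is a genuine immersion rather than merely a smooth map. Reversing the normalization behind Definition~\ref{def 3.1} and formula~(\ref{3.3}) then identifies the unit normal of $X$, showing that its left-invariant Gauss map projects stereographically to $g$, and the computation of the second fundamental form, already encoded in the choice of $R$, shows that $X$ has constant mean curvature $H$. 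Together with the integration step above, this establishes existence, the prescribed data, and uniqueness up to left translation.
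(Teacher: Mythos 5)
Note first that the paper contains no proof of Theorem~\ref{thm 3.2}: it is quoted from Meeks--Mira--P\'erez--Ros (Corollary 3.16 of \cite{meeks2012constant}), so the only comparison available is with that source's argument --- and your outline is essentially that argument: build $X_z$ from $g$ via (\ref{3.3}), recognize the system as prescribing the pullback of the left Maurer--Cartan form, invoke Cartan's integration theorem on the simply connected $\Sigma$ (which yields existence and uniqueness up to left translation simultaneously), and identify the integrability condition with the PDE (\ref{3.2}). Your supporting checks are also correct: $\sum_i A_i^2=0$ gives conformality, and nondegeneracy of the metric (\ref{3.4}) is exactly the condition $g_z\neq 0$, i.e.\ that $g$ is nowhere antiholomorphic. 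The one caveat is that the heart of the proof --- verifying that the three Maurer--Cartan equations collapse to the single complex equation (\ref{3.2}) under the ansatz (\ref{3.3}), and that the integrated map has mean curvature $H$ with unit normal projecting stereographically to $g$ --- is asserted as an expectation rather than computed; those computations are precisely the content of the cited proof, so your proposal is a faithful skeleton of it rather than a self-contained or alternative argument.
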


In this article, we are mainly interested in studying minimal immersions ($H=0$) in $\widetilde{E(2)}$, hence the $0$-potential $R$ in (\ref{3.1}) can be explicitly determined to be 
\begin{align}\label{3.5}
    R(q) = - \frac{i}{2}\left[\lambda_1^2(q + \Bar{q})^2 - \lambda_2^2(q - \Bar{q})^2\right].
\end{align}
Then we may plug (\ref{3.5}) into (\ref{3.2}) to get the elliptic PDE for this minimal immersion:
\begin{align}\label{3.6}
    g_{z\Bar{z}} = \frac{2\left[ \lambda_1^2(g + \Bar{g}) - \lambda_2^2(g - \Bar{g})\right]}{\lambda_1^2(g + \Bar{g})^2 - \lambda_2^2(g - \Bar{g})^2}g_{z}g_{\Bar{z}}.
\end{align}
\begin{remark}\label{rmk 3.1}
Equation (\ref{3.6}) is actually the harmonic map equation for the mapping $g:\Sigma\rightarrow(\overline{\mathbb{C}}, {\ud}\sigma^2)$, with the metric 
\begin{align*}
    {\ud}\sigma^2 = \frac{|\rm{d}w|^2}{|\lambda_1^2(w + \Bar{w})^2 - \lambda_2^2(w - \Bar{w})^2|}.
\end{align*}
This metric ${\ud}\sigma^2$ is a singular metric which is not well-defined at $0$ and $\infty$. We refer to \cite{jost1984harmonic} and \cite{schoen1978univalent} for more details.
\end{remark}

\begin{definition}\label{def 3.3}
    The Hopf differential associated to $g$ is defined to be the quadratic differential
    \begin{align}\label{3.7}
        Q = \frac{g_{z}\Bar{g}_{z}}{\lambda_1^2(g + \Bar{g})^2 - \lambda_2^2(g - \Bar{g})^2}{\ud}z^2.
    \end{align}
\end{definition}
\begin{remark}\label{rmk 3.2}
It could be easily checked that $Q$ does not depend on the choice of the complex coordinates. Moreover, since $g:\Sigma\rightarrow(\overline{\mathbb{C}}, {\ud}\sigma^2)$ is a harmonic mapping, we know that the Hopf differential $Q$ is holomorphic (see \cite{jost1984harmonic},\cite{schoen1978univalent}). 
\end{remark}

\section{Helicoidal minimal surfaces in $\widetilde{E(2)}$}

In this section we construct a one-parameter family of helicoidal minimal surfaces in $\widetilde{E(2)}$. First of all, we define the helicoidal minimal surface to be a minimal surface containing the $z$-axis which is invariant under the left multiplication by an element $(0,0,T)\in\widetilde{E(2)}$ with some fixed $T\neq 0$ and whose intersection with each plane corresponding to $\{z = constant\}$ is a straight line.

Let us define a mapping $g:\mathbb{C}\rightarrow\overline{\mathbb{C}}$ by 
\begin{align*}
    g(z = u+iv) = e^{-\lambda_1u}e^{ib(v)},
\end{align*}
where the function $b$ satisfies the ODE
\begin{align}\label{4.1}
    b'(v) = \sqrt{\lambda_1^2 - K\left(\lambda_1^2\cos^2{b(v)} + \lambda_2^2\sin^2{b(v)}\right)},
    \quad b(0) = 0, 
\end{align}
with $K\in (-1,1)$.

\begin{proposition}\label{prop 4.1}
The function $b$ is well-defined and satisfies the following properties:\\
(1). The function $b$ is an increasing diffeomorphism from $\mathbb{R}$ to $\mathbb{R}$.\\
(2). $b$ is an odd function.\\
(3). There exists a real number $W>0$ such that 
\begin{align*}
    b(v + W) = b(v) + \pi, \quad \forall v\in\mathbb{R}.
\end{align*}
(4). $b(kW) = k\pi$ for all $k\in\mathbb{Z}$.\\
(5). $b(k\frac{W}{2}) = k\frac{\pi}{2}$ for all $k\in2\mathbb{Z}+1$.
\end{proposition}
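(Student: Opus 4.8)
The plan is to read everything off the autonomous ODE $b'=f(b)$, where
$f(\theta):=\sqrt{\lambda_1^2-K(\lambda_1^2\cos^2\theta+\lambda_2^2\sin^2\theta)}$, by exploiting the symmetries of $f$ together with uniqueness of solutions. First I would check that $f$ is a genuine smooth positive function. Writing $\lambda_1^2\cos^2\theta+\lambda_2^2\sin^2\theta=\lambda_2^2+(\lambda_1^2-\lambda_2^2)\cos^2\theta\in[\lambda_2^2,\lambda_1^2]$ and distinguishing the sign of $K$, the radicand attains its minimum at $\lambda_1^2(1-K)$ when $K\ge 0$ and at $\lambda_1^2-K\lambda_2^2$ when $K<0$; since $K\in(-1,1)$ both values are strictly positive. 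Hence there are constants $0<c\le C$ with $c\le f\le C$, and $f$ is smooth and $\pi$-periodic. For (1), the Cauchy--Lipschitz theorem gives a unique local solution; since $f$ is bounded with bounded derivative it is globally Lipschitz, so the solution is defined on all of $\mathbb{R}$. From $b'=f(b)\ge c>0$ the map $b$ is strictly increasing and smooth, and the bounds $c\le b'\le C$ force $b(v)\to\pm\infty$ as $v\to\pm\infty$, so $b$ is an increasing diffeomorphism of $\mathbb{R}$.

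The remaining items I would obtain by the same mechanism: exhibit a second solution of the ODE with the same initial value and invoke uniqueness. For (2), note that $\cos^2$ and $\sin^2$ are even, so $f$ is even; then $\tilde b(v):=-b(-v)$ satisfies $\tilde b(0)=0$ and $\tilde b'(v)=b'(-v)=f(b(-v))=f(-\tilde b(v))=f(\tilde b(v))$, whence $\tilde b=b$ and $b$ is odd. For (3), set $W:=b^{-1}(\pi)>0$ (well defined by (1)) and $\tilde b(v):=b(v+W)-\pi$; then $\tilde b(0)=0$ and, using the $\pi$-periodicity of $f$, $\tilde b'(v)=f(b(v+W))=f(\tilde b(v)+\pi)=f(\tilde b(v))$, so $\tilde b=b$, i.e. $b(v+W)=b(v)+\pi$. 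Item (4) then follows by a one-line induction on $k\ge 0$ from (3), extended to negative $k$ by the oddness of (2).

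For (5) I would use a reflection symmetry. Since $\cos^2(\pi-\theta)=\cos^2\theta$ and $\sin^2(\pi-\theta)=\sin^2\theta$, the function $\beta(v):=\pi-b(W-v)$ satisfies $\beta(0)=\pi-b(W)=0$ and $\beta'(v)=b'(W-v)=f(\pi-\beta(v))=f(\beta(v))$, so once more $\beta=b$. This yields the reflection identity $b(v)=\pi-b(W-v)$, and evaluating at $v=W/2$ gives $2b(W/2)=\pi$, that is $b(W/2)=\pi/2$. Combining this with the translation rule $b(v+mW)=b(v)+m\pi$ (from (3)) and the oddness (from (2)) gives $b\big(k\tfrac{W}{2}\big)=k\tfrac{\pi}{2}$ for every odd integer $k$.

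The only genuinely substantive point is the positivity of the radicand for all $K\in(-1,1)$ in the first paragraph: it is what makes $f$ smooth and bounded away from $0$, and thereby legitimizes every uniqueness argument that follows. After that, the whole proposition is the repeated trick of constructing a competitor solution and quoting uniqueness, with the $\pi$-periodicity of $f$ driving (3)--(4) and its reflection invariance $f(\pi-\theta)=f(\theta)$ driving (5).
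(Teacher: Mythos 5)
Your proposal is correct and follows essentially the same route as the paper: positive upper and lower bounds on the radicand plus Cauchy--Lipschitz for well-definedness and item (1), then competitor-solution uniqueness arguments ($-b(-v)$ for (2), $b(v+W)-\pi$ for (3)), with (4) read off from (3). The only deviation is in (5), where you introduce a third competitor $\beta(v)=\pi-b(W-v)$ using the symmetry $f(\pi-\theta)=f(\theta)$; the paper instead gets $b\left(\tfrac{W}{2}\right)=\tfrac{\pi}{2}$ in one line by evaluating (3) at $v=-\tfrac{W}{2}$ and invoking oddness, and indeed your reflection identity $b(v)=\pi-b(W-v)$ is an immediate consequence of (2) and (3), so the two arguments are equivalent.
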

\begin{proof}
(1). Since $K\in(-1,1)$ and $\lambda_2^2 \leq \lambda_1^2\cos^2{b(v)} + \lambda_2^2\sin^2{b(v)}\leq \lambda_1^2$, there exists a positive constant $r$ such that $r\leq \lambda_1^2 - K\left(\lambda_1^2\cos^2{b(v)} + \lambda_2^2\sin^2{b(v)}\right)<2\lambda_1^2$, hence $\sqrt{r}\leq b'(v)<\sqrt{2}\lambda_1$. By the Cauchy-Lipschitz theorem, we know the function $b$ is well-defined on $\mathbb{R}$. Moreover, $\lim\limits_{v\to+\infty} b(v) = +\infty$ and $\lim\limits_{v\to-\infty} b(v) = -\infty$ show that $b$ is an increasing diffeomorphism from $\mathbb{R}$ to $\mathbb{R}$.\\
(2). The function $\hat{b}:=-b(-v)$ satisfies ODE (\ref{4.1}) and the initial condition $\hat{b}(0)=0$, thus $\hat{b}=b$ and $b$ is an odd function.\\
(3). It follows from (1) that there exists a positive real number $W$ such that $b(W)=\pi$. Let us consider the function $\Tilde{b}(v):=b(v+W)-\pi$. We may easily verify that $\Tilde{b}$ also satisfies ODE (\ref{4.1}) and the initial condition $\Tilde{b}(0)=0$, hence $\Tilde{b}=b$.\\
(4). This follows directly from (3).\\
(5). $b(\frac{W}{2}) = b(-\frac{W}{2} + W) = -b(\frac{W}{2}) + \pi$, thus $b(\frac{W}{2}) = \frac{\pi}{2}$. Then the result can be deduced from (3). 
\end{proof}

\begin{proposition}\label{prop 4.2}
The mapping $g$ satisfies 
\begin{align*}
    g_{z\Bar{z}} = \frac{2\left[ \lambda_1^2(g + \Bar{g}) - \lambda_2^2(g - \Bar{g})\right]}{\lambda_1^2(g + \Bar{g})^2 - \lambda_2^2(g - \Bar{g})^2}g_{z}g_{\Bar{z}},
\end{align*}
and its Hopf differential is $Q = \frac{K}{16}{\ud}z^2$.
\end{proposition}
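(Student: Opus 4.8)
The plan is to verify both claims by direct computation, exploiting the very specific separated form $g(u+iv) = e^{-\lambda_1 u}e^{ib(v)}$. First I would compute the relevant partial derivatives. Writing $g = e^{-\lambda_1 u + ib(v)}$, the Wirtinger derivatives $g_z = \tfrac{1}{2}(g_u - ig_v)$ and $g_{\bar z} = \tfrac{1}{2}(g_u + ig_v)$ follow from $g_u = -\lambda_1 g$ and $g_v = ib'(v)\,g$. This gives $g_z = \tfrac{1}{2}(-\lambda_1 + b')\,g$ and $g_{\bar z} = \tfrac{1}{2}(-\lambda_1 - b')\,g$, so that the product $g_z g_{\bar z} = \tfrac{1}{4}(\lambda_1^2 - b'^2)\,g^2$. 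For the PDE I also need $g_{z\bar z}$; since $\log g = -\lambda_1 u + ib(v)$ I would find it convenient to compute $g_{z\bar z}$ via the identity $g_{z\bar z} = (\log g)_{z\bar z}\,g + (\log g)_z (\log g)_{\bar z}\, g$. Here $(\log g)_z = \tfrac12(-\lambda_1 + b')$ and $(\log g)_{\bar z} = \tfrac12(-\lambda_1 - b')$, while $(\log g)_{z\bar z} = \tfrac14 \partial_v(ib')\cdot(\text{sign bookkeeping})$, which I expect to reduce to a term involving $b''$.

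Next I would substitute into the harmonic-map equation (\ref{3.6}). The key simplification is that on the image of $g$ we have $|g| = e^{-\lambda_1 u}$ and $g = |g|e^{ib}$, so $g + \bar g = 2|g|\cos b$ and $g - \bar g = 2i|g|\sin b$. Hence the denominator $\lambda_1^2(g+\bar g)^2 - \lambda_2^2(g-\bar g)^2$ becomes $4|g|^2(\lambda_1^2\cos^2 b + \lambda_2^2\sin^2 b)$, and the numerator factor $\lambda_1^2(g+\bar g) - \lambda_2^2(g-\bar g)$ becomes $2|g|(\lambda_1^2\cos b + i\lambda_2^2\sin b)$. The plan is to plug these together with $g_z g_{\bar z} = \tfrac14(\lambda_1^2 - b'^2)g^2$ into the right-hand side of (\ref{3.6}) and check it equals the $g_{z\bar z}$ computed above. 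This is where the ODE (\ref{4.1}) enters decisively: squaring it gives $b'^2 = \lambda_1^2 - K(\lambda_1^2\cos^2 b + \lambda_2^2\sin^2 b)$, so $\lambda_1^2 - b'^2 = K(\lambda_1^2\cos^2 b + \lambda_2^2\sin^2 b)$, which is exactly the combination appearing in the denominator. Differentiating (\ref{4.1}) also supplies $b''$ in terms of $b$ and $b'$, which should match the $(\log g)_{z\bar z}$ contribution.

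For the Hopf differential I would use the definition (\ref{3.7}), $Q = \dfrac{g_z \bar g_z}{\lambda_1^2(g+\bar g)^2 - \lambda_2^2(g-\bar g)^2}\,\ud z^2$. Note $\bar g_z = \overline{g_{\bar z}}$; since $g_{\bar z} = \tfrac12(-\lambda_1 - b')g$ with $b'$ real, we get $\bar g_z = \tfrac12(-\lambda_1 - b')\bar g$. Therefore $g_z \bar g_z = \tfrac14(-\lambda_1 + b')(-\lambda_1 - b')\,g\bar g = \tfrac14(\lambda_1^2 - b'^2)|g|^2$. Dividing by the denominator $4|g|^2(\lambda_1^2\cos^2 b + \lambda_2^2\sin^2 b)$ gives $Q = \dfrac{\lambda_1^2 - b'^2}{16(\lambda_1^2\cos^2 b + \lambda_2^2\sin^2 b)}\,\ud z^2$, and substituting $\lambda_1^2 - b'^2 = K(\lambda_1^2\cos^2 b + \lambda_2^2\sin^2 b)$ from the squared ODE makes the factor cancel, yielding the constant $Q = \tfrac{K}{16}\,\ud z^2$.

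I expect the main obstacle to be the PDE verification rather than the Hopf differential, because it requires tracking the $b''$ term carefully: the left-hand side $g_{z\bar z}$ produces a genuine second-derivative contribution, and I must confirm that differentiating (\ref{4.1}) gives precisely the $b''$ needed for the two sides to agree, including all factors of $i$ arising from $g - \bar g$ being imaginary. The bookkeeping of real versus imaginary parts in the numerator $\lambda_1^2\cos b + i\lambda_2^2\sin b$ against the complex factor $(-\lambda_1 + b')$ in $g_z$ is where sign and factor errors are most likely, so I would organize the computation by separating the real ($u$-direction, governed by $\lambda_1$) and imaginary ($v$-direction, governed by $b'$ and $b''$) parts and match them independently.
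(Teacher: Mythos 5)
Your proposal is correct and follows essentially the same route as the paper: compute the Wirtinger derivatives of $g=e^{-\lambda_1u}e^{ib(v)}$ directly, then use the squared ODE (\ref{4.1}) in the form $\lambda_1^2-b'^2=K(\lambda_1^2\cos^2 b+\lambda_2^2\sin^2 b)$ together with its differentiated version (\ref{4.2}) for $b''$ to verify the PDE, and observe that the same substitution collapses the Hopf differential to $\frac{K}{16}\ud z^2$. One small slip to fix when you carry it out: since $g-\bar g=2i|g|\sin b$, the numerator factor is $2|g|\left(\lambda_1^2\cos b - i\lambda_2^2\sin b\right)$ (not $+i\lambda_2^2\sin b$); with that sign both sides of (\ref{3.6}) equal $\frac{K}{4}|g|e^{2ib}\left(\lambda_1^2\cos b - i\lambda_2^2\sin b\right)$, exactly as your real/imaginary matching scheme would confirm.
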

\begin{proof}
A direct computation shows 
\begin{align*}
    g_{z} = \frac{1}{2}\left(b'(v)-\lambda_1\right)e^{-\lambda_1u}e^{ib(v)},
\end{align*}
\begin{align*}
   g_{\Bar{z}} = -\frac{1}{2}\left(b'(v)+\lambda_1\right)e^{-\lambda_1u}e^{ib(v)}, 
\end{align*}
\begin{align*}
    \Bar{g}_{z} = -\frac{1}{2}\left(b'(v)+\lambda_1\right)e^{-\lambda_1u}e^{-ib(v)},
\end{align*}
\begin{align*}
   g_{z\Bar{z}} = \frac{1}{4}\left[\left(-b'^2(v)+\lambda_1^2\right) + ib''(v)\right]e^{-\lambda_1u}e^{ib(v)}. 
\end{align*}    
Moreover, by differentiating (\ref{4.1}) we obtain
\begin{align}\label{4.2}
    b''(v) = K(\lambda_1^2 - \lambda_2^2)\sin{b(v)}\cos{b(v)}.
\end{align}
Taking advantage of these formulas, we may see that the equation is satisfied. Then the expression of $Q$ follows immediately from (\ref{3.7}). 
\end{proof}

Comparing Proposition \ref{prop 4.2} and Theorem \ref{thm 3.1}, we are inspired to utilise the Weierstrass representation to find a conformal minimal immersion $X(u+iv) = (x_1, x_2, x_3)$ into $\widetilde{E(2)}$ whose Gauss map is $g$. In order to do this, we need the $0$-potential $R$ which takes the form
\begin{align*}
    R(g) = -2ie^{-2\lambda_1u}\left(\lambda_1^2\cos^2{b(v)} + \lambda_2^2\sin^2{b(v)}\right).
\end{align*}
Therefore, we get $\eta = \frac{4\Bar{g}g_{z}}{R(g)} = -i\frac{K}{\lambda_1 + b'(v)}$ and $X_{z} = \sum_{i=1}^{3}{x_i}_{z} \partial_{x_i} = \sum_{i=1}^{3}A_i E_i$ with
\begin{align}\label{4.3}
    A_1 = -\frac{K}{2(\lambda_1 + b'(v))}\left(\cosh{(-\lambda_1u)}\sin{b(v)} + i\sinh{(-\lambda_1u)}\cos{b(v)}\right),
\end{align}
\begin{align}\label{4.4}
    A_2 = \frac{K}{2(\lambda_1 + b'(v))}\left(\cosh{(-\lambda_1u)}\cos{b(v)} - i\sinh{(-\lambda_1u)}\sin{b(v)}\right),
\end{align}
\begin{align}\label{4.5}
    A_3 = \frac{-iK}{2(\lambda_1 + b'(v))}.
\end{align}
The two expressions of $X_z$ show that 

 \begin{equation}\label{4.6}
\left\{
\begin{aligned}
  & ~{x_1}_z = \frac{1}{\lambda_1}A_1\cos{x_3} - \frac{1}{\lambda_2}A_2\sin{x_3},\\
  & ~{x_2}_z = \frac{1}{\lambda_1}A_1\sin{x_3} + \frac{1}{\lambda_2}A_2\cos{x_3},\\
  & ~{x_3}_z = \lambda_1\lambda_2A_3.
\end{aligned}
\right.
\end{equation}
We can see immediately that $x_3$ is actually a one-variable function of $v$ which satisfies
\begin{align}\label{4.7}
    x_3'(v) = \frac{\lambda_1\lambda_2K}{\lambda_1+b'(v)}.
\end{align}

\begin{remark}\label{rmk 4.1}
If $K=0$, then $x_3$ is a constant, and the image of $X$ reduces to a point. Thus we will exclude this case in the sequel.
\end{remark}

\begin{proposition}\label{prop 4.3}
If we set $x_3(0)=0$, then \\
(1). The function $x_3$ is a well-defined bijection from $\mathbb{R}$ to $\mathbb{R}$.\\
(2). The function $x_3$ is odd.\\
(3). For all $v\in\mathbb{R}$, we have
\begin{align*}
    x_3(v+W) = x_3(v) + x_3(W).
\end{align*}
\end{proposition}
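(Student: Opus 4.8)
The plan is to analyze the function $x_3$ directly through its defining ODE \eqref{4.7}, exploiting the periodicity and parity properties of $b$ already established in Proposition \ref{prop 4.1}. Since $b'(v) = \sqrt{\lambda_1^2 - K(\lambda_1^2\cos^2 b + \lambda_2^2\sin^2 b)}$ is bounded between $\sqrt{r}$ and $\sqrt{2}\lambda_1$ (from the proof of Proposition \ref{prop 4.1}(1)), the integrand $\frac{\lambda_1\lambda_2 K}{\lambda_1 + b'(v)}$ is bounded, continuous, and of constant sign (the sign of $K$). Thus $x_3(v) = \int_0^v \frac{\lambda_1\lambda_2 K}{\lambda_1 + b'(s)}\,\ud s$ is well-defined on all of $\mathbb{R}$.

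For part (1), I would note that $x_3'(v)$ is continuous and never zero (it has the constant sign of $K \neq 0$ by Remark \ref{rmk 4.1}), so $x_3$ is strictly monotone, hence injective. Surjectivity onto $\mathbb{R}$ follows from the uniform bound $|x_3'(v)| \geq \frac{\lambda_1\lambda_2|K|}{\lambda_1 + \sqrt{2}\lambda_1}$, which is a positive constant; this linear growth lower bound forces $x_3(v) \to \pm\infty$ as $v \to \pm\infty$, so $x_3$ is a bijection of $\mathbb{R}$.

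For part (2), the cleanest route is to show that $v \mapsto -x_3(-v)$ satisfies the same ODE \eqref{4.7} with the same initial condition $x_3(0)=0$. Differentiating and using that $b$ is odd (Proposition \ref{prop 4.1}(2)), so $b'$ is even, gives $\frac{\ud}{\ud v}(-x_3(-v)) = x_3'(-v) = \frac{\lambda_1\lambda_2 K}{\lambda_1 + b'(-v)} = \frac{\lambda_1\lambda_2 K}{\lambda_1 + b'(v)}$, matching \eqref{4.7}; uniqueness of the solution then yields oddness. Alternatively one can substitute $s \mapsto -s$ directly in the integral and invoke the evenness of $b'$.

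For part (3), I would consider the function $v \mapsto x_3(v+W) - x_3(v)$ and show it is constant by differentiating: its derivative is $\frac{\lambda_1\lambda_2 K}{\lambda_1 + b'(v+W)} - \frac{\lambda_1\lambda_2 K}{\lambda_1 + b'(v)}$, which vanishes because Proposition \ref{prop 4.1}(3) gives $b(v+W) = b(v) + \pi$, so $b'(v+W) = b'(v)$ (the right-hand side of \eqref{4.1} is $\pi$-periodic in $b$ since it depends only on $\cos^2 b$ and $\sin^2 b$). Evaluating at $v = 0$ and using $x_3(0) = 0$ identifies the constant as $x_3(W)$, giving the claimed relation. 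The main obstacle is essentially bookkeeping: establishing the $\pi$-periodicity of $b'$ that makes the translation argument work, but this follows immediately from the structure of \eqref{4.1} and the shift property of $b$, so I expect no genuine difficulty — the proof is a direct parallel to the argument for Proposition \ref{prop 4.1}.
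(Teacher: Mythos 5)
Your proposal is correct and follows essentially the same route as the paper: part (1) uses the bounds $\sqrt{r}\leq b'(v)<\sqrt{2}\lambda_1$ to make $x_3'$ bounded away from zero with the sign of $K$, part (2) deduces oddness from the evenness of $b'$ together with $x_3(0)=0$, and part (3) rests on $x_3'(v+W)=x_3'(v)$ obtained by differentiating $b(v+W)=b(v)+\pi$. The only difference is that you spell out the uniqueness and constancy arguments that the paper leaves implicit.
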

\begin{proof}
(1). As we have seen in the proof of Proposition \ref{prop 4.1}, $\sqrt{r}\leq b'(v)<\sqrt{2}\lambda_1$ for some positive number $r$, hence $x_3'(v)$ is bounded by two constants with the same sign of $K$. Consequently, $x_3$ is a well-defined bijection from $\mathbb{R}$ to $\mathbb{R}$.\\
(2). Since the function $b$ is an odd function, $b'$ is even. Thus $x_3'$ is also even. In combination with the condition $x_3(0)=0$, we know that $x_3$ is odd.\\
(3). This is because $x_3'(v) = x_3'(v+W)$ and $x_3(W) - x_3(0) = x_3(W)$. 
\end{proof}
By solving the equations in (\ref{4.6}), we get 
\begin{align*}
    x_1(u+iv) = \frac{1}{\lambda_1^2\lambda_2}\left(\frac{1}{\lambda_1}\cos{x_3(v)}\sin{b(v)} + \frac{1}{\lambda_2}\sin{x_3(v)}\cos{b(v)}\right)x_3'(v)\sinh{(-\lambda_1u)},
\end{align*}
\begin{align*}
    x_2(u+iv) = \frac{1}{\lambda_1^2\lambda_2}\left(\frac{1}{\lambda_1}\sin{x_3(v)}\sin{b(v)} - \frac{1}{\lambda_2}\cos{x_3(v)}\cos{b(v)}\right)x_3'(v)\sinh{(-\lambda_1u)}.
\end{align*}

We may now summarise these results as the following theorem.
\begin{theorem}\label{thm 4.1}
Let $K$ be a non-zero real number with $|K|<1$, and $b$ be the function defined by ODE (\ref{4.1}). The function $x_3$ is defined by ODE (\ref{4.7}) with the condition $x_3(0)=0$. Then the mapping 
\begin{align*}
    X(u+iv)=\begin{pmatrix}
        \frac{1}{\lambda_1^2\lambda_2}\left(\frac{1}{\lambda_1}\cos{x_3(v)}\sin{b(v)} + \frac{1}{\lambda_2}\sin{x_3(v)}\cos{b(v)}\right)x_3'(v)\sinh{(-\lambda_1u)}\\
        \frac{1}{\lambda_1^2\lambda_2}\left(\frac{1}{\lambda_1}\sin{x_3(v)}\sin{b(v)} - \frac{1}{\lambda_2}\cos{x_3(v)}\cos{b(v)}\right)x_3'(v)\sinh{(-\lambda_1u)}\\
        x_3(v)
        \end{pmatrix}
\end{align*}
gives a conformal minimal immersion from $\mathbb{C}$ to $\widetilde{E(2)}$ whose Gauss map is 
\begin{align*}
    g(z = u+iv) = e^{-\lambda_1u}e^{ib(v)}.
\end{align*}
\end{theorem}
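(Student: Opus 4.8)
The plan is to verify that the explicit map $X$ in the statement is precisely the conformal minimal immersion produced by the Weierstrass representation of Theorem~\ref{thm 3.2}, applied to the Gauss map $g(u+iv)=e^{-\lambda_1 u}e^{ib(v)}$. By Proposition~\ref{prop 4.2} we already know that $g$ solves the Gauss map PDE~(\ref{3.6}); and since $g_z=\tfrac12(b'(v)-\lambda_1)e^{-\lambda_1 u}e^{ib(v)}$ never vanishes (because $0<b'(v)<\sqrt2\,\lambda_1$ forces $b'(v)\neq\lambda_1$ only when $b'(v)<\lambda_1$, which holds for $K>0$; for the sign of $\eta$ the relevant fact is simply $g_z\neq 0$), the map $g$ is nowhere antiholomorphic, and the $0$-potential $R(g)=-2ie^{-2\lambda_1 u}(\lambda_1^2\cos^2 b+\lambda_2^2\sin^2 b)$ has no zeros on $g(\mathbb{C})$. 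Hence Theorem~\ref{thm 3.2} guarantees a unique conformal minimal immersion with this Gauss map, up to left translation, and all that remains is to check that the displayed $X$ is that immersion.

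First I would record the components $A_1,A_2,A_3$ of $X_z$ in the frame $\{E_1,E_2,E_3\}$, which are already computed in~(\ref{4.3})--(\ref{4.5}) from the formula $\eta=4\bar g g_z/R(g)$. Converting to the coordinate frame $\{\partial_x,\partial_y,\partial_z\}$ via the definition of $E_1,E_2,E_3$ yields the system~(\ref{4.6}). The third equation ${x_3}_z=\lambda_1\lambda_2 A_3$ depends only on $v$ and integrates (with $x_3(0)=0$) to the function $x_3$ defined by ODE~(\ref{4.7}); this identifies the third component of $X$ in the statement. Substituting this $x_3(v)$ back into the first two equations of~(\ref{4.6}) turns them into explicit first-order equations for $x_1,x_2$ whose right-hand sides factor as a function of $v$ times $\partial_u\sinh(-\lambda_1 u)$ (equivalently a pure $z$-derivative); integrating and imposing $X(0)=(0,0,0)$ gives exactly the two displayed expressions for $x_1,x_2$.

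The main verification, and the step I expect to be the most tedious, is confirming that the proposed $x_1,x_2$ genuinely satisfy both complex equations in~(\ref{4.6}). Because $z=u+iv$ and $\partial_z=\tfrac12(\partial_u-i\partial_v)$, checking ${x_1}_z$ and ${x_2}_z$ requires differentiating the products of $\cos x_3(v),\sin x_3(v),\cos b(v),\sin b(v)$ and $x_3'(v)$ in $v$ as well as $\sinh(-\lambda_1 u)$ in $u$, then matching real and imaginary parts against $\tfrac1{\lambda_1}A_1\cos x_3-\tfrac1{\lambda_2}A_2\sin x_3$ and its companion. Here one must use the relations $b''=K(\lambda_1^2-\lambda_2^2)\sin b\cos b$ from~(\ref{4.2}), the identity $x_3'=\lambda_1\lambda_2 K/(\lambda_1+b')$ from~(\ref{4.7}), and the simplification $(\lambda_1+b')(\lambda_1-b')=\lambda_1^2-b'^2=K(\lambda_1^2\cos^2 b+\lambda_2^2\sin^2 b)$ coming from~(\ref{4.1}); these are exactly what make the $v$-derivatives collapse to the required form.

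Finally I would note that since $X$ is obtained by integrating $X_z$ for a map $g$ solving~(\ref{3.6}) with nonvanishing, nowhere-antiholomorphic data and non-zero $R(g)$, conformality and minimality ($H=0$) are automatic from Theorem~\ref{thm 3.2}; alternatively one can verify conformality directly by checking $\langle X_z,X_z\rangle=0$, which is equivalent to $A_1^2+A_2^2+A_3^2=0$, an identity that follows at once from the algebraic form of~(\ref{3.3}). Thus the displayed $X$ is a well-defined conformal minimal immersion of $\mathbb{C}$ into $\widetilde{E(2)}$ with the prescribed Gauss map $g$, completing the proof.
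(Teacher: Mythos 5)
Your proposal follows essentially the same route as the paper: verify the PDE via Proposition \ref{prop 4.2}, invoke the Weierstrass representation (Theorems \ref{thm 3.1}--\ref{thm 3.2}), and integrate the system (\ref{4.6}) with $x_3$ determined by (\ref{4.7}), which is exactly how the paper arrives at Theorem \ref{thm 4.1}. One small repair: your parenthetical argument that $g_z \neq 0$ only treats $K>0$; the clean statement is that (\ref{4.1}) gives $b'^2(v)-\lambda_1^2 = -K\left(\lambda_1^2\cos^2{b(v)}+\lambda_2^2\sin^2{b(v)}\right)\neq 0$ whenever $K\neq 0$, so $b'(v)\neq\lambda_1$ for either sign of $K$ (with $b'>\lambda_1$ when $K<0$), and hence $g$ is nowhere antiholomorphic in all cases covered by the theorem.
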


Now we are going to prove that the surfaces constructed above are exactly the desired helicoidal surfaces.
\begin{theorem}\label{thm 4.2}
Let $X:\mathbb{C}\rightarrow\widetilde{E(2)}$ be the minimal immersion given by Theorem \ref{thm 4.1}. Then\\
(1). The $x_3$-axis is contained in the image of $X$.\\
(2). The intersection of the surface with each plane $\{x_3 = constant\}$ is a straight line.\\
(3). The surface is invariant under the left multiplication by $(0,0,2x_3(W))$, i.e., 
\begin{align}\label{4.8}
    (0,0,2x_3(W))*X(u+iv) = X(u+i(v+2W)),
\end{align}
for all $u,v\in\mathbb{R}$.
\end{theorem}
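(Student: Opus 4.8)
The plan is to verify each of the three claims by direct substitution into the explicit formula for $X(u+iv)$ given in Theorem \ref{thm 4.1}, using the structural properties of $b$ and $x_3$ established in Propositions \ref{prop 4.1} and \ref{prop 4.3}. For part (1), I would evaluate $X$ along the locus $u=0$. Since $\sinh(-\lambda_1 u)$ vanishes at $u=0$, both $x_1$ and $x_2$ are identically zero there, while $x_3(v)$ ranges over all of $\mathbb{R}$ by part (1) of Proposition \ref{prop 4.3}. Hence $X(\{u=0\})$ is exactly the set $\{(0,0,x_3(v)) : v\in\mathbb{R}\} = \{(0,0,t):t\in\mathbb{R}\}$, which is the $x_3$-axis.

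For part (2), I would fix a value $x_3 = c$. Because $x_3$ depends only on $v$ and is a bijection $\mathbb{R}\to\mathbb{R}$, the level set $\{x_3 = c\}$ corresponds to a single fixed value $v=v_0$, with $u$ free. Substituting a fixed $v_0$ into the formulas for $x_1$ and $x_2$, I observe that all the trigonometric factors $\cos x_3(v_0)$, $\sin x_3(v_0)$, $\sin b(v_0)$, $\cos b(v_0)$ and the factor $x_3'(v_0)$ become constants, so that $x_1$ and $x_2$ are each a fixed constant times $\sinh(-\lambda_1 u)$. As $u$ varies over $\mathbb{R}$, the point $(x_1,x_2)$ traces a line through the origin in the $\{x_3=c\}$ plane (the single parameter $\sinh(-\lambda_1 u)$ sweeps all of $\mathbb{R}$), establishing that the intersection is a straight line.

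For part (3), the identity (\ref{4.8}) is the genuine content and the main obstacle. I would expand the left-invariant multiplication from Definition \ref{def 2.1} with first factor $(0,0,2x_3(W))$: writing $X(u+iv)=(x_1,x_2,x_3)$, the product $(0,0,2x_3(W))*(x_1,x_2,x_3)$ equals $(x_1\cos 2x_3(W) - x_2\sin 2x_3(W),\ x_1\sin 2x_3(W) + x_2\cos 2x_3(W),\ 2x_3(W)+x_3)$. It then remains to match this against the three components of $X(u+i(v+2W))$. The third components agree by part (3) of Proposition \ref{prop 4.3} applied twice, which gives $x_3(v+2W)=x_3(v)+2x_3(W)$. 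For the first two components, the key simplification comes from Proposition \ref{prop 4.1}(3): $b(v+2W)=b(v)+2\pi$, so $\sin b$ and $\cos b$ are unchanged, and $\sinh(-\lambda_1 u)$ and $x_3'(v)$ are likewise unaffected (the latter since $x_3'$ has period $W$). Thus the shift $v\mapsto v+2W$ changes only the arguments $x_3(v)\mapsto x_3(v)+2x_3(W)$ inside the $\cos x_3$ and $\sin x_3$ factors. The remaining work is a trigonometric identity: I would verify that applying the planar rotation by angle $2x_3(W)$ to the vector $(x_1,x_2)$ is exactly equivalent to replacing $x_3(v)$ by $x_3(v)+2x_3(W)$ in their defining expressions, using the angle-addition formulas for the combinations $\cos x_3 \sin b + (\lambda_1/\lambda_2)^{\pm}\sin x_3 \cos b$ appearing in $x_1$ and $x_2$. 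This rotation-versus-phase-shift bookkeeping is the delicate step; once the coefficient factors $1/(\lambda_1^2\lambda_2)$, $1/\lambda_1$, $1/\lambda_2$ and $x_3'(v)\sinh(-\lambda_1 u)$ are factored out, the verification reduces to confirming that the two-dimensional rotation acts correctly on the vector $(\cos x_3(v),\sin x_3(v))$, completing the proof.
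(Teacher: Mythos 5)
Your proposal follows the paper's proof essentially verbatim: part (1) by evaluating at $u=0$, part (2) by fixing the unique $v_0$ with $x_3(v_0)=C$ so that $x_1,x_2$ become constant multiples of $\sinh(-\lambda_1 u)$, and part (3) by expanding the group law and combining $b(v+2W)=b(v)+2\pi$, the $W$-periodicity of $x_3'$, and $x_3(v+2W)=x_3(v)+2x_3(W)$ with the angle-addition formulas --- the paper compresses this last verification into a single sentence ("direct verification using Proposition \ref{prop 4.3}"), and your expanded account of it is correct. The one place where your write-up falls short is part (2): asserting that $(x_1,x_2)$ "traces a line" requires checking that the two constant coefficients multiplying $\sinh(-\lambda_1 u)$ do not both vanish, since otherwise the intersection would degenerate to the single point $(0,0,C)$. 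The paper verifies this explicitly:
\begin{align*}
k_1^2 + k_2^2 = \frac{1}{\lambda_1^4\lambda_2^2}\,x_3'^2(v_0)\left(\frac{1}{\lambda_1^2}\sin^2{b(v_0)} + \frac{1}{\lambda_2^2}\cos^2{b(v_0)}\right) > 0,
\end{align*}
which holds because $x_3'(v_0)\neq 0$ (here one uses $K\neq 0$, as required in Theorem \ref{thm 4.1} and Remark \ref{rmk 4.1}) and the trigonometric factor is always positive. With this one-line addition your proof is complete and coincides with the paper's.
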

\begin{proof}
(1). Let us take $u=0$, then we see $x_1(iv) = x_2(iv) = 0$. By Proposition \ref{prop 4.3}, it is known that $x_3$ is a bijection from $\mathbb{R}$ to $\mathbb{R}$, hence the $x_3$-axis is contained in the image of $X$.\\
(2). Let us consider the plane $\{x_3 = C\}$, where $C\in\mathbb{R}$ is a constant. Then there exists a unique $v_0\in\mathbb{R}$ such that $x_3(v_0)=C$. The intersection of the surface with this plane is the curve $L = \{(k_1\sinh{(-\lambda_1u)},k_2\sinh{(-\lambda_1u)},C)\mid u\in\mathbb{R}\}$, where 
\begin{align*}
    k_1 = \frac{1}{\lambda_1^2\lambda_2}\left(\frac{1}{\lambda_1}\cos{x_3(v_0)}\sin{b(v_0)} + \frac{1}{\lambda_2}\sin{x_3(v_0)}\cos{b(v_0)}\right)x_3'(v_0),
\end{align*}
\begin{align*}
    k_1 = \frac{1}{\lambda_1^2\lambda_2}\left(\frac{1}{\lambda_1}\sin{x_3(v_0)}\sin{b(v_0)} - \frac{1}{\lambda_2}\cos{x_3(v_0)}\cos{b(v_0)}\right)x_3'(v_0).
\end{align*}
Since $\sinh(-\lambda_1u)$ is bijective from $\mathbb{R}$ to $\mathbb{R}$ and 
\begin{align*}
    k_1^2 + k_2^2 = \frac{1}{\lambda_1^4\lambda_2^2}x_3'^2(v_0)\left(\frac{1}{\lambda_1^2}\sin^2{b(v_0)} + \frac{1}{\lambda_2^2}\cos^2{b(v_0)}\right)>0,
\end{align*}
$L$ is therefore a straight line.\\
(3). This is a direct verification by using Proposition \ref{prop 4.3}.
\end{proof}

\begin{definition}\label{def 4.1}
   The helicoidal minimal surface given by Theorem \ref{thm 4.1} is called a helicoid of parameter $K$ and we denote it by $\mathfrak{H}_K$. The set $\{X(u+iv)\mid u\in\mathbb{R}, v\in[0, 2W)\}$ is called the fundamental piece of the helicoid $\mathfrak{H}_K$. 
\end{definition}

\begin{remark}\label{rmk 4.2}
The surface $\mathfrak{H}_K$ is embedded in $\widetilde{E(2)}$ because $x_3$ is a bijection from $\mathbb{R}$ to $\mathbb{R}$. Moreover, it is even properly embedded.
\end{remark}

\begin{remark}\label{rmk 4.3}
It is easy to see that 
 \begin{equation*}
\left\{
\begin{aligned}
  & ~x_1(-u+iv) = -x_1(u+iv),\\
  & ~x_2(-u+iv) = -x_2(u+iv),\\
  & ~x_3(-u+iv) = x_3(u+iv),
\end{aligned}
\right.
\end{equation*}
hence the helicoid $\mathfrak{H}_K$ is symmetric by rotation of angle $\pi$ around the $x_3$-axis. Similarly, we get 
 \begin{equation*}
\left\{
\begin{aligned}
  & ~x_1(u-iv) = -x_1(u+iv),\\
  & ~x_2(u-iv) = x_2(u+iv),\\
  & ~x_3(u-iv) = -x_3(u+iv).
\end{aligned}
\right.
\end{equation*}
It means $\mathfrak{H}_K$ is symmetric by rotation of angle $\pi$ around the $x_2$-axis. As a consequence, $\mathfrak{H}_K$ is also symmetric by rotation of angle $\pi$ around the $x_1$-axis.
\end{remark}

\begin{proposition}\label{prop 4.4}
For every real number $T\neq 0$, there exists a $\mathfrak{H}_K$ whose period is $T$.
\end{proposition}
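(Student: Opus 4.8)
The plan is to view the vertical period as a function of the parameter $K$ and to realize every prescribed value by the intermediate value theorem. By Theorem \ref{thm 4.2}(3) the helicoid $\mathfrak{H}_K$ is invariant under left translation by $(0,0,P(K))$, where $P(K):=2x_3(W)$. First I would turn $P(K)$ into an explicit one–dimensional integral over a full turn of $b$. Using (\ref{4.7}), the substitution $b=b(v)$ with $\ud v=\ud b/b'(v)$, and $b(W)=\pi$ from Proposition \ref{prop 4.1}, one obtains
\begin{align*}
    P(K)=2\int_0^W x_3'(v)\,\ud v=2\lambda_1\lambda_2 K\int_0^\pi\frac{\ud b}{b'\,(\lambda_1+b')},\qquad b'=\sqrt{\lambda_1^2-K(\lambda_1^2\cos^2 b+\lambda_2^2\sin^2 b)}.
\end{align*}
The integral is strictly positive for every $K\in(-1,1)$, so the sign of $P(K)$ equals the sign of $K$; it therefore suffices to analyse $K\in(0,1)$ in order to produce all positive periods.

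Next I would pin down the two boundary behaviours of $P$ on $(0,1)$. As $K\to 0^+$ the integrand stays uniformly bounded, since $b'\ge\lambda_1\sqrt{1-K}$ forces $1/\big(b'(\lambda_1+b')\big)\le 1/(\lambda_1^2\sqrt{1-K})$, while the prefactor $K$ tends to $0$; hence $P(K)\to 0$. Continuity of $P$ on $(0,1)$ follows from the smooth dependence of the integrand on $K$ together with a dominated–convergence argument on compact subintervals. The crucial and hardest point is the limit $K\to 1^-$: there $b'$ degenerates to $0$ at the endpoints $b=0$ and $b=\pi$, where $\lambda_1^2\cos^2 b+\lambda_2^2\sin^2 b$ attains its maximum $\lambda_1^2$. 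Writing $\lambda_1^2-K(\lambda_1^2\cos^2 b+\lambda_2^2\sin^2 b)\approx\lambda_1^2(1-K)+K(\lambda_1^2-\lambda_2^2)b^2$ near $b=0$, the local contribution behaves like $\int_0\ud b/\sqrt{\lambda_1^2(1-K)+K(\lambda_1^2-\lambda_2^2)b^2}$, which diverges logarithmically as $K\to 1^-$ (and like $(1-K)^{-1/2}$ in the flat case $\lambda_1=\lambda_2$). Consequently $P(K)\to+\infty$.

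With $P$ continuous on $(0,1)$, $P(0^+)=0$ and $P(1^-)=+\infty$, the intermediate value theorem provides, for every $T>0$, some $K\in(0,1)$ with $P(K)=T$; this is the sought helicoid of period $T$. For $T<0$ I would reduce to the previous case by symmetry. By Definition \ref{def 2.1} the inverse of $(0,0,|T|)$ is $(0,0,-|T|)$, so any surface invariant under $(0,0,|T|)$ is automatically invariant under $(0,0,-|T|)=(0,0,T)$; equivalently, the map $(x,y,z)\mapsto(x,-y,-z)$ is an isometry of $\widetilde{E(2)}$ conjugating left translation by $(0,0,T)$ to left translation by $(0,0,-T)$. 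Hence the helicoid realizing $|T|$ also has period $T$, which completes the argument.

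I expect the main obstacle to be the $K\to 1^-$ analysis: one must control the integrable singularity of $1/\big(b'(\lambda_1+b')\big)$ near $b=0,\pi$ uniformly in $K$ so as to deduce rigorously that $P(K)\to+\infty$ (splitting the integral into a fixed neighbourhood of the endpoints and a compact middle part, and estimating each). If in addition one wants the sharper statement that $K\mapsto P(K)$ is a bijection of $(0,1)$ onto $(0,+\infty)$, one would further need strict monotonicity, which would require differentiating under the integral sign and is likely to be the most delicate computation.
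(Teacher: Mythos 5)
Your proposal is correct and follows essentially the same route as the paper: both convert the period $2x_3(W)$ into an integral over $[0,\pi]$ via the substitution $w=b(v)$, show it tends to $0$ as $K\to 0^+$ and to $+\infty$ as $K\to 1^-$, and handle negative $T$ by the inverse-element symmetry. The only differences are technical — the paper gets strict monotonicity of $K\mapsto {x_3}_K(W)$ by differentiating under the integral sign (hence a bijection onto $(0,+\infty)$) and obtains the divergence by evaluating the integral at $K=1$ with the substitution $t=\tan(w/2)$, whereas you use continuity plus the intermediate value theorem and a local analysis of the endpoint singularity of $1/\bigl(b'(\lambda_1+b')\bigr)$, both of which suffice for the statement.
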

\begin{proof}
The period of $\mathfrak{H}_K$ is 
\begin{align*}
    & 2{x_3}_K(W) = 2\mathlarger{\int}_{0}^{W} \frac{\lambda_1\lambda_2K}{\lambda_1 + b'(v)}\,{\ud}v\\
    = & \mathlarger{\int}_{0}^{\pi} \frac{2\lambda_1\lambda_2K{\ud}w}{\sqrt{\lambda_1^2 - K(\lambda_1^2\cos^2{w} + \lambda_2^2\sin^2{w})}\left(\lambda_1 + \sqrt{\lambda_1^2 - K(\lambda_1^2\cos^2{w} + \lambda_2^2\sin^2{w})}\right)}.
\end{align*}
Let us regard ${x_3}_K$ as a function of $K$, then we get
\begin{align*}
    \frac{\partial{x_3}_K(W)}{\partial K} = \frac{\lambda_1^2\lambda_2}{2}\mathlarger{\int}_{0}^{\pi} \frac{{\ud}w}{\sqrt{\lambda_1^2 - K(\lambda_1^2\cos^2{w} + \lambda_2^2\sin^2{w})}}>0,
\end{align*}
hence the function $K\mapsto{x_3}_K(W)$ is strictly increasing. In addition, we have ${x_3}_0(W)=0$ and
\begin{align*}
    {x_3}_1 & = \lambda_1\lambda_2\mathlarger{\int}_{0}^{\pi}\frac{{\ud}w}{\sqrt{\lambda_1^2-\lambda_2^2}\sin{w}(\sqrt{\lambda_1^2-\lambda_2^2}\sin{w}+\lambda_1)}\\
    & = \frac{\lambda_2}{\sqrt{\lambda_1^2-\lambda_2^2}}\mathlarger{\int}_{0}^{\infty}\frac{1+t^2}{t\left(t^2 + 2\frac{\sqrt{\lambda_1^2-\lambda_2^2}}{\lambda_1}t + 1\right)}\,{\ud}t\\
    &\geq \frac{\lambda_2}{\sqrt{\lambda_1^2-\lambda_2^2}}\mathlarger{\int}_{0}^{\infty}\frac{t^2 + 1}{t(t+1)^2}\,{\ud}t = +\infty,
\end{align*}
thus ${x_3}_K(W)$ is a bijection from $(0,1)$ to $(0,+\infty)$. Since the inverse element of $(0,0,T)$ is $(0,0,-T)$, every non-zero real number $T$ could be the period for some $\mathfrak{H}_K$.
\end{proof}

Now, we use Theorem \ref{thm 3.1} again to compute the metric induced by this minimal immersion $X$, which is
\begin{align}\label{4.9}
    {\ud}s^2 = \frac{K^2\cosh^2{(-\lambda_1u)}}{(\lambda_1+b'(v))^2}|\mathrm{d}z|^2.
\end{align}
With the help of this formula, we are able to investigate some curvature properties of $\mathfrak{H}_K$. As the first application, we obtain an explicit expression for the Gauss curevature $\mathcal{K}$:
\begin{align*}
    \mathcal{K} = & - \frac{\lambda_1^2(\lambda_1+b'(v))^2}{K^2\cosh^4{(-\lambda_1u)}} - \frac{(\lambda_1^2-\lambda_2^2)^2\sin^2{2b(v)}}{4\cosh^2{(-\lambda_1u)}} - \frac{\lambda_1(\lambda_1^2-\lambda_2^2)\cos{2b(v)}(b'(v)+\lambda_1)}{K\cosh^2{(-\lambda_1u)}}\\
    & - \frac{(\lambda_1^2-\lambda_2^2)\left(\lambda_1^2\cos^2{b(v)} + \lambda_2^2\sin^2{b(v)}\right)\cos{2b(v)}}{\cosh^2{(-\lambda_1u)}}.
\end{align*}
Secondly, it should be interesting to study the total absolute curvature of the fundamental piece $\Omega$ of $\mathfrak{H}_K$. If we denote by ${\ud}A$ the area element of $\mathfrak{H}_K$, then 
\begin{align*}
    |\mathcal{K}|{\ud}A = \left|\frac{\lambda_1^2}{\cosh^2{(-\lambda_1u)}} - \frac{K(\lambda_1^2-\lambda_2^2)b'(v)\cos{2b(v)}}{\lambda_1+b'(v)} + \frac{K^2(\lambda_1^2-\lambda_2^2)^2\sin^2{2b(v)}}{4(\lambda_1+b'(v))^2}\right|{\ud}u \mathrm{d}v.
\end{align*}
(1). In the case when $\lambda_1=\lambda_2=1$, we can see that 
\begin{align*}
    \int_{\Omega}|\mathcal{K}|{\ud}A = 4W < +\infty.
\end{align*}
In fact, the constant $W$ could be determined to be $\frac{\pi}{\sqrt{1-K}}$, thus the finite total absolute curvature is $\frac{4\pi}{\sqrt{1-K}}$. It is worth mentioning that this case is isometric to the classic helicoids in $\mathbb{R}^3$.\\
(2). When $\lambda_1 > \lambda_2 > 0$, we have 
\begin{align*}
    \int_{\Omega}|\mathcal{K}|{\ud}A = +\infty.
\end{align*}

\section{Minimal annuli in $\widetilde{E(2)}$}

In this section, we plan to construct minimal annuli in $\widetilde{E(2)}$. As in the previous section, we begin with defining a mapping $g:\mathbb{C}\rightarrow\overline{\mathbb{C}}$ by 
\begin{align*}
    g(z = u+iv) = e^{f(u)+cv}e^{i\varphi(u)},
\end{align*}
where $c$ is a positive real number. Here the function $\varphi$ satisfies the ODE
\begin{align*}
     \varphi'^2(u) = & c^2 + 2\cos{\theta}\left(\lambda_1^2\cos^2{\varphi(u)} + \lambda_2^2\sin^2{\varphi(u)}\right)\\
     & - \frac{\sin^2{\theta}}{c^2}\left(\lambda_1^2\cos^2{\varphi(u)} + \lambda_2^2\sin^2{\varphi(u)}\right)^2,
\end{align*}
with $\theta\in\mathbb{R}$ and the initial condition $\varphi(0)= 0$. The other function $f$ is defined by the ODE
\begin{align*}
    f'(u) = \frac{\sin{\theta}}{c}\left(\lambda_1^2\cos^2{\varphi(u)} + \lambda_2^2\sin^2{\varphi(u)}\right), \quad f(0)=0.
\end{align*}
In the sequel, we will set 
\begin{align*}
    A = f(u) +cv, \quad B = \lambda_1^2\cos^2{\varphi(u)} + \lambda_2^2\sin^2{\varphi(u)}, \quad D = \frac{\sin{\theta}}{c},
\end{align*}
then the two ODEs above could be rewritten as
\begin{align}\label{5.1}
    \varphi'^2(u) = c^2 + 2\cos{\theta}B -D^2B^2, \quad \varphi(0)=0,
\end{align}
and 
\begin{align}\label{5.2}
    f'(u) = DB, \quad f(0)=0.
\end{align}
After a straightforward computation, we obtain
\begin{align}\label{5.3}
    Bf''(u) = -2(\lambda_1^2-\lambda_2^2)f'(u)\varphi'(u)\sin{\varphi(u)}\cos{\varphi(u)},
\end{align}
and 
\begin{align}\label{5.4}
    B\varphi''(u) = (\lambda_1^2-\lambda_2^2)\left(f'^2(u) - \varphi'^2(u) +c^2 \right)\sin{\varphi(u)}\cos{\varphi(u)}.
\end{align}
It is easy to see that $\lambda_2^2 \leq B \leq \lambda_1^2$. Let us denote $\theta_{c}^{+} = \pi$ if $c>\sqrt{2}\lambda_1$ and $\theta_{c}^{+} = \arccos{\left(1-\frac{c^2}{\lambda_1^2}\right)}\in(0,\pi]$ if $0<c\leq \sqrt{2}\lambda_1$. We also define 
\begin{align*}
    \Omega = \{(c,\theta)\in \mathbb{R}^2\mid c>0, \theta\in(-\theta_{c}^{+}, \theta_{c}^{+})\}.
\end{align*}
\begin{lemma}\label{lem 5.1}
For any $(c,\theta)\in\Omega$ and $B\in[\lambda_2^2,\lambda_1^2]$, we always have 
\begin{align*}
    c^2 + 2\cos{\theta}B -D^2B^2 > 0.
\end{align*}
\end{lemma}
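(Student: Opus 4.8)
The plan is to regard the left-hand side as a function of the single variable $B$ and exploit concavity. Write $P(B) = c^2 + 2\cos\theta\, B - D^2 B^2$ with $D^2 = \sin^2\theta/c^2$. Since this expression depends on $\theta$ only through $\cos\theta$ and $\sin^2\theta$, both even in $\theta$, and since $\Omega$ is symmetric under $\theta\mapsto-\theta$, I may assume $\theta\in[0,\theta_{c}^{+})$. The leading coefficient of $P$ in $B$ is $-D^2 = -\sin^2\theta/c^2 \le 0$, so $P$ is concave (affine in the degenerate case $\sin\theta = 0$); hence on the compact interval $[\lambda_2^2,\lambda_1^2]$ its minimum is attained at one of the two endpoints. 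It therefore suffices to prove the two inequalities $P(\lambda_1^2) > 0$ and $P(\lambda_2^2) > 0$.

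For each endpoint $B = \lambda_i^2$ ($i = 1,2$) I would multiply by $c^2 > 0$ and set $x = \cos\theta$, using $\sin^2\theta = 1 - x^2$, to rewrite $c^2 P(\lambda_i^2)$ as the upward-opening quadratic $\lambda_i^4 x^2 + 2 c^2\lambda_i^2 x + (c^4 - \lambda_i^4)$ in $x$. A short discriminant computation gives discriminant $4\lambda_i^8$, whence the two roots $x = 1 - c^2/\lambda_i^2$ and $x = -1 - c^2/\lambda_i^2$. Being an upward parabola, this quantity is strictly positive precisely when $x$ lies strictly above the larger root $1 - c^2/\lambda_i^2$, the lower branch $x < -1 - c^2/\lambda_i^2$ being irrelevant since $x = \cos\theta \ge -1$.

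The key point is that the definition of $\theta_{c}^{+}$ is engineered exactly so that the endpoint $B = \lambda_1^2$ is the binding one. When $0 < c \le \sqrt{2}\lambda_1$ one has $\cos\theta_{c}^{+} = 1 - c^2/\lambda_1^2$, so the constraint $\theta < \theta_{c}^{+}$ together with monotonicity of cosine on $[0,\pi]$ gives $\cos\theta > 1 - c^2/\lambda_1^2$, which is exactly $P(\lambda_1^2) > 0$. When $c > \sqrt{2}\lambda_1$ the root $1 - c^2/\lambda_1^2$ drops below $-1$, so $\cos\theta \ge -1 > 1 - c^2/\lambda_1^2$ makes $P(\lambda_1^2) > 0$ automatic, consistent with $\theta_{c}^{+} = \pi$. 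Finally, the inequality at $B = \lambda_2^2$ follows a fortiori from $\lambda_2 \le \lambda_1$: its larger root satisfies $1 - c^2/\lambda_2^2 \le 1 - c^2/\lambda_1^2 < \cos\theta$, so $P(\lambda_2^2) > 0$ as well.

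The individual computations are routine; the only genuine points requiring care are the reduction to the two endpoints via concavity in $B$ and the observation that the threshold defining $\Omega$ matches precisely the endpoint condition at $B = \lambda_1^2$, with the condition at $B = \lambda_2^2$ then automatic. I would also dispatch separately the degenerate affine case $\sin\theta = 0$ (that is, $\theta = 0$), where $P(B) = c^2 + 2B > 0$ holds trivially.
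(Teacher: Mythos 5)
Your proof is correct, but it is organized differently from the paper's. The paper does not reduce to endpoints: after disposing of the degenerate case $\sin\theta=0$ exactly as you do, it factors the quadratic in $B$ as $c^2+2\cos\theta\,B-D^2B^2=-D^2\left(B-\frac{c^2}{1-\cos\theta}\right)\left(B+\frac{c^2}{1+\cos\theta}\right)$ and checks that the definition of $\Omega$ forces $\frac{c^2}{1-\cos\theta}>\lambda_1^2$ and $-\frac{c^2}{1+\cos\theta}<0<\lambda_2^2$, so the two factors keep fixed (opposite) signs on all of $[\lambda_2^2,\lambda_1^2]$ at once. Your argument transposes this: concavity in $B$ reduces the claim to the endpoints $B=\lambda_i^2$, where you locate the roots of the quadratic in $x=\cos\theta$ rather than in $B$; the pivotal conditions in the two proofs are algebraically the same inequality, since $\frac{c^2}{1-\cos\theta}>\lambda_1^2$ is equivalent to $\cos\theta>1-\frac{c^2}{\lambda_1^2}$. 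Each route has a small advantage. The paper's factorization does double duty later in the text: the quantities $\frac{c^2}{1-\cos\theta}-\lambda_1^2$ and $\lambda_2^2+\frac{c^2}{1+\cos\theta}$ reappear in the uniform bounds $a_1\le\varphi'\le a_2$ invoked in the proof of Proposition \ref{prop 5.1} and in the estimates inside the proof of Lemma \ref{lem 5.2}. Your version isolates more cleanly why $\theta_c^+$ is the natural threshold: it is exactly the value of $\theta$ at which $P(\lambda_1^2)$ vanishes, so the constraint at $B=\lambda_1^2$ is the binding one and the constraint at $B=\lambda_2^2$ follows a fortiori from $\lambda_2\le\lambda_1$, a point the paper leaves implicit.
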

\begin{proof}
(1). If $\theta\in\pi\mathbb{Z}$, then $\sin{\theta}=0$. In addition, since $(c,\theta)\in\Omega$, we must have $\cos{\theta}=1$, whence 
\begin{align*}
    c^2 + 2\cos{\theta}B -D^2B^2 = c^2 +2B>0.
\end{align*}
(2). In the case when $\theta\notin\pi\mathbb{Z}$, we have $\frac{\sin^2{\theta}}{c^2}>0$ and
\begin{align*}
    c^2 + 2\cos{\theta}B -D^2B^2 = -D^2\left(B-\frac{c^2}{1-\cos{\theta}}\right)\left(B+\frac{c^2}{1+\cos{\theta}}\right).
\end{align*}
The condition $(c,\theta)\in\Omega$ implies that 
\begin{align*}
    \frac{c^2}{1-\cos{\theta}} > \lambda_1^2, \quad -\frac{c^2}{1+\cos{\theta}} < 0 < \lambda_2^2.
\end{align*}
Thus when $B\in[\lambda_2^2,\lambda_1^2]$, we always have 
\begin{align*}
    c^2 + 2\cos{\theta}B -D^2B^2 >0.
\end{align*}
\end{proof}

\begin{proposition}\label{prop 5.1}
Let $(c,\theta)\in\Omega$ and $\varphi$ be the solution of the ODE
\begin{align}\label{5.5}
    \varphi'(u) = \sqrt{c^2 + 2\cos{\theta}B -D^2B^2}, \quad \varphi(0)=0.
\end{align}
Then \\
(1). The function $\varphi$ is a well-defined increasing bijection from $\mathbb{R}$ to $\mathbb{R}$.\\
(2). The function $\varphi$ is an odd function.\\
(3). There exists a positive real number $U$ such that 
\begin{align*}
    \varphi(u+U) = \varphi(u) + \pi, \quad \forall u\in\mathbb{R}.
\end{align*}
(4). $\varphi(kU) = k\pi$ for all $k\in\mathbb{Z}$.\\
(5). $\varphi(k\frac{U}{2}) = k\frac{\pi}{2}$ for all $k\in2\mathbb{Z}+1$.
\end{proposition}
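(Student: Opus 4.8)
The plan is to mimic the structure of Proposition \ref{prop 4.1}, since the governing ODE (\ref{5.5}) for $\varphi$ plays exactly the role that ODE (\ref{4.1}) played for $b$. The key observation is that the right-hand side of (\ref{5.5}) depends on $u$ only through $\varphi(u)$ via the quantity $B = \lambda_1^2\cos^2{\varphi} + \lambda_2^2\sin^2{\varphi}$, which is itself a smooth $\pi$-periodic function of $\varphi$. This autonomous, $\pi$-periodic-in-$\varphi$ structure is what drives all five conclusions.

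First I would establish (1). By Lemma \ref{lem 5.1}, for $(c,\theta)\in\Omega$ the expression under the square root is strictly positive for every $B\in[\lambda_2^2,\lambda_1^2]$; since it is a continuous function of $B$ on this compact interval, it is bounded below by some $r>0$ and above by some constant. Hence along any solution we have $0 < \sqrt{r} \leq \varphi'(u) \leq M$ for fixed constants, so the right-hand side of (\ref{5.5}) is a bounded, locally Lipschitz function of $\varphi$. The Cauchy--Lipschitz theorem then gives a unique solution, and the uniform bound $\varphi' \geq \sqrt{r} > 0$ forces $\varphi$ to be defined on all of $\mathbb{R}$ with $\lim_{u\to\pm\infty}\varphi(u) = \pm\infty$, so $\varphi$ is an increasing diffeomorphism of $\mathbb{R}$.

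The remaining parts are symmetry arguments identical in spirit to those for $b$. For (2), set $\hat{\varphi}(u) := -\varphi(-u)$; since $B$ depends on $\varphi$ only through $\cos^2\varphi$ and $\sin^2\varphi$, it is invariant under $\varphi \mapsto -\varphi$, so $\hat{\varphi}$ satisfies the same ODE (\ref{5.5}) with $\hat{\varphi}(0)=0$, and uniqueness gives $\hat{\varphi}=\varphi$, i.e.\ $\varphi$ is odd. For (3), part (1) guarantees a unique $U>0$ with $\varphi(U)=\pi$; then $\tilde{\varphi}(u):=\varphi(u+U)-\pi$ again satisfies (\ref{5.5}) with $\tilde{\varphi}(0)=0$, because $B$ is $\pi$-periodic in $\varphi$ and hence unchanged by the shift $\varphi \mapsto \varphi+\pi$, and uniqueness yields $\tilde{\varphi}=\varphi$. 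Part (4) follows by iterating (3), and part (5) follows by combining oddness with (3): evaluating $\varphi(\tfrac{U}{2}) = \varphi(-\tfrac{U}{2}+U) = -\varphi(\tfrac{U}{2})+\pi$ gives $\varphi(\tfrac{U}{2})=\tfrac{\pi}{2}$, and the general odd multiple is deduced from (3).

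The only genuinely substantive point is part (1), and within it the real content is the positivity and boundedness of the radicand, which is precisely what Lemma \ref{lem 5.1} supplies; everything downstream is a routine invocation of existence-uniqueness plus the two invariances (evenness of $B$ in $\varphi$ and $\pi$-periodicity of $B$ in $\varphi$). I therefore expect no serious obstacle: the main care is simply to verify that these two invariances of $B$ hold, which is immediate from its defining formula, so that the symmetry substitutions land back on the same initial-value problem and uniqueness can be applied.
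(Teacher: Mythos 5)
Your proposal is correct and follows essentially the same route as the paper's own proof: bound the radicand away from zero via Lemma \ref{lem 5.1} to get $0<a_1\leq\varphi'\leq a_2$ and apply Cauchy--Lipschitz for (1), then use the substitutions $\hat{\varphi}(u)=-\varphi(-u)$ and $\tilde{\varphi}(u)=\varphi(u+U)-\pi$ together with uniqueness for (2)--(5). Your explicit verification that $B$ is even and $\pi$-periodic in $\varphi$ (which justifies those substitutions) is slightly more detailed than the paper's ``one can check,'' but it is the same argument.
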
 
\begin{proof}
(1). By the proof of Lemma \ref{lem 5.1}, we know that for $(c,\theta)\in\Omega$, there exist two real numbers $a_1, a_2>0$ such that $a_1\leq \varphi'(u)\leq a_2$. Thus the Cauchy-Lipschitz theorem tells us that $\varphi$ is well-defined on $\mathbb{R}$. Moreover, since $\lim\limits_{u\to+\infty} \varphi(u) = +\infty$ and $\lim\limits_{u\to-\infty} \varphi(u) = -\infty$, the function $\varphi$ is an increasing bijection from $\mathbb{R}$ to $\mathbb{R}$.\\
(2). The function defined by $\hat{\varphi}:=-\varphi(-u)$ satisfies ODE (\ref{5.5}) and the initial condition $\hat{\varphi}(0)=0$. This implies $\hat{\varphi}=\varphi$, whence $\varphi$ is odd.\\
(3). There exists a positive real number $U$ such that $\varphi(U)=\pi$ because $\varphi$ is an increasing function from $\mathbb{R}$ to $\mathbb{R}$. We define the function $\Tilde{\varphi}(u):=\varphi(u+U)-\pi$. One can check that $\Tilde{\varphi}$ also satisfies ODE (\ref{5.5}) and the initial condition $\Tilde{\varphi}(0)=0$, so $\Tilde{\varphi}=\varphi$.\\
(4). It is immediate.\\
(5). $\varphi(\frac{U}{2}) = \varphi(-\frac{U}{2} + U) = -\varphi(\frac{U}{2}) + \pi$, hence $\varphi(\frac{U}{2}) = \frac{\pi}{2}$. The rest follows easily from (3).
\end{proof}

\begin{proposition}\label{prop 5.2}
Let $(c,\theta)\in\Omega$ and $f$ be the function defined by ODE (\ref{5.2}), then\\
(1). $f$ is an odd function.\\
(2). $f(u+U) = f(u) + \pi$ for all $u\in\mathbb{R}$.\\
(3). $f(kU) = k\pi$ for all $k\in\mathbb{Z}$.\\
(4). $f(k\frac{U}{2}) = k\frac{\pi}{2}$ for all $k\in2\mathbb{Z}+1$.
\end{proposition}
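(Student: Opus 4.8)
The plan is to prove all four properties in parallel with Proposition \ref{prop 5.1}, reading everything off from the defining ODE (\ref{5.2}) together with the symmetries of $B=\lambda_1^2\cos^2\varphi(u)+\lambda_2^2\sin^2\varphi(u)$ that are already forced by the behaviour of $\varphi$. The crucial preliminary step is to record two facts about $B$ viewed as a function of $u$: it is \emph{even}, since $\varphi$ is odd by Proposition \ref{prop 5.1}(2) and $B$ depends on $\varphi$ only through $\cos^2\varphi$ and $\sin^2\varphi$; and it is \emph{$U$-periodic}, since $\varphi(u+U)=\varphi(u)+\pi$ by Proposition \ref{prop 5.1}(3) while $\cos^2$ and $\sin^2$ are $\pi$-periodic. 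Neither reduction uses anything beyond the already established properties of $\varphi$.

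For part (1), writing $f(u)=\int_0^u DB\,\ud s$ with $D=\frac{\sin\theta}{c}$ constant, the substitution $s\mapsto -s$ together with the evenness of $B$ gives $f(-u)=-\int_0^u DB\,\ud s=-f(u)$, so $f$ is odd. For part (2), the $U$-periodicity of $B$ yields
\[
 f(u+U)-f(u)=\int_u^{u+U} DB\,\ud s=\int_0^U DB\,\ud s,
\]
which is independent of $u$; thus $f$ increases by one fixed amount over each period $U$. Parts (3) and (4) are then purely formal consequences of (1) and (2), exactly as in Proposition \ref{prop 5.1}: iterating (2) evaluates $f$ at the integer multiples $kU$, while combining oddness with (2) at the half-period, $f\!\left(\frac{U}{2}\right)=f\!\left(-\frac{U}{2}+U\right)=-f\!\left(\frac{U}{2}\right)+\big(f(u+U)-f(u)\big)$, pins down $f$ at the odd multiples of $\frac{U}{2}$.

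The only genuinely non-formal point, and the step I expect to be the main obstacle, is the identification of the increment $\int_0^U DB\,\ud u$. I would evaluate it through the change of variable $w=\varphi(u)$, which by (\ref{5.5}) turns it into $\int_0^\pi \frac{DB}{\sqrt{c^2+2\cos\theta\,B-D^2B^2}}\,\ud w$ with $B=\lambda_1^2\cos^2 w+\lambda_2^2\sin^2 w$, using Lemma \ref{lem 5.1} for positivity of the denominator. This integral measures exactly how the modulus $e^{f(u)+cv}$ of the Gauss map grows over one angular period in $u$, and so lies at the heart of the period problem announced in the introduction; I would therefore determine its value explicitly, and match it against the claimed increment, rather than take the value for granted before turning to the closing-up of the annulus.
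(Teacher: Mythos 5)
Your argument for (1) and for the additive form of (2) is correct and is essentially the paper's own proof: the paper notes that $f'$ is even (hence $f$ is odd, given $f(0)=0$) and then declares that the remaining parts go "exactly as in Proposition \ref{prop 5.1}", i.e.\ one compares $f(u+U)$ with $f(u)$ shifted by a constant, using precisely the $U$-periodicity and evenness of $B$ that you isolated; your integral computation $f(u+U)-f(u)=\int_0^U DB\,\ud s$ and your formal derivation of (3) and (4) from (1) and (2) are the same argument in integrated form.

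The step you refused to take for granted is exactly where the printed statement errs, and you should not attempt it: the increment $\int_0^U DB\,\ud u = f(U)$ is \emph{not} equal to $\pi$. For $\theta=0$, which belongs to $\Omega$ for every $c>0$, one has $D=0$, hence $f\equiv 0$ and $f(u+U)=f(u)$; by continuity $f(U)$ remains small (not $\pi$) for all small $\theta$, and even in the case $\lambda_1=\lambda_2=1$ with $\theta=\widetilde{\theta_c}$ one gets $f(U)=D\pi/\sqrt{c^2+2\cos\theta-D^2}\neq\pi$. The correct statement replaces $\pi$ by $f(U)$ throughout: $f(u+U)=f(u)+f(U)$, $f(kU)=kf(U)$, $f\bigl(k\tfrac{U}{2}\bigr)=k\tfrac{f(U)}{2}$, in exact parallel with Proposition \ref{prop 5.4} (which is stated with $G(U)$, not $\pi$), and this corrected form is all the paper ever uses (the proof of Theorem \ref{thm 5.2} invokes $f(u+2U)=f(u)+2f(U)$). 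Note also that the paper's own proof scheme, transported from Proposition \ref{prop 5.1}, only yields the corrected version: the comparison function $\widetilde{f}(u):=f(u+U)-\pi$ does satisfy ODE (\ref{5.2}), but its initial condition is $\widetilde{f}(0)=f(U)-\pi\neq 0$, so the uniqueness argument fails unless one replaces $\pi$ by $f(U)$. In short, your proof with the increment left as $f(U)$ is complete and proves everything that is needed; the residual "obstacle" you flagged is a misprint in the proposition, not a gap in your argument.
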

\begin{proof}
(1). It is clear from (\ref{5.2}) that $f'$ is an even function. The condition $f(0)=0$ implies that $f$ is odd. The proof of the rest part is exactly the same with Proposition \ref{prop 5.1}.
\end{proof}

\begin{proposition}\label{prop 5.3}
The mapping $g$ satisfies
\begin{align*}
    g_{z\Bar{z}} = \frac{2\left[ \lambda_1^2(g + \Bar{g}) - \lambda_2^2(g - \Bar{g})\right]}{\lambda_1^2(g + \Bar{g})^2 - \lambda_2^2(g - \Bar{g})^2}g_{z}g_{\Bar{z}},
\end{align*}
and its Hopf differential is $Q = \frac{1}{8}e^{-i\theta}dz^2$.
\end{proposition}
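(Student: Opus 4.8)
The plan is to verify both assertions by direct computation from the explicit form of $g$, in exactly the same spirit as Proposition \ref{prop 4.2}. Writing $\log g = f(u) + cv + i\varphi(u)$ and using $\partial_z = \tfrac{1}{2}(\partial_u - i\partial_v)$, $\partial_{\bar z} = \tfrac{1}{2}(\partial_u + i\partial_v)$, I would first record the Wirtinger derivatives
\[
g_z = \tfrac{1}{2}\big(f' + i(\varphi' - c)\big)g, \quad g_{\bar z} = \tfrac{1}{2}\big(f' + i(\varphi' + c)\big)g, \quad \bar g_z = \tfrac{1}{2}\big(f' - i(\varphi' + c)\big)\bar g.
\]
Since $f'$ and $\varphi'$ depend only on $u$ and $c$ is constant, differentiating $g_z$ once more gives
\[
g_{z\bar z} = \tfrac{1}{4}\Big[(f'' + i\varphi'') + \big(f' + i(\varphi' - c)\big)\big(f' + i(\varphi' + c)\big)\Big]g.
\]

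Next I would rewrite the right-hand side of the PDE using $g + \bar g = 2e^A\cos\varphi$ and $g - \bar g = 2ie^A\sin\varphi$ (recall $A = f + cv$), which turn the denominator $\lambda_1^2(g+\bar g)^2 - \lambda_2^2(g-\bar g)^2$ into $4e^{2A}B$ and let me cancel the common factor $\tfrac14 e^A e^{i\varphi}$ from both sides. The key simplification is the identity $(\lambda_1^2\cos\varphi - i\lambda_2^2\sin\varphi)e^{i\varphi} = B + i(\lambda_1^2 - \lambda_2^2)\sin\varphi\cos\varphi$. After this cancellation, separating the resulting equation into its real and imaginary parts yields precisely the two relations $Bf'' = -2(\lambda_1^2-\lambda_2^2)f'\varphi'\sin\varphi\cos\varphi$ and $B\varphi'' = (\lambda_1^2-\lambda_2^2)(f'^2 - \varphi'^2 + c^2)\sin\varphi\cos\varphi$, which are exactly equations (\ref{5.3}) and (\ref{5.4}). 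Since those identities are already established, the PDE follows.

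For the Hopf differential, I would substitute $g_z\bar g_z = \tfrac{1}{4}(f'^2 + \varphi'^2 - c^2 - 2icf')e^{2A}$ together with the denominator $4e^{2A}B$ into (\ref{3.7}), obtaining $Q = \tfrac{f'^2 + \varphi'^2 - c^2 - 2icf'}{16B}\,dz^2$. Matching this against $\tfrac{1}{8}e^{-i\theta}dz^2$ amounts to showing $f'^2 + \varphi'^2 - c^2 - 2icf' = 2B(\cos\theta - i\sin\theta)$. The imaginary part reduces to $cf' = B\sin\theta$, which is exactly $f' = DB$ from (\ref{5.2}); substituting $f' = DB$ and $\varphi'^2 = c^2 + 2\cos\theta\,B - D^2B^2$ from (\ref{5.1}) into the real part gives $f'^2 + \varphi'^2 - c^2 = 2B\cos\theta$, which closes the argument.

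I do not expect any genuine obstacle here: the computation is routine once the defining ODEs (\ref{5.1})--(\ref{5.4}) are invoked. The only points requiring care are the bookkeeping in separating real and imaginary parts so that the PDE collapses cleanly onto (\ref{5.3}) and (\ref{5.4}), and correctly tracking the extra $cv$-dependence in $A$ (absent in the helicoid case of Proposition \ref{prop 4.2}) when computing the Wirtinger derivatives and hence $g_{\bar z}$ versus $g_z$.
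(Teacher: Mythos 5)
Your proposal is correct and follows essentially the same route as the paper: compute the Wirtinger derivatives $g_z$, $g_{\bar z}$, $\bar g_z$, $g_{z\bar z}$ explicitly, verify that the PDE reduces (after cancelling the common factor and separating real and imaginary parts) to the already-established identities (\ref{5.3}) and (\ref{5.4}), and obtain the Hopf differential from (\ref{3.7}) via the defining ODEs (\ref{5.1}) and (\ref{5.2}). Your write-up simply makes explicit the cancellations and the real/imaginary bookkeeping that the paper's proof leaves as a direct check.
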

\begin{proof}
We compute
\begin{align*}
    g_{z} = \frac{1}{2}e^{f(u)+cv}e^{i\varphi(u)}\left(f'(u)+i\varphi'(u)-ic\right),
\end{align*}
\begin{align*}
   g_{\Bar{z}} = \frac{1}{2}e^{f(u)+cv}e^{i\varphi(u)}\left(f'(u)+i\varphi'(u)+ic\right),
\end{align*}
\begin{align*}
    \Bar{g}_{z} = \frac{1}{2}e^{f(u)+cv}e^{-i\varphi(u)}\left(f'(u)-i\varphi'(u)-ic\right),
\end{align*}
\begin{align*}
   g_{z\Bar{z}} = \frac{1}{4}e^{f(u)+cv}e^{i\varphi(u)}\left[f'^2(u)-\varphi'^2(u)+f''(u)+c^2+i\varphi''(u)+2if'(u)\varphi'(u)\right]. 
\end{align*} 
Then one can check directly that the equation is satisfied. Together with (\ref{5.3}) and (\ref{5.4}), we can get the expression of $Q$.
\end{proof}

Now we want to use the Weierstrass representation again to construct a conformal minimal immersion  $X(u+iv) = (x_1, x_2, x_3)$ into $\widetilde{E(2)}$ with this prescribed Gauss map $g$. Firstly, we compute the $0$-potential which is \begin{align*}
    R(g) = -2ie^{2f(u)+2c}B.
\end{align*}
Then we get 
\begin{align*}
    \eta = \frac{4\Bar{g}g_{z}}{R(g)} = \frac{1}{B}\left((c-\varphi'(u)) + if'(u)\right).
\end{align*}
Secondly, let us write $X_{z} = \sum_{i=1}^{3}{x_i}_{z} \partial_{x_i} = \sum_{i=1}^{3}A_i E_i$, then Theorem \ref{thm 3.1} tells us that
\begin{align*}
    A_1 = & \frac{1}{2B}\left\{\left[(c-\varphi'(u))\sinh{A}\cos{\varphi(u)} +f'(u)\cosh{A}\sin{\varphi(u)}\right]\right.\\  
    &  \left. +i\left[f'(u)\sinh{A}\cos{\varphi(u)}-(c-\varphi'(u))\cosh{A}\sin{\varphi(u)}\right]\right\},
\end{align*}
\begin{align*}
    A_2 = & \frac{1}{2B}\left\{\left[(c-\varphi'(u))\sinh{A}\sin{\varphi(u)} -f'(u)\cosh{A}\cos{\varphi(u)}\right]\right.\\ 
    &\left. +i\left[f'(u)\sinh{A}\sin{\varphi(u)}+(c-\varphi'(u))\cosh{A}\cos{\varphi(u)}\right]\right\},
\end{align*}
\begin{align*}
    A_3 = \frac{1}{2B}\left[(c-\varphi'(u))+if'(u)\right].
\end{align*}
Comparing the two expressions of $X_z$, we obtain
\begin{equation}\label{5.6}
\left\{
\begin{aligned}
  & ~{x_1}_z = \frac{1}{\lambda_1}A_1\cos{x_3} - \frac{1}{\lambda_2}A_2\sin{x_3},\\
  & ~{x_2}_z = \frac{1}{\lambda_1}A_1\sin{x_3} + \frac{1}{\lambda_2}A_2\cos{x_3},\\
  & ~{x_3}_z = \lambda_1\lambda_2A_3.
\end{aligned}
\right.
\end{equation}
The formula for ${x_3}_z$ is equivalent to 
\begin{equation}\label{5.7}
\left\{
\begin{aligned}
  & ~\frac{\partial x_3}{\partial u} = \lambda_1\lambda_2\frac{D^2B-2\cos{\theta}}{c+\varphi'(u)} = \lambda_1\lambda_2\frac{c-\varphi'(u)}{B},\\
  & ~\frac{\partial x_3}{\partial v} = -\lambda_1\lambda_2D.
\end{aligned}
\right.
\end{equation}
Let us define 
\begin{align}\label{5.8}
    G(u) = \mathlarger{\int}_{0}^{u}\frac{D^2B-2\cos{\theta}}{c+\varphi'(s)}{\ud}s= \mathlarger{\int}_{0}^{u}\frac{c-\varphi'(s)}{B}{\ud}s,
\end{align}
then we have $G(0)=0$ and 
\begin{align}\label{5.9}
    x_3 = -\lambda_1\lambda_2Dv + \lambda_1\lambda_2G(u).
\end{align}

\begin{proposition}\label{prop 5.4}
Let $(c,\theta)\in\Omega$ and $G$ be the function defined by (\ref{5.8}), then\\
(1). $G$ is an odd function.\\
(2). $G(u+U) = G(u) + G(U)$ for all $u\in\mathbb{R}$.\\
(3). $G(kU) = kG(U)$ for all $k\in\mathbb{Z}$.\\
(4). $G(k\frac{U}{2}) = k\frac{G(U)}{2}$ for all $k\in2\mathbb{Z}+1$.
\end{proposition}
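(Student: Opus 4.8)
The plan is to transfer the symmetries of $\varphi$ recorded in Proposition \ref{prop 5.1} to $G$ by elementary changes of variables in the integral (\ref{5.8}). The key preliminary remark is that both $\varphi'$ and $B$, seen as functions of $s$, inherit these symmetries. Since $\varphi$ is odd, $\varphi'$ is even; and because $B$ depends on $\varphi$ only through $\cos^2\varphi$ and $\sin^2\varphi$, the oddness of $\varphi$ makes $B$ even in $s$. In the same way, the quasi-periodicity $\varphi(s+U)=\varphi(s)+\pi$, combined with $\cos^2(x+\pi)=\cos^2 x$ and $\sin^2(x+\pi)=\sin^2 x$, shows that $\varphi'$ and $B$ are in fact $U$-periodic. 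Hence the integrand $(c-\varphi'(s))/B$ of (\ref{5.8}) is an even, $U$-periodic function of $s$.

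With this in hand, part (1) is immediate, since the integral of an even function starting at $0$ is odd. For part (2) I would write $G(u+U)=\int_0^U+\int_U^{u+U}$, recognize the first piece as $G(U)$, and in the second substitute $s\mapsto s+U$, using the $U$-periodicity of the integrand to identify it with $G(u)$; this gives $G(u+U)=G(u)+G(U)$. Part (3) then follows by induction on $k$ for $k\geq 0$, from part (1) for $k<0$, and trivially for $k=0$.

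The only genuinely new ingredient lives in part (4), and I expect the reflection symmetry it needs to be the main obstacle. The claim is $\varphi(U-u)=\pi-\varphi(u)$. To prove it, set $\psi(u):=\pi-\varphi(U-u)$; then $\psi(0)=\pi-\varphi(U)=0$, and since $B$ is invariant under $\varphi\mapsto\pi-\varphi$ while $\psi'(u)=\varphi'(U-u)$, the function $\psi$ satisfies the same Cauchy problem (\ref{5.5}) as $\varphi$. The Cauchy-Lipschitz theorem gives $\psi=\varphi$, which is the reflection identity; differentiating yields $\varphi'(U-u)=\varphi'(u)$, and the identity itself gives $B(U-u)=B(u)$. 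Substituting $s\mapsto U-s$ in $\int_{U/2}^{U}$ then converts it into $\int_0^{U/2}(c-\varphi'(s))/B\,ds=G(U/2)$, whence $G(U)=2\,G(U/2)$, i.e. $G(U/2)=\tfrac12 G(U)$. Finally, for an odd integer $k=2m+1$ I would write $k\tfrac{U}{2}=mU+\tfrac{U}{2}$ and apply part (2) $m$ times to get $G(k\tfrac{U}{2})=mG(U)+\tfrac12 G(U)=\tfrac{k}{2}G(U)$, extending to negative odd $k$ by oddness.
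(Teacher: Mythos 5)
Your proof is correct, and parts (1)--(3) are in substance the paper's own argument: the paper proves Proposition \ref{prop 5.4} by invoking the method of Propositions \ref{prop 5.1} and \ref{prop 5.2}, which amounts exactly to your preliminary observation that the integrand $(c-\varphi'(s))/B$ in (\ref{5.8}) is even and $U$-periodic. Where you genuinely diverge is part (4). You establish the reflection identity $\varphi(U-u)=\pi-\varphi(u)$ via a Cauchy--Lipschitz uniqueness argument (valid, since Lemma \ref{lem 5.1} keeps the quantity under the square root in (\ref{5.5}) positive for all values of the unknown, so the right-hand side is smooth and locally Lipschitz) and then perform the substitution $s\mapsto U-s$ in the integral. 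The paper's route, inherited from the proof of item (5) of Proposition \ref{prop 5.1}, needs no new lemma at all: it evaluates the quasi-periodicity (2) at $u=-U/2$ and uses oddness (1), giving
\begin{align*}
G\left(\tfrac{U}{2}\right) = G\left(-\tfrac{U}{2}+U\right) = G\left(-\tfrac{U}{2}\right)+G(U) = -G\left(\tfrac{U}{2}\right)+G(U),
\end{align*}
whence $G(U/2)=\tfrac{1}{2}G(U)$, after which the passage to general odd $k$ is the same as yours. So your argument is heavier than necessary for this statement, though it does yield a genuine by-product the paper's trick does not: the reflection symmetry $\varphi(U-u)=\pi-\varphi(u)$ (hence $\varphi'$ and $B$ are symmetric about $U/2$), which is a stronger structural fact about the profile functions than the single value $G(U/2)=\tfrac12 G(U)$.
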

\begin{proof}
The method of proof is the same with the one of Proposition \ref{prop 5.2}.
\end{proof}

Let us set
\begin{equation}\label{5.10}
\left\{
\begin{aligned}
  & ~M_1 = c\cos{x_3}\cosh{A} - \lambda_1\lambda_2D\sin{x_3}\sinh{A},\\
  & ~M_2 = c\cos{x_3}\sinh{A} - \lambda_1\lambda_2D\sin{x_3}\cosh{A},\\
  & ~M_3 = c\sin{x_3}\sinh{A} + \lambda_1\lambda_2D\cos{x_3}\cosh{A},\\
  & ~M_4 = c\sin{x_3}\cosh{A} + \lambda_1\lambda_2D\cos{x_3}\sinh{A}.
\end{aligned}
\right.
\end{equation}
The solutions of equations (\ref{5.7}) are 
\begin{align*}
    x_1(u+iv) = & -\frac{1}{(c^2+\lambda_1^2\lambda_2^2D^2)B}\left[\frac{1}{\lambda_1}f'(u)\cos{\varphi(u)}M_1 - \frac{1}{\lambda_1}\left(c-\varphi'(u)\right)\sin{\varphi(u)}M_2\right.\\
    &\left. - \frac{1}{\lambda_2}\left(c-\varphi'(u)\right)\cos{\varphi(u)}M_3 - \frac{1}{\lambda_2}f'(u)\sin{\varphi(u)}M_4\right],
\end{align*}
\begin{align*}
    x_2(u+iv) = & -\frac{1}{(c^2+\lambda_1^2\lambda_2^2D^2)B}\left[\frac{1}{\lambda_1}f'(u)\cos{\varphi(u)}M_4 - \frac{1}{\lambda_1}\left(c-\varphi'(u)\right)\sin{\varphi(u)}M_3\right.\\
    &\left. + \frac{1}{\lambda_2}\left(c-\varphi'(u)\right)\cos{\varphi(u)}M_2 + \frac{1}{\lambda_2}f'(u)\sin{\varphi(u)}M_1\right].
\end{align*}
Consequently, we are able to summarise these results to the following theorem.

\begin{theorem}\label{thm 5.1}
Let $(c,\theta)\in\Omega$, $\varphi$ be the solution of ODE (\ref{5.5}) and $f$ be the solution of ODE (\ref{5.2}). Then the conformal minimal immersion 
\begin{align*}
  X(u+iv) = (x_1, x_2, x_3): \mathbb{C}\rightarrow\widetilde{E(2)}  
\end{align*}
with the Gauss map 
\begin{align*}
    g(z = u+iv) = e^{f(u)+cv}e^{i\varphi(u)},
\end{align*}
is given by
\begin{align*}
    x_1(u+iv) = & -\frac{1}{(c^2+\lambda_1^2\lambda_2^2D^2)B}\left[\frac{1}{\lambda_1}f'(u)\cos{\varphi(u)}M_1 - \frac{1}{\lambda_1}\left(c-\varphi'(u)\right)\sin{\varphi(u)}M_2\right.\\
    &\left. - \frac{1}{\lambda_2}\left(c-\varphi'(u)\right)\cos{\varphi(u)}M_3 - \frac{1}{\lambda_2}f'(u)\sin{\varphi(u)}M_4\right],
\end{align*}
\begin{align*}
    x_2(u+iv) = & -\frac{1}{(c^2+\lambda_1^2\lambda_2^2D^2)B}\left[\frac{1}{\lambda_1}f'(u)\cos{\varphi(u)}M_4 - \frac{1}{\lambda_1}\left(c-\varphi'(u)\right)\sin{\varphi(u)}M_3\right.\\
    &\left. + \frac{1}{\lambda_2}\left(c-\varphi'(u)\right)\cos{\varphi(u)}M_2 + \frac{1}{\lambda_2}f'(u)\sin{\varphi(u)}M_1\right].
\end{align*}
and
\begin{align*}
    x_3(u+iv) = -\lambda_1\lambda_2Dv + \lambda_1\lambda_2G(u),
\end{align*}
where $G$ and $M_1,M_2,M_3,M_4$ are defined as in (\ref{5.8}) and (\ref{5.10}).
\end{theorem}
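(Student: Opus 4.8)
The plan is to read this theorem as the bookkeeping step that packages the construction carried out in the paragraphs preceding it, so the proof splits into an existence assertion and an explicit-formula assertion. For existence I would invoke the converse Weierstrass representation, Theorem \ref{thm 3.2}, applied to the simply connected Riemann surface $\Sigma = \mathbb{C}$ and the candidate Gauss map $g(u+iv) = e^{f(u)+cv}e^{i\varphi(u)}$. Three hypotheses must be verified: that $g$ solves the elliptic PDE (\ref{3.6}), that the $0$-potential $R$ has no zeros on $g(\mathbb{C})$, and that $g$ is nowhere antiholomorphic. The first is exactly Proposition \ref{prop 5.3}, so nothing new is needed there.

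For the nonvanishing of $R$, I would use the computed value $R(g) = -2ie^{2(f(u)+cv)}B$ together with the elementary bound $B = \lambda_1^2\cos^2\varphi + \lambda_2^2\sin^2\varphi \geq \lambda_2^2 > 0$; since the exponential factor never vanishes, $R(g)$ is nowhere zero. For the nowhere-antiholomorphic condition I would show $g_z \neq 0$ everywhere. From the expression $g_z = \frac{1}{2}e^{f+cv}e^{i\varphi}\bigl(f'(u) + i(\varphi'(u)-c)\bigr)$ computed in Proposition \ref{prop 5.3}, vanishing would force both $f'(u)=0$ and $\varphi'(u)=c$. But $f' = DB$ with $B>0$ forces $D=0$, i.e. $\sin\theta=0$; within $\Omega$ this means $\theta = 0$, and then (\ref{5.5}) gives $\varphi'^2 = c^2 + 2B > c^2$, so $\varphi' > c$, a contradiction. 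Hence $g$ is nowhere antiholomorphic, and Theorem \ref{thm 3.2} produces a conformal minimal immersion $X:\mathbb{C}\to\widetilde{E(2)}$ with Gauss map $g$, unique up to left translation.

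It remains to produce the closed-form expressions. Here I would feed $g$ into the Weierstrass data (\ref{3.3}) to obtain $\eta$ and the components $A_1, A_2, A_3$ in the frame $\{E_1,E_2,E_3\}$, then pass to the coordinate frame via (\ref{5.6}). The third equation integrates immediately: separating variables in the ${x_3}_z$ relation (\ref{5.7}) and using (\ref{5.8}) gives $x_3 = -\lambda_1\lambda_2 D v + \lambda_1\lambda_2 G(u)$ as in (\ref{5.9}). With $x_3$ now an explicit function of $(u,v)$, I would substitute it into the first two equations of (\ref{5.6}) and integrate the resulting closed real $1$-forms $\partial_u x_1\,du + \partial_v x_1\,dv$ and analogously for $x_2$; equivalently, I would verify the proposed formulas directly by differentiating them and matching against (\ref{5.6}). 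The auxiliary quantities $M_1,\dots,M_4$ of (\ref{5.10}) are chosen precisely to absorb the products of $\cos x_3, \sin x_3$ with $\cosh A, \sinh A$ that arise at this stage.

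The main obstacle is this final integration: the right-hand sides of the $x_1,x_2$ equations couple the $u$- and $v$-dependence through the factors $\cos x_3$ and $\sin x_3$, where $x_3$ itself mixes $u$ and $v$, so the antiderivatives are not separable in any naive way. Integrability of the $1$-forms is guaranteed a priori by the existence statement from Theorem \ref{thm 3.2}, so the task is purely to exhibit the primitive; the cleanest route is to take the stated formulas as an ansatz, differentiate using the identities (\ref{5.3}), (\ref{5.4}) and the defining ODEs (\ref{5.2}), (\ref{5.5}) for $f$ and $\varphi$, and check that they reproduce $A_1,A_2$ after the change of frame, with the $M_i$ serving as the natural grouping of terms and the integration constants fixed by the normalization at $u=v=0$.
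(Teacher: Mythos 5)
Your proposal is correct and follows essentially the same route as the paper: Proposition \ref{prop 5.3} for the PDE, the Weierstrass data (\ref{3.3}) to obtain the system (\ref{5.6})--(\ref{5.7}), direct integration for $x_3$ via (\ref{5.8})--(\ref{5.9}), and verification of the stated $x_1,x_2$ formulas against (\ref{5.6}) with the $M_i$ grouping the $\cos x_3,\sin x_3,\cosh A,\sinh A$ products. Your explicit checks of the hypotheses of Theorem \ref{thm 3.2} (nonvanishing of $R(g)=-2ie^{2(f(u)+cv)}B$ and the nowhere-antiholomorphic condition via $f'=DB$ and $\varphi'>c$ when $\theta=0$) are a welcome addition that the paper leaves implicit, but they do not change the structure of the argument.
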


Given $(c,\theta)\in\Omega$ and $U>0$ as in Proposition \ref{prop 5.1}, we define a function 
\begin{align}\label{5.11}
    H(c,\theta) & = Df(U) +cG(U)
     = \int_{0}^{U}D^2B{\ud}u + \int_{0}^{U}\frac{cD^2B^2-2c\cos{\theta}}{c+\varphi'(u)}{\ud}u.
\end{align}
Let us write $x=\cos{\varphi(u)}$ and $P(x) =  c^2 + 2\cos{\theta}B -D^2B^2$. Since $0\leq u\leq U$, we have $0\leq \varphi(u)\leq \pi$ and $-1\leq x \leq1$. Therefore, we obtain ${\ud}u = -\frac{{\ud}x}{\sqrt{(1-x^2)P(x)}}$, thus the function $H(c,\theta)$ also takes the form
\begin{align}\label{5.12}
    H(c,\theta) = \mathlarger{\int}_{-1}^{1}\frac{D^2B\left(2c+\sqrt{P(x)}\right)-2c\cos{\theta}}{\left(c+\sqrt{P(x)}\right)\sqrt{(1-x^2)P(x)}}{\ud}x.
\end{align}

\begin{lemma}\label{lem 5.2}
For any $c>0$, there exists a unique $\widetilde{\theta_{c}}\in(0,\frac{\pi}{2})\cap(0,\theta_{c}^+)$ such that
\begin{align*}
    H(c,\widetilde{\theta_{c}})=0.
\end{align*}
\end{lemma}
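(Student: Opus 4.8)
The plan is to prove existence and uniqueness of the zero $\widetilde{\theta_c}$ by an intermediate value argument in the variable $\theta$, with $c>0$ fixed, using the integral representation \eqref{5.12}. First I would regard $H(c,\cdot)$ as a continuous function on the interval $(-\theta_c^+,\theta_c^+)$ and restrict attention to $[0,\theta_c^+)$. The natural approach is to (i) identify the sign of $H$ at the two ends of a suitable subinterval, (ii) establish strict monotonicity of $\theta\mapsto H(c,\theta)$ so that the zero is unique, and (iii) locate the zero inside $(0,\tfrac{\pi}{2})\cap(0,\theta_c^+)$.

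**Sign analysis at the endpoints.** At $\theta=0$ we have $D=\sin\theta/c=0$, so from \eqref{5.11} the first integrand $D^2B$ vanishes and $H(c,0)=\int_0^U \frac{-2c}{c+\varphi'(u)}\,\ud u<0$, since the integrand is strictly negative. This gives the lower endpoint a definite sign. For the upper end I would examine $H$ as $\theta\to\theta_c^+$ (equivalently as $\theta\uparrow\tfrac{\pi}{2}$, whichever is the relevant constraint): here $D=\sin\theta/c$ grows while $\cos\theta$ decreases, so the $D^2B$ terms dominate and push $H$ toward positive values. The key computation is to show $H(c,\theta)>0$ for some $\theta\in(0,\tfrac{\pi}{2})\cap(0,\theta_c^+)$; I expect to exploit that the positive contribution scales like $\sin^2\theta$ while the negative contribution $-2c\cos\theta$ stays bounded, so positivity must occur before $\theta$ reaches $\tfrac{\pi}{2}$. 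Continuity of $H(c,\cdot)$ then yields a zero by the intermediate value theorem.

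**Monotonicity for uniqueness.** To get uniqueness I would compute $\partial_\theta H(c,\theta)$ and show it is strictly positive on the relevant range. Differentiating \eqref{5.12} under the integral sign (justified since the integrand is smooth in $\theta$ and the singularities at $x=\pm 1$ are integrable and $\theta$-independent in type), I would track how each occurrence of $\cos\theta$ and $D=\sin\theta/c$ responds: increasing $\theta$ increases $D^2=\sin^2\theta/c^2$ and decreases $\cos\theta$, and both effects should push the integrand upward on $(0,\tfrac{\pi}{2})$. If the pointwise derivative of the integrand is positive for every $x\in(-1,1)$, strict monotonicity of $H$ follows immediately and pins down the zero uniquely.

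**Main obstacle.** The hardest part will be controlling the $\theta$-derivative of the integrand in \eqref{5.12} uniformly in $x$, because $P(x)=c^2+2\cos\theta\,B-D^2B^2$ appears inside a square root and in the denominator, so differentiating produces several competing terms whose combined sign is not obviously positive. I would handle this by rewriting $P$ using the factorization from the proof of Lemma \ref{lem 5.1}, namely $P=-D^2\bigl(B-\tfrac{c^2}{1-\cos\theta}\bigr)\bigl(B+\tfrac{c^2}{1+\cos\theta}\bigr)$ when $\theta\notin\pi\mathbb{Z}$, which isolates the $\theta$-dependence and keeps $P>0$ on the whole range by the inequalities $\tfrac{c^2}{1-\cos\theta}>\lambda_1^2$ and $B\le\lambda_1^2$ that hold precisely for $(c,\theta)\in\Omega$. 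With $P$ bounded below by a positive constant, the differentiation becomes manageable and the sign of $\partial_\theta H$ should come out strictly positive, completing both existence and uniqueness.
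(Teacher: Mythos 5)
Your skeleton (intermediate value theorem in $\theta$ plus strict monotonicity of $\theta\mapsto H(c,\theta)$) matches the paper's, and two of your three steps are sound: $H(c,0)<0$ is exactly the paper's starting point, and the uniqueness plan via $\partial_\theta H>0$ is what the paper does, after rewriting $H(c,\theta)=\int_{-1}^{1}\frac{D^2B^2+c^2-c\sqrt{P(x)}}{B\sqrt{(1-x^2)P(x)}}\,\ud x$ so that every term of the differentiated integrand is manifestly positive when $\sin\theta>0$ and $\cos\theta>0$. The genuine gap is in the existence step, in the case $0<c\le\lambda_1$, where $\theta_c^+=\arccos\bigl(1-\frac{c^2}{\lambda_1^2}\bigr)\le\frac{\pi}{2}$. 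Your heuristic --- that the positive contribution ``scales like $\sin^2\theta$'' while $-2c\cos\theta$ ``stays bounded'', so positivity must occur before $\theta$ reaches $\frac{\pi}{2}$ --- fails there: on $(0,\theta_c^+)$ one has $\sin^2\theta\le\sin^2\theta_c^+$ and $\cos\theta\ge\cos\theta_c^+=1-\frac{c^2}{\lambda_1^2}>0$ for $c<\lambda_1$, so neither quantity degenerates and no pointwise dominance of the positive term over $-2c\cos\theta$ is available; the integrand can in fact remain negative on part of $(-1,1)$ even at $\theta=\theta_c^+$ (e.g.\ near $x=0$, where $B=\lambda_2^2$, if $\lambda_2$ is small).

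The actual mechanism, which the paper isolates, is that $H(c,\theta)\to+\infty$ as $\theta\to\theta_c^+$: at that endpoint $\frac{c^2}{1-\cos\theta}\to\lambda_1^2$, so in the factorization $P(x)=-D^2\bigl(B-\frac{c^2}{1-\cos\theta}\bigr)\bigl(B+\frac{c^2}{1+\cos\theta}\bigr)$ the first factor vanishes at $x=\pm1$ (where $B=\lambda_1^2$), and $P$ degenerates like $1-x^2$ at the endpoints of integration. The paper splits $H=L_1+L_2$, shows $L_1$ stays bounded as $\theta\to\theta_c^+$, and bounds $L_2$ from below by a positive constant times $\int_{-1}^{1}\frac{\ud x}{1-x^2}=+\infty$; the divergence, not pointwise positivity, is what forces a sign change. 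This is precisely the opposite of the remedy in your ``main obstacle'' paragraph: you propose to use the factorization of $P$ to keep $P$ ``bounded below by a positive constant'', but no such uniform bound exists as $\theta\to\theta_c^+$, and it is exactly the loss of that bound (a non-integrable endpoint singularity with positive coefficient) that produces existence. As written, your argument establishes existence only for $c>\lambda_1$, where $\theta_c^+>\frac{\pi}{2}$ and one can evaluate at $\theta=\frac{\pi}{2}$, since $\cos\frac{\pi}{2}=0$ kills the negative term and gives $H(c,\frac{\pi}{2})>0$; for $0<c\le\lambda_1$ it has no route to positivity.
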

\begin{proof}
It is easy to see that $H(c,\theta)$ is a continuous function on $\Omega$. This function has the property that 
\begin{align*}
    H(c,0) = \mathlarger{\int}_{-1}^{1}\frac{-2c}{\left(c+\sqrt{P(x)}\right)\sqrt{(1-x^2)P(x)}}{\ud}x < 0.
\end{align*}
(1). If $c>\sqrt{2}\lambda_1$, then $\theta_{c}^+=\pi$ and 
\begin{align*}
    H(c,\frac{\pi}{2}) = \mathlarger{\int}_{-1}^{1}\frac{B\left(2c+\sqrt{P(x)}\right)}{\left(c+\sqrt{P(x)}\right)\sqrt{(1-x^2)P(x)}}{\ud}x > 0.
\end{align*}
Hence there exists $\widetilde{\theta_{c}}\in(0,\frac{\pi}{2})\cap(0,\theta_{c}^+)$ such that $H(c,\widetilde{\theta_{c}})=0$.\\
(2). If $0<c\leq\sqrt{2}\lambda_1$, then $\theta_{c}^{+} = \arccos{\left(1-\frac{c^2}{\lambda_1^2}\right)}$.

(i). When $\lambda_1 < c \leq \sqrt{2}\lambda_1$, we have $\theta_{c}^{+}>\frac{\pi}{2}$. Then $H(c,\frac{\pi}{2})>0$ still holds, whence there is a $\widetilde{\theta_{c}}\in(0,\frac{\pi}{2})\cap(0,\theta_{c}^+)$ such that $H(c,\widetilde{\theta_{c}})=0$.

(ii). The only situation remained to be discussed is $0<c\leq\lambda_1$. In this case, we have $\theta_{c}^{+}\leq\frac{\pi}{2}$ and 
\begin{align*}
    \cos{\theta_{c}^{+}} = 1-\frac{c^2}{\lambda_1^2},\quad \sin^2{\theta_{c}^{+}} = 2\frac{c^2}{\lambda_1^2}-\frac{c^4}{\lambda_1^4}.
\end{align*}
In addition, we also get 
\begin{align*}
    \lim\limits_{\theta\to\theta_{c}^{+}} D^2  = \frac{2}{\lambda_1^2}-\frac{c^2}{\lambda_1^4}, \quad \lim\limits_{\theta\to\theta_{c}^{+}}\frac{c^2}{1-\cos{\theta}} = \lambda_1^2, \quad \lim\limits_{\theta\to\theta_{c}^{+}}-\frac{c^2}{1+\cos{\theta}}= -\frac{c^2}{2-\frac{c^2}{\lambda_1^2}}<0.
\end{align*}
Now we are going to prove $\lim\limits_{\theta\to\theta_{c}^{+}}H(c,\theta)>0$. Let us define two functions $L_1(c,\theta)$ and $L_2(c,\theta)$ by
\begin{align*}
    L_1(c,\theta) = \mathlarger{\int}_{-1}^{1}-\frac{2cD^2(\lambda_1^2-\lambda_2^2)}{c+\sqrt{P(x)}}\sqrt{\frac{1-x^2}{P(x)}}{\ud}x,
\end{align*}
\begin{align*}
    L_2(c,\theta) = \mathlarger{\int}_{-1}^{1}\frac{2c\lambda_1^2D^2 - 2c\cos{\theta} + D^2B\sqrt{P(x)}}{\left(c+\sqrt{P(x)}\right)\sqrt{(1-x^2)P(x)}}{\ud}x.
\end{align*}
Then $H(c,\theta) = L_1(c,\theta) + L_2(c,\theta)$.

From the estimation 
\begin{equation*}
\begin{aligned}
&\left|\frac{1-x^{2}}{P(x)}-\frac{1}{\left(\frac{\lambda_{1}^{2}+\lambda_{2}^{2}}{\lambda_{1}^{2}}\right)\left[\left(2-\frac{c^{2}}{\lambda_{1}^{2}}\right) B+c^{2}\right]}\right| \\\notag
\leq & \frac{D^{2}\left(\lambda_{1}^{2}-\lambda_{2}^{2}\right)^{2}+2\left(\frac{\lambda_{1}^{2}+\lambda_{2}^{2}}{\lambda_{1}^{2}}\right)\left(\lambda_{1}^{2}-\lambda_{2}^{2}\right)\left(2-\frac{c^{2}}{\lambda_{1}^{2}}\right)+2 c^{2}\left(\frac{2_{1}^{2}+\lambda_{2}^{2}}{\lambda_{1}^{2}}\right)}{D^{2}\left(\frac{\lambda_{1}^{2}+\lambda_{2}^{2}}{\lambda_{2}^{2}}\right)\left(\frac{c^{2}}{1-\cos \theta}-\lambda_{1}^{2}\right)\left(\lambda_{2}^{2}+\frac{c^{2}}{1+\cos \theta}\right)\left[\left(2-\frac{c^{2}}{\lambda_{1}^{2}}\right) \lambda_{2}^{2}+c^{2}\right]}\\
&  + \frac{D^{2} \lambda_{2}^{2}\left(2 \lambda_{1}^{2}-\lambda_{2}^{2}\right)+2 \cos \theta \lambda_{1}^{2}+c^{2}}{D^{2}\left(\frac{\lambda_{1}^{2}+\lambda_{2}^{2}}{\lambda_{2}^{2}}\right)\left(\frac{c^{2}}{1-\cos \theta}-\lambda_{1}^{2}\right)\left(\lambda_{2}^{2}+\frac{c^{2}}{1+\cos \theta}\right)\left[\left(2-\frac{c^{2}}{\lambda_{1}^{2}}\right) \lambda_{2}^{2}+c^{2}\right]},
\end{aligned}
\end{equation*}
we know there is a constant $M_{c,\theta}>0$ such that $\left|\frac{1-x^2}{P(x)}\right|\leq M_{c,\theta}$. Moreover, for each fixed $c$, when $\theta\rightarrow\theta_{c}^{+}$, this constant $M_{c,\theta}$ tends to a finite limit $M_c>0$. We may draw the conclusion that the function $\frac{1-x^2}{P(x)}$ is uniformly bounded when $\theta\rightarrow\theta_{c}^{+}$. Therefore, we obtain
\begin{align*}
    |L_1(c,\theta)|\leq 2D^2(\lambda_1^2-\lambda_2^2)\mathlarger{\int}_{-1}^{1}\sqrt{\frac{1-x^2}{P(x)}}{\ud}x \leq 4D^2(\lambda_1^2-\lambda_2^2)\sqrt{M_{c,\theta}}.
\end{align*}
This implies 
\begin{align*}
    \lim\limits_{\theta\to\theta_{c}^{+}}|L_1(c,\theta)|& \leq \lim\limits_{\theta\to\theta_{c}^{+}}4D^2(\lambda_1^2-\lambda_2^2)\sqrt{M_{c,\theta}}\\
    & = 4\left(\frac{2}{\lambda_1^2}-\frac{c^2}{\lambda_1^4}\right)(\lambda_1^2-\lambda_2^2)\sqrt{M_c}<+\infty.
\end{align*}
Thus the function $L_1(c,\theta)$ is bounded when $\theta\rightarrow\theta_{c}^{+}$.

We must also study the function $L_2(c,\theta)$. It is easy to see that $2cD^2\lambda_1^2-2c\cos{\theta}\rightarrow2c$ when $\theta\rightarrow\theta_{c}^{+}$. Hence for $\theta$ sufficiently close to $\theta_{c}^{+}$, we have 
\begin{align*}
    L_2(c,\theta)\geq\mathlarger{\int}_{-1}^{1}\frac{c}{\left(c+\sqrt{P(x)}\right)\sqrt{(1-x^2)P(x)}}{\ud}x.
\end{align*}
In addition, we notice that 
\begin{align*}
    &\sqrt{D^2\left(B+\frac{c^2}{1+\cos{\theta}}\right)}\left(c+\sqrt{P(x)}\right)\\
     & \leq \frac{\sqrt{2}}{\lambda_1}\sqrt{\lambda_1^2+\frac{c^2}{1+\cos{\theta}}}\left(c+\frac{\sqrt{2}}{\lambda_1}\sqrt{\lambda_1^2+\frac{c^2}{1+\cos{\theta}}}\sqrt{\frac{c^2}{1-\cos{\theta}}-\lambda_2^2}\right),
\end{align*}
thus for $\theta$ close enough to $\theta_{c}^{+}$, there is a positive constant $b_c$ such that 
\begin{align*}
    L_2(c,\theta)& \geq\mathlarger{\int}_{-1}^{1}\frac{c}{\left(c+\sqrt{P(x)}\right)\sqrt{(1-x^2)P(x)}}{\ud}x\\
    &\geq\mathlarger{\int}_{-1}^{1}\frac{b_c}{\sqrt{(1-x^2)\left(\frac{c^2}{1-\cos{\theta}}-B\right)}}{\ud}x.
\end{align*}
Consequently, 
\begin{align*}
     \lim\limits_{\theta\to\theta_{c}^{+}}L_2(c,\theta) &\geq \lim\limits_{\theta\to\theta_{c}^{+}}\mathlarger{\int}_{-1}^{1}\frac{b_c}{\sqrt{(1-x^2)\left(\frac{c^2}{1-\cos{\theta}}-B\right)}}{\ud}x\\
     & = \frac{b_c}{\sqrt{\lambda_1^2+\lambda_2^2}}\int_{-1}^{1}\frac{{\mathrm{d}}x}{1-x^2} = +\infty.
\end{align*}
This means $\lim\limits_{\theta\to\theta_{c}^{+}}H(c,\theta) = +\infty$, thus there exists a $\widetilde{\theta_{c}}\in(0,\frac{\pi}{2})\cap(0,\theta_{c}^+)$ such that $H(c,\widetilde{\theta_{c}})=0$.

The last task is to show that $\widetilde{\theta_{c}}\in(0,\frac{\pi}{2})\cap(0,\theta_{c}^+)$ is unique. In fact, we have
\begin{align*}
    H(c,\theta) & = \mathlarger{\int}_{-1}^{1}\frac{D^2B\left(2c+\sqrt{P(x)}\right)-2c\cos{\theta}}{\left(c+\sqrt{P(x)}\right)\sqrt{(1-x^2)P(x)}}{\ud}x\\
    & = \mathlarger{\int}_{-1}^{1}\frac{D^2B^2+c^2-c\sqrt{P(x)}}{B\sqrt{(1-x^2)P(x)}}{\ud}x.
\end{align*}
When $\theta\in(0,\frac{\pi}{2})\cap(0,\theta_{c}^+)$, we may compute the derivative 
\begin{equation*}
\begin{aligned}
\frac{\partial}{\partial \theta} H(c,\theta) &=\mathlarger{\int}_{-1}^{1} \frac{\partial}{\partial \theta}\left(\frac{D^{2} B^{2}+c^{2}-c \sqrt{P(x)}}{B\sqrt{\left(1-x^{2}\right) P(x)} }\right) {\ud} x \\
&=\mathlarger{\int}_{-1}^{1} \frac{2 \sin \theta \cos \theta B P(x)+\left(D^{2} B^{2}+c^{2}\right)\left(c^{2} \sin \theta+B \sin \theta \cos \theta\right)}{c^{2}\sqrt{1-x^{2}} P(x)^{\frac{3}{2}}} {\ud} x\\
& >0.
\end{aligned}
\end{equation*}
This implies that the function $\psi(\theta):=H(c,\theta)$ is increasing on $(0,\frac{\pi}{2})\cap(0,\theta_{c}^+)$, thus $\widetilde{\theta_{c}}$ is unique.
\end{proof}

\begin{lemma}\label{lem 5.3}
Let $c>0$ and $\widetilde{\theta_{c}}\in(0,\frac{\pi}{2})\cap(0,\theta_{c}^+)$ be defined as in Lemma \ref{lem 5.2}, then 
\begin{align*}
    \lim\limits_{c\to +\infty}\widetilde{\theta_{c}} = \frac{\pi}{2}.
\end{align*}
\end{lemma}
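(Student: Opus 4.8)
The plan is to combine the monotonicity of $H$ in $\theta$ (already established in the proof of Lemma \ref{lem 5.2}) with a precise large-$c$ asymptotic expansion of $H(c,\theta)$. First I would fix an arbitrary $\theta_0\in(0,\frac{\pi}{2})$ and observe that, for $c$ large enough, $c>\sqrt{2}\lambda_1$, so $\theta_{c}^{+}=\pi$ and the admissible interval $(0,\frac{\pi}{2})\cap(0,\theta_{c}^{+})$ is simply $(0,\frac{\pi}{2})$, on which $\theta\mapsto H(c,\theta)$ is strictly increasing with unique zero $\widetilde{\theta_{c}}$. Consequently, to prove $\widetilde{\theta_{c}}>\theta_0$ for all large $c$ it suffices to show $H(c,\theta_0)<0$ for all large $c$. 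Since $\widetilde{\theta_{c}}<\frac{\pi}{2}$ always, establishing $\widetilde{\theta_{c}}>\theta_0$ eventually for every $\theta_0<\frac{\pi}{2}$ yields $\liminf_{c\to+\infty}\widetilde{\theta_{c}}\geq\frac{\pi}{2}$ and hence the claim.

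The analytic core is the asymptotic of $H$. I would work with the second form derived in the proof of Lemma \ref{lem 5.2},
\begin{align*}
    H(c,\theta) = \mathlarger{\int}_{-1}^{1}\frac{D^2B^2+c^2-c\sqrt{P(x)}}{B\sqrt{(1-x^2)P(x)}}{\ud}x,
\end{align*}
where $B=\lambda_1^2x^2+\lambda_2^2(1-x^2)\in[\lambda_2^2,\lambda_1^2]$, $D=\frac{\sin\theta}{c}$ and $P(x)=c^2+2\cos\theta B-D^2B^2$. The key algebraic step is the conjugate identity
\begin{align*}
    c^2-c\sqrt{P(x)} = \frac{c\left(c^2-P(x)\right)}{c+\sqrt{P(x)}} = \frac{c\left(-2\cos\theta B + D^2B^2\right)}{c+\sqrt{P(x)}},
\end{align*}
which, together with $D^2B^2=\frac{\sin^2\theta B^2}{c^2}\to 0$ and $\sqrt{P(x)}=c+O(1/c)$, shows that the numerator $D^2B^2+c^2-c\sqrt{P(x)}$ tends to $-\cos\theta_0\,B$ pointwise while the denominator behaves like $Bc\sqrt{1-x^2}$. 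Thus for fixed $\theta_0$ the integrand, multiplied by $c$, converges pointwise on $(-1,1)$ to $\frac{-\cos\theta_0}{\sqrt{1-x^2}}$, so that formally $cH(c,\theta_0)\to-\cos\theta_0\int_{-1}^{1}\frac{{\ud}x}{\sqrt{1-x^2}}=-\pi\cos\theta_0<0$.

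The main obstacle is to justify this interchange of limit and integral, because of the integrable singularity at $x=\pm1$ coming from $\frac{1}{\sqrt{1-x^2}}$. I would control it by a dominated convergence argument: using $c+\sqrt{P(x)}\geq c$ in the conjugate identity gives $\left|D^2B^2+c^2-c\sqrt{P(x)}\right|\leq 2B+2D^2B^2\leq 3\lambda_1^2$ for $c$ large, while $B\geq\lambda_2^2$ and $P(x)\geq c^2-2\lambda_1^2-\lambda_1^4/c^2\geq c^2/2$ give $\sqrt{P(x)}\geq c/\sqrt{2}$. Hence the $c$-factors cancel and the integrand times $c$ is bounded by $\frac{M}{\sqrt{1-x^2}}$ for a constant $M$ independent of $c$ (for $c$ large), which is integrable on $(-1,1)$. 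Dominated convergence then yields $cH(c,\theta_0)\to-\pi\cos\theta_0<0$, so $H(c,\theta_0)<0$ for all large $c$. Finally, letting $\theta_0\uparrow\frac{\pi}{2}$ and invoking $\widetilde{\theta_{c}}<\frac{\pi}{2}$ gives $\lim_{c\to+\infty}\widetilde{\theta_{c}}=\frac{\pi}{2}$.
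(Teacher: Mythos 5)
Your proposal is correct, but it takes a genuinely different logical route from the paper's. The paper never uses the monotonicity of $H$ in $\theta$: it works directly at the moving point $\theta=\widetilde{\theta_{c}}$, starting from the identity $0=cH(c,\widetilde{\theta_{c}})$, splitting $cH$ into three integrals $I_1(c)-2\cos{\widetilde{\theta_{c}}}\,I_2(c)+I_3(c)$, proving $I_1(c)\to0$ and $I_3(c)\to0$ while $I_2(c)$ tends to a positive constant, and then reading off $\cos{\widetilde{\theta_{c}}}=\bigl(I_1(c)+I_3(c)\bigr)/\bigl(2I_2(c)\bigr)\to0$. Because $\widetilde{\theta_{c}}$ varies with $c$, the paper's integral estimates must hold uniformly in $\theta$, which it secures via $D_{c}\le\frac{1}{c}$ and two-sided bounds on $P$. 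You instead freeze $\theta=\theta_0\in(0,\frac{\pi}{2})$, prove the fixed-parameter asymptotic $cH(c,\theta_0)\to-\pi\cos{\theta_0}<0$ by dominated convergence (your conjugate identity, the dominating function $\frac{M}{\sqrt{1-x^2}}$, and the observation that the admissible interval is exactly $(0,\frac{\pi}{2})$ once $c>\sqrt{2}\lambda_1$ are all correct), and then invoke the strict monotonicity $\partial_{\theta}H>0$ established in Lemma \ref{lem 5.2} to trap $\widetilde{\theta_{c}}>\theta_0$ for all large $c$; together with $\widetilde{\theta_{c}}<\frac{\pi}{2}$ this squeezes the limit. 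What each approach buys: yours avoids all uniformity-in-$\theta$ issues, since the limit is taken at a fixed $\theta_0$ where dominated convergence is routine, at the price of reusing the monotonicity statement as an extra ingredient; the paper's argument is self-contained apart from the zero identity and extracts $\cos{\widetilde{\theta_{c}}}$ directly, but its estimates must be uniform along the moving zero. A side remark illustrating the robustness of both arguments: the exact limiting constants are immaterial — the paper's claimed limit $I_2(c)\to\frac{(\lambda_1^2+\lambda_2^2)\pi}{4}$ appears to be a slip (the correct value is $\frac{\pi}{2}$, as one sees from $\sqrt{P(x)}=c+O(\frac{1}{c})$), but only the positivity of that limit is used there, just as only the sign of $-\pi\cos{\theta_0}$ matters in your proof.
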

\begin{proof}
We fix $\theta=\widetilde{\theta_{c}}$, then $D_c = \frac{\sin{\widetilde{\theta_{c}}}}{c}\leq\frac{1}{c}$. When $c\geq2\lambda_1$, we can estimate 
\begin{align*}
    \left|P(x)-c^2\right| = \left|(2\cos{\widetilde{\theta_{c}}} - D_{c}^2B)B\right|\leq\lambda_1^2\left(2+\frac{\lambda_1^2}{c^2}\right)\leq\frac{9}{4}\lambda_1^2.
\end{align*}
This gives
\begin{align*}
    0<\frac{7}{4}\lambda_1^2\leq c^2-\frac{9}{4}\lambda_1^2\leq P(x)\leq c^2+\frac{9}{4}\lambda_1^2,
\end{align*}
thus 
\begin{align*}
    \sqrt{c^2-\frac{9}{4}\lambda_1^2}\leq\sqrt{P(x)}\leq\sqrt{c^2+\frac{9}{4}\lambda_1^2}.
\end{align*}
Let us denote 
\begin{align*}
    I_1(c) = \mathlarger{\int}_{-1}^{1}\frac{2cD_{c}^2B}{\left(c+\sqrt{P(x)}\right)\sqrt{(1-x^2)P(x)}}{\ud}x,
\end{align*}
\begin{align*}
    I_2(c) = \mathlarger{\int}_{-1}^{1}\frac{c^2}{\left(c+\sqrt{P(x)}\right)\sqrt{(1-x^2)P(x)}}{\ud}x,
\end{align*}
\begin{align*}
    I_3(c) = \mathlarger{\int}_{-1}^{1}\frac{cD_{c}^2B}{\left(c+\sqrt{P(x)}\right)\sqrt{(1-x^2}}{\ud}x.
\end{align*}
On the one hand, we obtain from Lemma \ref{lem 5.2} that 
\begin{align*}
    0 = cH(c,\widetilde{\theta_{c}}) = I_1(c) - 2\cos{\widetilde{\theta_{c}}}I_2(c) + I_3(c).
\end{align*}
One the other hand, when $c$ tends to $\infty$, we have 
\begin{align*}
    \lim\limits_{c\to +\infty}I_1(c) = 0,\quad \lim\limits_{c\to +\infty}I_2(c) =\frac{(\lambda_1^2+\lambda_2^2)\pi}{4},\quad \lim\limits_{c\to +\infty}I_3(c) = 0.
\end{align*}
This means 
\begin{align*}
    0 = \lim\limits_{c\to +\infty}cH(c,\widetilde{\theta_{c}}) = -2\left(\lim\limits_{c\to +\infty}\cos{\widetilde{\theta_{c}}}\right)\frac{(\lambda_1^2+\lambda_2^2)\pi}{4},
\end{align*}
whence $\lim\limits_{c\to +\infty}\cos{\widetilde{\theta_{c}}}=0$ and $\lim\limits_{c\to +\infty}\widetilde{\theta_{c}}=\frac{\pi}{2}$. 
\end{proof}

\begin{theorem}\label{thm 5.2}
Let $c>0$ and $\widetilde{\theta_{c}}\in(0,\frac{\pi}{2})\cap(0,\theta_{c}^+)$ be defined as in Lemma \ref{lem 5.2}. Then there exists a $Z\in\mathbb{C}\setminus\{0\}$ such that the corresponding minimal immersion $X:\mathbb{C}\rightarrow\widetilde{E(2)}$ given by Theorem \ref{thm 5.1} satisfies 
\begin{align*}
    X(z + Z) = X(z), \quad \forall z\in\mathbb{C}.
\end{align*}
\end{theorem}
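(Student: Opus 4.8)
The plan is to exhibit an explicit period rather than argue abstractly. Since the immersion of Theorem 5.1 depends on $z=u+iv$ only through the quantities $\varphi(u)$, $f(u)$, $\varphi'(u)$, $B$, the combination $A=f(u)+cv$, and the height $x_3$, I would try the candidate
\[
Z = 2U - \frac{2\pi}{c}\,i,
\]
where $U>0$ is the number furnished by Proposition 5.1 and $\theta=\widetilde{\theta_c}$ is the angle selected in Lemma 5.2. The first step is to record how the building blocks transform under $z\mapsto z+Z$, that is, under $u\mapsto u+2U$ and $v\mapsto v-\tfrac{2\pi}{c}$. By Propositions 5.1 and 5.2 one has $\varphi(u+2U)=\varphi(u)+2\pi$ and $f(u+2U)=f(u)+2\pi$, so $B$, $f'=DB$ and $\varphi'=\sqrt{c^2+2\cos\theta\,B-D^2B^2}$ are all invariant, and because $\varphi$ increases by an integer multiple of $2\pi$, so are $\cos\varphi$ and $\sin\varphi$.

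Next I would treat the two ``height'' quantities. For $A=f(u)+cv$ the increment is $\Delta A = 2\pi + c\cdot(-\tfrac{2\pi}{c}) = 0$, so $\cosh A$ and $\sinh A$ are unchanged. For the third coordinate, Proposition 5.4 gives $G(u+2U)=G(u)+2G(U)$, and combining this with the formula $x_3=-\lambda_1\lambda_2 Dv+\lambda_1\lambda_2 G(u)$ of (5.9) yields
\[
\Delta x_3 = \lambda_1\lambda_2\Big(-D\big(-\tfrac{2\pi}{c}\big)+2G(U)\Big) = \frac{2\lambda_1\lambda_2}{c}\big(D\pi + cG(U)\big) = \frac{2\lambda_1\lambda_2}{c}\,H(c,\widetilde{\theta_c}),
\]
where I have used $f(U)=\pi$ (Proposition 5.2) and the definition (5.11) of $H$. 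The decisive point is that $\widetilde{\theta_c}$ was chosen in Lemma 5.2 exactly so that $H(c,\widetilde{\theta_c})=0$; hence $\Delta x_3=0$ and $x_3$ is invariant.

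With $A$ and $x_3$ both invariant, the four auxiliary functions $M_1,\dots,M_4$ of (5.10), which depend on $(u,v)$ only through $A$ and $x_3$, are unchanged. Inspecting the explicit formulas for $x_1$ and $x_2$ in Theorem 5.1, every summand is a product of an invariant factor (one of $B^{-1}$, $f'$, $c-\varphi'$, $M_i$) with exactly one of $\cos\varphi$ or $\sin\varphi$; since the shift in $u$ is the \emph{full} period $2U$, these trigonometric factors are invariant as well, so $x_1$ and $x_2$ are invariant. Assembling the three coordinates gives $X(z+Z)=X(z)$, and $Z=2U-\tfrac{2\pi}{c}i\neq 0$ since $U>0$, so $X$ descends to an annulus $\mathbb{C}/\langle Z\rangle$.

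The main obstacle is genuinely the closing of the third coordinate, i.e.\ the vanishing of $\Delta x_3$, which is precisely the period problem resolved by Lemma 5.2; the horizontal conditions close up almost for free. I would emphasize that the shift must be the \emph{double} period: with a single $U$ one gets $\varphi\mapsto\varphi+\pi$, so $\cos\varphi,\sin\varphi$ flip sign and one only obtains $X(z+Z)=(\text{rotation by }\pi)\cdot X(z)$ rather than equality, matching the $\pi$-symmetry visible in the helicoid case. I expect the remaining work to be routine bookkeeping: confirming $\Delta A=0$ and reducing $\Delta x_3$ to $H(c,\widetilde{\theta_c})$ via $f(U)=\pi$, after which the invariance of $x_1$ and $x_2$ follows by inspection.
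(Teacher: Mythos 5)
Your overall strategy is exactly the paper's: exhibit an explicit translation $Z$ with real part $2U$, reduce the periodicity of $x_1,x_2$ to that of $A=f(u)+cv$ (since $B$, $f'$, $c-\varphi'$, $\cos\varphi$, $\sin\varphi$ and the $M_i$ are then invariant), and close the third coordinate via $H(c,\widetilde{\theta_{c}})=0$ from Lemma \ref{lem 5.2}. However, your specific candidate $Z=2U-\frac{2\pi}{c}i$ does not work, because it rests on the identity $f(U)=\pi$, i.e.\ on reading Proposition \ref{prop 5.2}(2)--(3) literally, and that identity is false: it is a misprint in the paper. The proof of Proposition \ref{prop 5.2} (``the same as Proposition \ref{prop 5.1}'') only shows that $f'$ is $U$-periodic, hence the additivity $f(u+U)=f(u)+f(U)$; nothing normalizes the increment $f(U)=D\int_0^U B\,{\ud}u$ to equal $\pi$ (compare Proposition \ref{prop 5.4}, which is correctly stated in the additive form $G(u+U)=G(u)+G(U)$). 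Concretely, in the flat case $\lambda_1=\lambda_2=1$ one has $B\equiv 1$, so $f(U)=DU$ and $U=\pi/\sqrt{c^2+2\cos{\widetilde{\theta_{c}}}-D^2}$ (the paper's own formula in the total-curvature computation at the end of Section 5); then $f(U)=\pi$ would force $2D^2=c^2+2\cos{\widetilde{\theta_{c}}}$, which is impossible for $c\geq 2$ since $2D^2\leq 2/c^2<c^2$. With the correct additivity, your increments under $z\mapsto z+Z$ become $\Delta A=2f(U)-2\pi$ and, using $cG(U)=-Df(U)$ from $H(c,\widetilde{\theta_{c}})=0$, $\Delta x_3=\frac{2\lambda_1\lambda_2 D}{c}\left(\pi-f(U)\right)$; both are nonzero whenever $f(U)\neq\pi$, so neither the horizontal part (through $\cosh A$, $\sinh A$) nor the vertical part closes up, and your $Z$ is not a period.

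The repair is minimal and yields precisely the paper's proof: take $Z=2U-i\,\frac{2f(U)}{c}$. Then $\Delta A=f(u+2U)-2f(U)=f(u)$, wait—more precisely $A(z+Z)=f(u+2U)+c\left(v-\frac{2f(U)}{c}\right)=f(u)+cv=A(z)$, using only $f(u+2U)=f(u)+2f(U)$; and $x_3(z+Z)-x_3(z)=\lambda_1\lambda_2\left(\frac{2Df(U)}{c}+2G(U)\right)=\frac{2\lambda_1\lambda_2}{c}H(c,\widetilde{\theta_{c}})=0$ by the definition (\ref{5.11}) of $H$ and Lemma \ref{lem 5.2}, with no appeal to $f(U)=\pi$. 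Everything else in your write-up---the invariance of $B$, $f'$, $\varphi'$, $\cos\varphi$, $\sin\varphi$ under $u\mapsto u+2U$, the observation that $M_1,\dots,M_4$ depend on $(u,v)$ only through $A$ and $x_3$, the term-by-term inspection of $x_1,x_2$, and the remark that a single shift by $U$ only produces the $\pi$-rotation symmetry of Remark \ref{rmk 5.2}---is correct and goes through verbatim with this corrected $Z$, which is nonzero since $U>0$ and $f(U)>0$.
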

\begin{proof}
Let us take $Z=2U-i\frac{2f(U)}{c}$. Since $U>0$ and 
\begin{align*}
    f(U) = \int_{0}^{U} \frac{\sin{\widetilde{\theta_{c}}}}{c}\left(\lambda_1^2\cos^2{\varphi(u)} + \lambda_2^2\sin^2{\varphi(u)}\right){\ud}u>0,
\end{align*}
we have $Z\in\mathbb{C}\setminus\{0\}$. Now we need to verify that the coordinate functions $x_1, x_2, x_3$ are all $Z$-periodic.

(i). For $x_3$, we utilise formula (\ref{5.9}), which gives
\begin{align*}
    x_3(z+Z) & = x_3\left((u+2U) + i\left(v-\frac{2f(U)}{c}\right)\right)\\
    & = -\lambda_1\lambda_2D\left(v-\frac{2f(U)}{c}\right) + \lambda_1\lambda_2G(u+2U)\\
    & = -\lambda_1\lambda_2Dv + \lambda_1\lambda_2G(u) +\frac{2\lambda_1\lambda_2}{c}H(c,\widetilde{\theta_{c}})\\
    & = x_3(z).
\end{align*}

(ii). As for $x_1$ and $x_2$, by Proposition \ref{prop 5.1} and Proposition \ref{prop 5.2}, we can see that it is enough to show the $Z$-periodicity of $A(u+iv)=f(u)+cv$. We may compute 
\begin{align*}
    A(z+Z) & = A\left((u+2U) + i\left(v-\frac{2f(U)}{c}\right)\right)\\
    & = f(u+2U) +c\left(v-\frac{2f(U)}{c}\right)\\
    & = f(u)+cv\\
    & = A(z).
\end{align*}
This completes the proof.
\end{proof}

\begin{remark}
This kind of period problem does not appear in Desmonts' paper \cite{desmonts2015constructions}. In his case, some important functions which are similar to our $f$ and $G$ already possess some periodic properties, thus the immersion itself is automatically periodic.
\end{remark}\label{rmk 5.1}

\begin{definition}\label{def 5.1}
   Let $c>0$ and $\widetilde{\theta_{c}}\in(0,\frac{\pi}{2})\cap(0,\theta_{c}^+)$ be defined as in Lemma \ref{lem 5.2}. The minimal surface given by $X:\mathbb{C}\rightarrow\widetilde{E(2)}$ in Theorem \ref{thm 5.1} with $\theta = \widetilde{\theta_{c}}$ is called a catenoid of parameter $c$ and it will be denoted by $\mathfrak{C}_c$.
\end{definition}

The metric induced by the immersion $X:\mathbb{C}\rightarrow\widetilde{E(2)}$ on the catenoid $\mathfrak{C}_c$ is given by
\begin{align}\label{5.13}
    \mathrm{d}s^2 = \rho^2|\mathrm{d}z|^2 := \frac{f'^2(u) + \left(c-\varphi'(u)\right)^2}{B^2}\cosh^2{\left(f(u)+cv\right)}|\mathrm{d}z|^2. 
\end{align}

\begin{remark}\label{rmk 5.2}
The catenoid $\mathfrak{C}_c$ has some symmetries. One may check that
\begin{equation*}
\left\{
\begin{aligned}
  & ~x_1(z+\frac{Z}{2}) = -x_1(z),\\
  & ~x_2(z+\frac{Z}{2}) = -x_2(z),\\
  & ~x_3(z+\frac{Z}{2}) = x_3(z).
\end{aligned}
\right.
\end{equation*}
It means the catenoid $\mathfrak{C}_c$ is symmetric by rotation of angle $\pi$ around the $x_3$-axis. In a similar way, we get
 \begin{equation*}
\left\{
\begin{aligned}
  & ~x_1(-z) = x_1(z),\\
  & ~x_2(-z) = -x_2(z),\\
  & ~x_3(-z) = -x_3(z).
\end{aligned}
\right.
\end{equation*}
Thus $\mathfrak{C}_c$ is symmetric by rotation of angle $\pi$ around the $x_1$-axis. Therefore, $\mathfrak{C}_c$ is also symmetric by rotation of angle $\pi$ around the $x_2$-axis.
\end{remark}

Now we would like to study some other interesting geometric properties of the catenoid $\mathfrak{C}_c$.

\begin{proposition}\label{prop 5.5}
For every $\mu\in\mathbb{R}$, the intersection of the catenoid $\mathfrak{C}_c$ with the plane  $\{x_3 = \lambda_1\lambda_2\mu\}$ is a non-empty closed embedded convex curve.
\end{proposition}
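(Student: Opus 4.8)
The plan is to describe the intersection curve explicitly using the immersion formulas from Theorem~\ref{thm 5.1}, then prove the three claimed properties (non-empty, closed/embedded, convex) by analyzing this curve as a planar curve in the $(x_1,x_2)$-plane. Since $x_3(u+iv) = -\lambda_1\lambda_2 D v + \lambda_1\lambda_2 G(u)$, the condition $x_3 = \lambda_1\lambda_2\mu$ reads $G(u) - Dv = \mu$, which determines $v$ as an affine function of $u$, namely $v = v(u) = (G(u)-\mu)/D$ (recall $D = \sin\widetilde{\theta_c}/c > 0$ for $\theta = \widetilde{\theta_c}$). Substituting back, the intersection becomes a curve $u \mapsto (x_1(u, v(u)), x_2(u, v(u)))$ parametrized by $u \in \mathbb{R}$. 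First I would verify non-emptiness, which is immediate since for every $u$ there is a corresponding $v(u)$, so the intersection is a genuine curve rather than empty.

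Next I would address closedness and embeddedness. The key observation is that along the intersection, the quantity $A = f(u) + cv$ becomes a function of $u$ alone: $A(u) = f(u) + c\,v(u) = f(u) + \frac{c}{D}(G(u)-\mu)$. Because $\mathfrak{C}_c$ is $Z$-periodic with $Z = 2U - i\frac{2f(U)}{c}$ (Theorem~\ref{thm 5.2}), I expect the curve to close up after $u$ increases by the real period $2U$: I would check that increasing $u$ by $2U$ together with the induced shift $v \mapsto v - 2f(U)/c$ reproduces the same point, using the quasi-periodicity $G(u+2U) = G(u) + 2G(U)$, $f(u+2U) = f(u)+2\pi$ (Propositions~\ref{prop 5.2}, \ref{prop 5.4}), $\varphi(u+2U)=\varphi(u)+2\pi$, and the defining relation $H(c,\widetilde{\theta_c}) = Df(U)+cG(U) = 0$ from Lemma~\ref{lem 5.2}; this last identity is precisely what forces the closure. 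Embeddedness should follow by showing the map $u \mapsto (x_1, x_2)$ is injective on one period, which I would reduce to a monotonicity statement about a suitable angular coordinate.

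The main work, and the hard part, is convexity. The natural strategy is to introduce polar-type coordinates and show that the tangent angle of the planar curve is strictly monotone, equivalently that the signed geodesic curvature of the planar curve never vanishes. Given the structure of $x_1, x_2$ built from the $M_i$ in \eqref{5.10}, which mix $\cos x_3, \sin x_3$ with $\cosh A, \sinh A$, I anticipate that after substituting $v = v(u)$ the curve can be written in a form where the radial part grows like $\cosh A(u)$ while the angular part is controlled by $x_3$ and $\varphi$; computing $x_1 x_2' - x_2 x_1'$ and showing it has constant sign is the crux. I would organize this by computing the derivative of the argument $\arg(x_1 + i x_2)$ along the curve and proving it does not change sign, exploiting the bounds $\lambda_2^2 \le B \le \lambda_1^2$ and the sign of $\varphi'$ from Lemma~\ref{lem 5.1} and Proposition~\ref{prop 5.1}. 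I expect this curvature computation to be the principal obstacle, both because of the algebraic bulk of the $M_i$ expressions and because establishing the strict sign of the relevant determinant may require carefully combining the ODEs \eqref{5.1}, \eqref{5.2} for $\varphi$ and $f$ rather than brute-force expansion.
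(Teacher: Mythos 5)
Your reduction of the problem and your handling of the first two assertions match the paper's own proof. The paper also substitutes $v=(G(u)-\mu)/D$, and the closure of the resulting curve under $u\mapsto u+2U$ does rest exactly where you place it: on the quasi-periodicities of $f$, $G$, $\varphi$ (Propositions \ref{prop 5.1}, \ref{prop 5.2}, \ref{prop 5.4}) combined with the period relation $H(c,\widetilde{\theta_{c}})=Df(U)+cG(U)=0$ of Lemma \ref{lem 5.2}. So non-emptiness and closedness are fine.

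The gap is in the convexity step, which you yourself single out as the crux. You state the correct criterion (strictly monotone tangent angle, equivalently nonvanishing signed curvature), but the quantity you then propose to compute, $x_1x_2'-x_2x_1'$, does not detect it: a constant sign for that determinant says that $\arg(x_1+ix_2)$ is monotone, i.e.\ that the curve is star-shaped with respect to the origin (and simple, if the winding about the origin is one), and a closed star-shaped curve need not be convex. Convexity requires a constant sign for $x_1'x_2''-x_2'x_1''$, together with total turning $2\pi$. Moreover, your worry about the ``algebraic bulk'' of the $M_i$ in \eqref{5.10} dissolves once you notice what the paper exploits: along the intersection, $x_3\equiv\lambda_1\lambda_2\mu$ is \emph{constant}, so left translation by $(0,0,-x_3)$ is a single ambient isometry acting on the plane as a fixed rotation; it removes every $\cos x_3$, $\sin x_3$ factor and leads to the formulas \eqref{5.15}, by which the tangent vector of the (rotated) curve is a positive multiple of $\bigl(\tfrac{1}{\lambda_1}\sin\varphi(u),\,-\tfrac{1}{\lambda_2}\cos\varphi(u)\bigr)$. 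From this, regularity is immediate, and convexity follows either as in the paper --- on $u\in(-\tfrac{U}{2},\tfrac{U}{2})$ the curve is a graph with $\dod{\widetilde{x_1}}{\widetilde{x_2}}=-\tfrac{\lambda_2}{\lambda_1}\tan\varphi(u)$ monotone, and the symmetry $u\mapsto u+U$ (rotation by $\pi$) handles the other half --- or, closer to your language, by noting that the cross product of $\bigl(\tfrac{1}{\lambda_1}\sin\varphi,-\tfrac{1}{\lambda_2}\cos\varphi\bigr)$ with its $\varphi$-derivative equals $\tfrac{1}{\lambda_1\lambda_2}>0$, so the tangent direction winds monotonically exactly once as $\varphi$ increases by $2\pi$ over one period. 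Replacing your determinant by this tangent-angle computation is what is needed to close the argument.
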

\begin{proof}
(1). The intersection is not empty because $x_3\left(0-i\frac{\mu}{D}\right)=\lambda_1\lambda_2\mu$. \\
(2). Let us denote the set of intersection by $\gamma$. The condition 
\begin{align*}
    x_3(u+iv) = -\lambda_1\lambda_2Dv + \lambda_1\lambda_2G(u) = \lambda_1\lambda_2\mu
\end{align*}
implies $v=\frac{G(u)-\mu}{D}$. By setting 
\begin{equation}\label{5.14}
\left\{
\begin{aligned}
  & ~J_1 = f'(u)\cos{\varphi(u)}\cosh{A} - \left(c-\varphi'(u)\right)\sin{\varphi(u)}\sinh{A},\\
  & ~J_2 = f'(u)\cos{\varphi(u)}\sinh{A} - \left(c-\varphi'(u)\right)\sin{\varphi(u)}\cosh{A},\\
  & ~J_3 = f'(u)\sin{\varphi(u)}\cosh{A} + \left(c-\varphi'(u)\right)\cos{\varphi(u)}\sinh{A},\\
  & ~J_4 = f'(u)\sin{\varphi(u)}\sinh{A} + \left(c-\varphi'(u)\right)\cos{\varphi(u)}\cosh{A},
\end{aligned}
\right.
\end{equation}
we can determine the expression of $\gamma$ to be 
\begin{align*}
    \gamma(u) & = X\left(u+i\left(\frac{G(u)-\mu}{D}\right)\right)\\
    & =\begin{pmatrix}
        -\frac{1}{\left(c^2+\lambda_1^2\lambda_2^2D^2\right)B}\left(\frac{c}{\lambda_1}\cos{x_3}J_1 - \lambda_2D\sin{x_3}J_2 - \frac{c}{\lambda_2}\sin{x_3}J_3 -\lambda_1D\cos{x_3}J_4\right)\\
        -\frac{1}{\left(c^2+\lambda_1^2\lambda_2^2D^2\right)B}\left(\frac{c}{\lambda_1}\sin{x_3}J_1 + \lambda_2D\cos{x_3}J_2 + \frac{c}{\lambda_2}\cos{x_3}J_3 -\lambda_1D\sin{x_3}J_4\right)\\
        x_3
        \end{pmatrix}.
\end{align*}
It is easy to see that $\gamma$ is a closed curve because $\gamma(u+2U)=\gamma(u)$ for all $u\in\mathbb{R}$.

We still need to prove that $\gamma$ is smooth and convex. We notice that the left multiplication of $(0,0,-x_3)$ with $x_3=\lambda_1\lambda_2\mu$ is an isometry of $\widetilde{E(2)}$ which preserves the smoothness and convexity of $\gamma$. Therefore, we are going to study the curve 
\begin{align*}
    \widetilde{\gamma}(u) & := (0,0,-x_3)*\gamma(u)\\
    & =\begin{pmatrix}
        -\frac{1}{\left(c^2+\lambda_1^2\lambda_2^2D^2\right)B}\left(\frac{c}{\lambda_1}J_1 - \lambda_1DJ_4\right)\\
        -\frac{1}{\left(c^2+\lambda_1^2\lambda_2^2D^2\right)B}\left(\frac{c}{\lambda_2}J_3 + \lambda_2DJ_2\right)\\
        0
        \end{pmatrix}\\
    & = (\widetilde{x_1}(u), \widetilde{x_2}(u),0).    
\end{align*}
(3). A direct computation shows
\begin{equation}\label{5.15}
\left\{
\begin{aligned}
  & ~\widetilde{x_1}'(u) = \frac{f'^2(u) + \left(c-\varphi'(u)\right)^2}{\lambda_1DB^2}\sin{\varphi(u)}\cosh{A},\\
  & ~\widetilde{x_2}'(u) = -\frac{f'^2(u) + \left(c-\varphi'(u)\right)^2}{\lambda_2DB^2}\cos{\varphi(u)}\cosh{A}.
\end{aligned}
\right.
\end{equation}
As a result, we get 
\begin{align*}
   & \widetilde{x_1}'^2(u) + \widetilde{x_2}'^2(u) \\
   = & ~\frac{\left[f'^2(u) + \left(c-\varphi'(u)\right)^2\right]^2}{D^2B^4}\left(\frac{1}{\lambda_1^2}\sin^2{\varphi(u)} + \frac{1}{\lambda_2^2}\cos^2{\varphi(u)}\right)\cosh^2{A}>0.
\end{align*}
This means $\widetilde{\gamma}$ is a smooth curve, thus $\gamma$ is smooth as well.\\
(4). Since $\widetilde{x_1}(u+U)=-\widetilde{x_1}(u)$ and $\widetilde{x_2}(u+U)=-\widetilde{x_2}(u)$, the curve $\widetilde{\gamma}$ is also invariant by rotation of angle $\pi$ around the $x_3$-axis. Consequently, we may first consider the half of the curve $\widetilde{\gamma}$ corresponding to $u\in(-\frac{U}{2},\frac{U}{2})$. On this interval, we have $\cos{\varphi(u)}>0$, thus $\widetilde{x_2}'(u)<0$. This fact tells us that the function $\widetilde{x_2}(u)$ is a decreasing injection. In addition, thanks to Proposition \ref{prop 5.1}, we obtain that
\begin{align*}
    \frac{{\ud} \widetilde{x_1}}{{\ud} \widetilde{x_2}} = -\frac{\lambda_2}{\lambda_1}\tan{\varphi(u)}
\end{align*}
is a decreasing function of $u$, thus $\frac{{\ud}^2\widetilde{x_1}}{{{\ud}\widetilde{x_2}}^2}>0$. Therefore, the half of $\widetilde{\gamma}$ corresponding to $u\in(-\frac{U}{2},\frac{U}{2})$ is convex and embedded. Then by the symmetries mentioned above, the whole curve $\widetilde{\gamma}$ is convex and embedded. Moreover, the intersection set $\gamma$ is also an embedded convex curve.     
\end{proof}

\begin{remark}\label{rmk 5.3}
From Remark \ref{rmk 5.2} as well as the proof of Proposition \ref{prop 5.5}, we can see that the $x_3$-axis is actually contained in the interior of the catenoid $\mathfrak{C}_c$.
\end{remark}

\begin{proposition}\label{prop 5.6}
For every $c>0$, the catenoid $\mathfrak{C}_c$ is properly embedded.
\end{proposition}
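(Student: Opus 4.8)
The plan is to show that $\mathfrak{C}_c$ is embedded and that the immersion is proper, relying heavily on the convex-curve analysis already established in Proposition \ref{prop 5.5}. Since the surface is foliated by the level curves $\gamma$ of $x_3$, each sitting in its own horizontal plane $\{x_3 = \lambda_1\lambda_2\mu\}$, and we have shown each such $\gamma$ is a closed embedded convex curve, the first key observation is that two points $X(z_1), X(z_2)$ with $X(z_1) = X(z_2)$ must have the same third coordinate, hence (by the formula $x_3 = -\lambda_1\lambda_2 Dv + \lambda_1\lambda_2 G(u)$ together with Proposition \ref{prop 5.1}(1) giving $\varphi$, and thus $u$, genuinely varying) lie on the same level curve. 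So embeddedness of the whole surface reduces to embeddedness within each fixed level, which is exactly what the convexity argument in Proposition \ref{prop 5.5}(4) provides once we account for the $Z$-periodicity from Theorem \ref{thm 5.2}.

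\textbf{Embeddedness.} First I would fix the horizontal level $\{x_3 = \lambda_1\lambda_2\mu\}$ and recall that the condition $x_3(u+iv) = \lambda_1\lambda_2\mu$ forces $v = \frac{G(u)-\mu}{D}$, so the intersection is parametrized by the single variable $u$ via $\gamma(u) = X(u + i\frac{G(u)-\mu}{D})$. Proposition \ref{prop 5.5} shows $\gamma$ is a closed embedded convex curve with $\gamma(u+2U) = \gamma(u)$. The remaining point is to verify that distinct points of the domain $\mathbb{C}/Z\mathbb{Z}$ (the quotient on which $X$ truly descends to an annulus by Theorem \ref{thm 5.2}) map to distinct points: if $X(z_1) = X(z_2)$ then the third coordinates agree, placing both on the same $\gamma$; embeddedness of $\gamma$ then forces $u_1 \equiv u_2 \pmod{2U}$, and matching the $v$-coordinates through $v = \frac{G(u)-\mu}{D}$ closes the argument modulo the lattice generated by $Z$. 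I would make explicit that the relevant period lattice is generated by $Z = 2U - i\frac{2f(U)}{c}$, and that the horizontal translation $u \mapsto u + 2U$ corresponds precisely to traversing $\gamma$ once, so no extra identifications occur.

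\textbf{Properness.} Next I would establish that $X$ is proper, i.e. that preimages of compact sets are compact, which on the annulus $\mathbb{C}/Z\mathbb{Z}$ amounts to showing that the surface escapes every compact set as $|v|$ (equivalently, as $|A| = |f(u)+cv|$) tends to infinity. The key estimate is that the third coordinate $x_3 = -\lambda_1\lambda_2 Dv + \lambda_1\lambda_2 G(u)$ grows linearly in $v$ on the quotient (since $G(u)$ is controlled by $u \bmod 2U$ up to the bounded oscillation $G(u) - \frac{u}{2U}G(2U)$), while simultaneously the transverse coordinates $x_1, x_2$ carry a factor $\cosh A$ through the $M_i$ and hence blow up as $|A| \to \infty$. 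Concretely, the induced metric \eqref{5.13} contains the factor $\cosh^2(f(u)+cv)$, so the surface is complete and the ends — where $v \to \pm\infty$ — go off to infinity in $\widetilde{E(2)}$; combining the linear growth of $x_3$ in one regime with the exponential growth of the horizontal part in the other shows that no compact subset of $\widetilde{E(2)}$ contains an unbounded sequence of images.

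\textbf{The main obstacle} I anticipate is the properness argument rather than embeddedness: embeddedness follows almost formally from the level-curve foliation and Proposition \ref{prop 5.5}, but properness requires a careful bookkeeping of how $x_1, x_2, x_3$ jointly grow, since $G(u)$ is only quasi-periodic (it satisfies $G(u+U) = G(u)+G(U)$ by Proposition \ref{prop 5.4}) and one must disentangle the $u$- and $v$-dependence on the quotient annulus. I would handle this by working on a fundamental domain $u \in [0,2U)$, $v \in \mathbb{R}$, where $u$ ranges over a compact set so that all the bounded coefficients ($B$, $\varphi'$, $f'$, $\cos\varphi$, $\sin\varphi$) and the oscillatory part of $G$ are uniformly controlled, leaving the genuine escape to infinity governed entirely by the single variable $v$ through the $\cosh A$ and linear $-\lambda_1\lambda_2 Dv$ terms; uniform bounds on the compact $u$-factor then upgrade the evident pointwise escape to the compactness of preimages.
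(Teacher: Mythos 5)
Your proposal is correct and follows essentially the same route as the paper: embeddedness is deduced from the level-curve foliation of Proposition \ref{prop 5.5} (each plane $\{x_3=\lambda_1\lambda_2\mu\}$ meets the surface in a closed embedded convex curve, and $x_3$ separates the levels), and properness comes from the fact that on a fundamental domain of $\mathbb{C}/Z\mathbb{Z}$ a diverging path forces $|A|=|f(u)+cv|\to\infty$, hence $x_3$ (linear in $v$ up to bounded terms) diverges. The paper's proof is just a terser version of yours; your extra bookkeeping (the lattice closure, which silently uses $H(c,\widetilde{\theta_{c}})=0$, and the $\cosh A$ growth of $x_1,x_2$) is sound but not needed beyond the divergence of $x_3$.
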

\begin{proof}
We already know from Proposition \ref{prop 5.5} that $\mathfrak{C}_c$ is embedded. For each diverging path $\alpha$ on $\mathfrak{C}_c$, the function $A$ is also diverging, thus $x_3$ is diverging. This proves that $\mathfrak{C}_c$ is properly embedded.  
\end{proof}

\begin{proposition}\label{prop 5.7}
For each $c>0$, the catenoid $\mathfrak{C}_c$ is conformally equivalent to $\mathbb{C}\setminus\{0\}$.
\end{proposition}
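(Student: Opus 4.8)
The plan is to realize $\mathfrak{C}_c$ as a quotient of $\mathbb{C}$ by a single translation and to identify that quotient with $\mathbb{C}\setminus\{0\}$. The key input is Theorem~\ref{thm 5.2}: the conformal minimal immersion $X:\mathbb{C}\to\widetilde{E(2)}$ that defines $\mathfrak{C}_c$ satisfies $X(z+Z)=X(z)$ for the fixed $Z=2U-i\frac{2f(U)}{c}\in\mathbb{C}\setminus\{0\}$. Hence $X$ descends to a map $\bar X:\mathbb{C}/(Z\mathbb{Z})\to\widetilde{E(2)}$, which is again a conformal immersion; since $X$ is conformal, the conformal structure carried by $\mathfrak{C}_c$ through its induced metric (\ref{5.13}) is precisely the one pulled back by $\bar X$.

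First I would record the standard fact that the cylinder $\mathbb{C}/(Z\mathbb{Z})$ is biholomorphic to $\mathbb{C}\setminus\{0\}$: the assignment $[z]\mapsto\exp\!\left(\frac{2\pi i z}{Z}\right)$ is well defined on the quotient because it is invariant under $z\mapsto z+Z$, it is holomorphic and surjective onto $\mathbb{C}\setminus\{0\}$, and $\exp(2\pi i z_1/Z)=\exp(2\pi i z_2/Z)$ holds exactly when $z_1-z_2\in Z\mathbb{Z}$, so it is injective modulo $Z\mathbb{Z}$. This reduces the proposition to showing that $\mathfrak{C}_c$, with its induced conformal structure, is conformally the cylinder $\mathbb{C}/(Z\mathbb{Z})$.

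Next I would upgrade $\bar X$ to a conformal diffeomorphism onto $\mathfrak{C}_c$ by showing it is injective. This is where the embeddedness established in Propositions~\ref{prop 5.5} and~\ref{prop 5.6} enters: since $\mathfrak{C}_c$ is properly embedded, distinct points of the parametrizing cylinder $\mathbb{C}/(Z\mathbb{Z})$ have distinct images, i.e.\ $Z\mathbb{Z}$ is the full period group of $X$ and $\bar X$ is one-to-one. An injective conformal immersion onto its image is a biholomorphism onto $\mathfrak{C}_c$ equipped with the conformal structure of (\ref{5.13}), and combining this with the previous step yields $\mathfrak{C}_c\cong\mathbb{C}/(Z\mathbb{Z})\cong\mathbb{C}\setminus\{0\}$.

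The point that needs genuine care, and the one I regard as the main obstacle, is precisely this identification of the full period group as the rank-one lattice $Z\mathbb{Z}$; a priori the periods of $X$ could form a rank-two lattice, in which case the quotient would be a compact torus and the conclusion would fail. Beyond invoking embeddedness, one can rule this out directly from noncompactness: local injectivity of the immersion makes the period group a discrete lattice containing $Z\neq 0$, and by (\ref{5.9}) the third coordinate is $x_3=-\lambda_1\lambda_2 D v+\lambda_1\lambda_2 G(u)$, where the defining relation $H(c,\widetilde{\theta_c})=0$ of (\ref{5.11}) forces $G(U)=-\frac{D}{c}f(U)\neq 0$ since $f(U)>0$. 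Hence, by Proposition~\ref{prop 5.4}, $x_3$ is unbounded along $u=kU$, $v=0$, so $\mathfrak{C}_c$ cannot be compact and the period lattice has rank exactly one. Once this is in place, the remaining verifications are the routine ones described above.
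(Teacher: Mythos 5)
Your proposal is correct and takes essentially the same route as the paper, whose entire proof is the one-sentence observation that, by Theorem~\ref{thm 5.2}, $X$ descends to a conformal bijective parametrization $\widetilde{X}:\mathbb{C}/(Z\mathbb{Z})\rightarrow\mathfrak{C}_c$, with the biholomorphism $\mathbb{C}/(Z\mathbb{Z})\cong\mathbb{C}\setminus\{0\}$ left implicit. Your extra steps---the explicit exponential map $[z]\mapsto e^{2\pi i z/Z}$, and ruling out a rank-two period lattice via the unboundedness of $x_3$ (using $H(c,\widetilde{\theta_{c}})=0$, hence $G(U)=-\frac{D}{c}f(U)\neq 0$)---merely supply justifications that the paper asserts without proof.
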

\begin{proof}
By Theorem \ref{thm 5.2}, the immersion $X:\mathbb{C}\rightarrow\widetilde{E(2)}$ induces a conformal bijective parametrization of $\mathfrak{C}_c$ by $\widetilde{X}:\mathbb{C}/(Z\mathbb{Z})\rightarrow\mathfrak{C}_c$.
\end{proof}

In the end of this section, we would like to study some curvature properties of the catenoid $\mathfrak{C}_c$. Taking advantage of the expression (\ref{5.13}) for the induced metric $ds^2=\rho^2|dz|^2$, we can compute
\begin{equation*}
\begin{aligned}
\Delta \ln \rho 
&= \frac{c^{2}+f'^2(u)}{\cosh ^{2}A}+\frac{\left(\lambda_{1}^{2}-\lambda_{2}^{2}\right) \sin ^{2}\left(2 \varphi(u)\right)\left[f'^2(u)+\left(\varphi^{\prime}(u)-c\right)^{2}\right]}{4B^2} \\
& \quad+\frac{\left(\lambda_{1}^{2}-\lambda_{2}^{2}\right) \varphi^{\prime}(u)\left[\left(\varphi^{\prime}(u)-c\right) \cos \left(2 \varphi(u)\right) - f'(u)\tanh {A}\sin \left(2 \varphi(u)\right)\right]}{B},
\end{aligned}
\end{equation*}
hence the Gauss curvature is
\begin{equation*}
\begin{aligned}
\mathcal{K} &=-\frac{B^2\left(c^{2}+f'^2(u)\right)}{\cosh ^{4}{A}\left[f'^2(u)+\left(\varphi^{\prime}(u)-c\right)^{2}\right]}- \frac{\left(\lambda_{1}^{2} - \lambda_{2}^{2}\right)^{2} \sin ^{2}\left(2 \varphi(u)\right)}{4 \cosh ^{2}{A}} \\
&\quad-\frac{\left(\lambda_{1}^{2}-\lambda_{2}^{2}\right)B \varphi^{\prime}(u)\left[\left(\varphi^{\prime}(u)-c\right) \cos \left(2 \varphi(u)\right) - f'(u)\tanh {A}\sin \left(2 \varphi(u)\right)\right]}{\cosh ^{2}{A}\left[f'^2(u)+\left(\varphi^{\prime}(u)-c\right)^{2}\right]} .
\end{aligned}
\end{equation*}

A natural question is to determine the total absolute curvature of the catenoid $\mathfrak{C}_c$. Let us use ${\ud}A$ to represent the area element of $\mathfrak{C}_c$, then 
\begin{align*}
    |\mathcal{K}|{\ud}A = \left|\Delta \ln \rho\right|{\ud}u \mathrm{d}v.
\end{align*}
(1). If $\lambda_1=\lambda_2=1$, then we may compute 
\begin{align*}
    \int_{\mathfrak{C}_c}|\mathcal{K}|{\ud}A = \int_{-\infty}^{\infty}\int_{-U}^{U}\frac{c^2+D^2}{\cosh^2{(Du+cv)}}{\ud}u \mathrm{d}v = \frac{4(c^2+D^2)U}{c} < +\infty.
\end{align*}
Actually in this case we have
\begin{align*}
    U = \frac{\pi}{\sqrt{c^2 + 2\cos{\widetilde{\theta_{c}}}-D^2}}
\end{align*}
and 
\begin{align*}
    H(c,\widetilde{\theta_{c}}) = \frac{D^2+c^2-c\sqrt{c^2 + 2\cos{\widetilde{\theta_{c}}}-D^2}}{\sqrt{c^2 + 2\cos{\widetilde{\theta_{c}}}-D^2}}\pi = 0.
\end{align*}
Hence the finite total absolute curvature is $4\pi$. This corresponds to the classic catenoids in $\mathbb{R}^3$.\\
(2). In the case when $\lambda_1 > \lambda_2 > 0$, we choose $u_0\in\mathbb{R}$ satisfying $\varphi(u_0)=\frac{\pi}{4}$. For a sufficiently small $\epsilon>0$, let us consider a region $N\subset\mathbb{C}$ in which $u\in(u_0-\epsilon,u_0+\epsilon)$ and $f(u)+cv\ll0$. Then the total absolute curvature
\begin{align*}
    \int_{\mathfrak{C}_c}|\mathcal{K}|{\ud}A = \int_{-\infty}^{\infty}\int_{-U}^{U}\left|\Delta \ln \rho\right|{\ud}u \mathrm{d}v\geq \int_{N} \left|\Delta \ln \rho\right|{\ud}u \mathrm{d}v = +\infty.
\end{align*}




\begin{figure}[H]
  \centering
\begin{subfigure}{0.49\textwidth}
    \includegraphics[width=0.95\textwidth]{./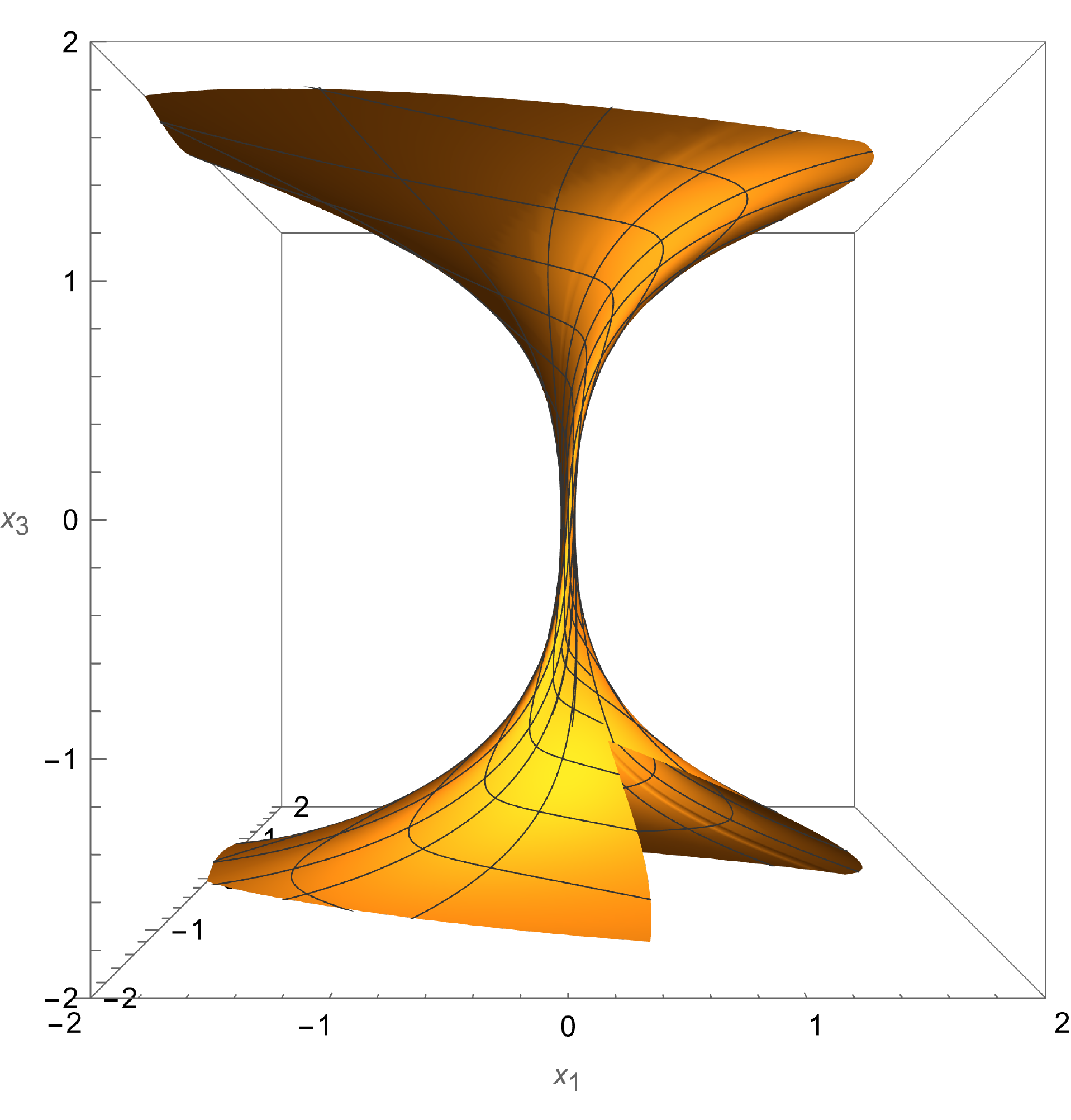}\\[-0.5cm]
    \caption{Figure 1 }
  \end{subfigure}
    \begin{subfigure}{0.49\textwidth}
    \includegraphics[width=0.95\textwidth]{./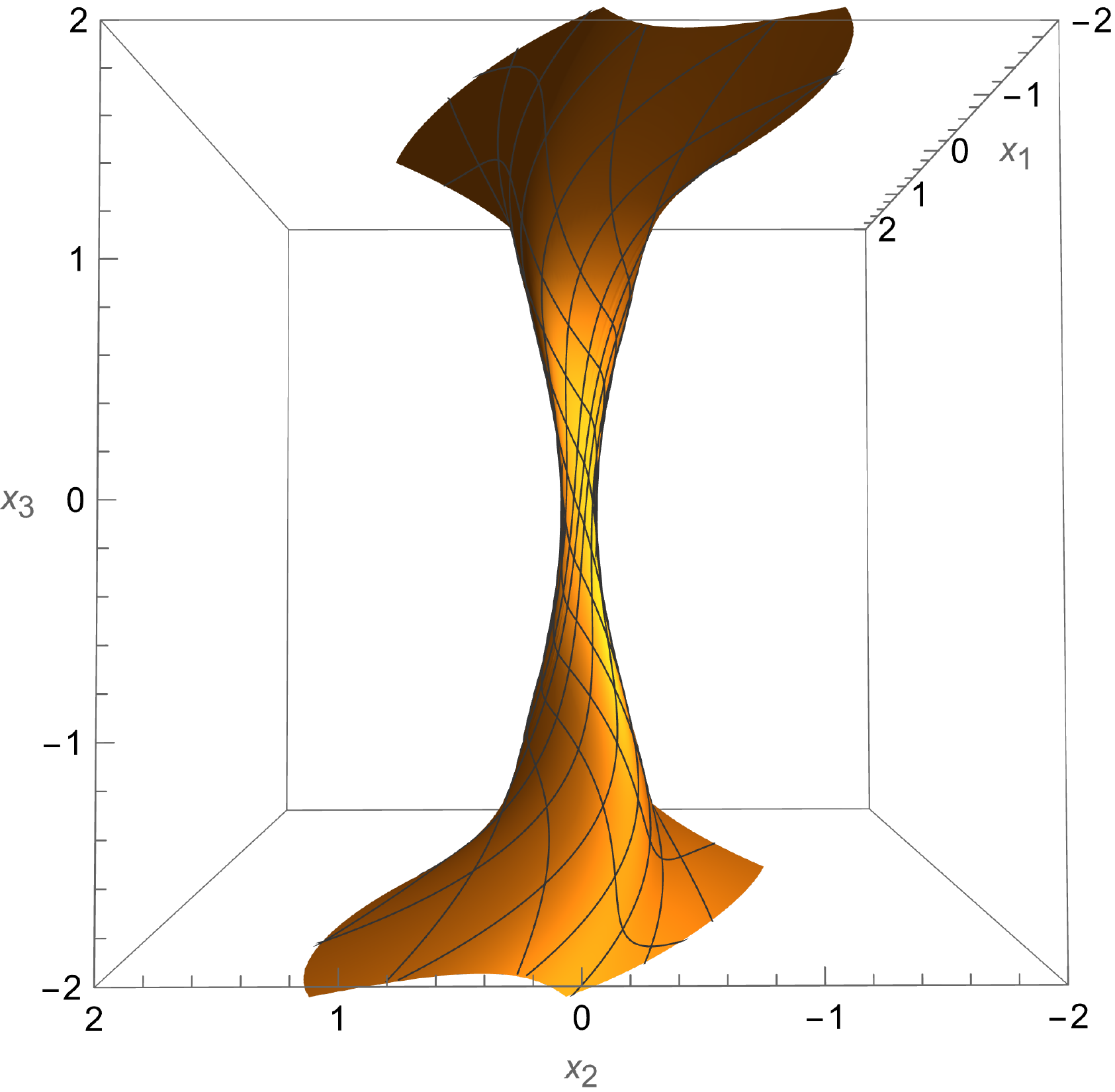}\\[-0.5cm]
    \caption{Figure 2 }
  \end{subfigure}\\[5mm]
    \caption{Catenoid with $\lambda_1=5$, $\lambda_2=1$, $c=2$, created by Mathematica.}
 \label{fig:simulation3}
\end{figure}



 \section{The limit of catenoids and a half-space theorem in $\widetilde{E(2)}$}

The goal of this section is to study the limit of catenoids $\mathfrak{C}_c$ when the parameter $c$ tends to $+\infty$. Since the functions mentioned in Section 5 depend on the parameter $c$, we will denote them by $\varphi_c,f_c,U_c, D_c,G_c,A_c$, etc. As an application, we get a half-space theorem for minimal surfaces in $\widetilde{E(2)}$.

\begin{proposition}\label{prop 6.1}
Suppose that $(\Tilde{u},\Tilde{v})\in\mathbb{R}^2$ and $c>0$. Let us make a change of variables 
\begin{align*}
    u_c = \frac{\Tilde{u}}{c},\quad v_c = \frac{2\ln{c}+\Tilde{v}}{c}.
\end{align*}
Then when $c\rightarrow+\infty$, the catenoid $\mathfrak{C}_c$ has the limit 
\begin{align*}
\lim\limits_{c\to +\infty}X_c\left(u_c + iv_c\right)
     =\begin{pmatrix}
        -\frac{1}{2\lambda_1}\cos{\Tilde{u}}e^{\Tilde{v}}\\
        -\frac{1}{2\lambda_2}\sin{\Tilde{u}}e^{\Tilde{v}}\\
        0
        \end{pmatrix}.
\end{align*}
\end{proposition}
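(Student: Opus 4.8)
The plan is to insert the change of variables directly into the explicit coordinate formulas of Theorem \ref{thm 5.1} and to track the order in $c$ of every factor, exploiting that the prescription $v_c = (2\ln c + \tilde v)/c$ is engineered so that $c\,v_c = 2\ln c + \tilde v$. First I would record the elementary asymptotics at the point $u_c + i v_c$. Since $\varphi_c'(s) = c\sqrt{1 + (2\cos\widetilde{\theta_c}B - D_c^2 B^2)/c^2}$ with $B\in[\lambda_2^2,\lambda_1^2]$ bounded and $\cos\widetilde{\theta_c},\,D_c\to 0$ by Lemma \ref{lem 5.3}, the substitution $s=t/c$ gives $\psi_c(t):=\varphi_c(t/c)$ with $\psi_c'(t)\to 1$ uniformly on compacts, hence $\varphi_c(u_c)=\psi_c(\tilde u)\to\tilde u$; consequently $B(u_c)\to\lambda_1^2\cos^2\tilde u + \lambda_2^2\sin^2\tilde u$, $\cos\varphi_c(u_c)\to\cos\tilde u$ and $\sin\varphi_c(u_c)\to\sin\tilde u$. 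Because $f_c'(u)=D_cB=O(1/c)$ and $u_c=O(1/c)$, one gets $f_c(u_c)=O(1/c^2)\to 0$, so $A_c:=f_c(u_c)+c\,v_c = 2\ln c + \tilde v + o(1)$ and therefore $\cosh A_c\sim\sinh A_c\sim\tfrac12 c^2 e^{\tilde v}$. Writing $c-\varphi_c'(s) = (-2\cos\widetilde{\theta_c}B + D_c^2 B^2)/(c+\varphi_c'(s))$ and using $\cos\widetilde{\theta_c}\to 0$ yields $c-\varphi_c'(u_c)=o(1/c)$ uniformly, whence $G_c(u_c)=o(1/c^2)$; combined with $D_c v_c = O(\ln c/c^2)$ this gives $x_3(u_c+iv_c)=-\lambda_1\lambda_2 D_c v_c + \lambda_1\lambda_2 G_c(u_c)\to 0$, so $\cos x_3\to 1$, $\sin x_3\to 0$ and in fact $\sin x_3 = O(\ln c/c^2)$.

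With these estimates I would substitute into the formulas for $x_1,x_2$ and isolate the dominant contribution. The common prefactor is $\tfrac{1}{(c^2+\lambda_1^2\lambda_2^2 D_c^2)B}\sim\tfrac{1}{c^2 B}$. From the definitions (\ref{5.10}), using $\cos x_3\to 1$ and noting that the second term of $M_1$ is $\lambda_1\lambda_2 D_c\sin x_3\sinh A_c = O(\ln c/c)$, one checks $M_1\sim c\cosh A_c\sim\tfrac12 c^3 e^{\tilde v}$ and similarly $M_2\sim c\sinh A_c\sim\tfrac12 c^3 e^{\tilde v}$, while $M_3,M_4 = O(c\ln c)$ (their leading parts being $c\sin x_3\sinh A_c$ and $c\sin x_3\cosh A_c$). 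Combining with the coefficients $f_c'(u_c)=\tfrac{\sin\widetilde{\theta_c}}{c}B\sim\tfrac{B}{c}$ and $c-\varphi_c'(u_c)=o(1/c)$, the four terms in the bracket defining $x_1$ have orders $O(c^2)$ (the $f'\cos\varphi\,M_1$ term), $o(c^2)$ (the $(c-\varphi')\sin\varphi\,M_2$ term), $o(\ln c)$ and $O(\ln c)$ respectively, so after multiplication by the prefactor only the first survives. Its contribution is $-\tfrac{\sin\widetilde{\theta_c}\cos\varphi_c(u_c)\cos x_3\cosh A_c}{\lambda_1(c^2+\lambda_1^2\lambda_2^2 D_c^2)}(1+o(1))\to -\tfrac{1}{2\lambda_1}\cos\tilde u\,e^{\tilde v}$, the first coordinate of the claimed limit. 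The identical bookkeeping for $x_2$ shows that only the $\tfrac{1}{\lambda_2}f'\sin\varphi\,M_1$ term survives, producing $-\tfrac{1}{2\lambda_2}\sin\tilde u\,e^{\tilde v}$, and $x_3\to 0$ was already established.

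I expect the main obstacle to be the order-of-magnitude bookkeeping rather than any conceptual difficulty: the quantities $M_3,M_4$ individually grow like $c\ln c$ through the factor $\sin x_3$, and one must verify that they are nonetheless annihilated after multiplication by the small coefficients $(c-\varphi_c')$ or $f_c'$ together with the $O(1/c^2)$ prefactor. The only nonelementary input is Lemma \ref{lem 5.3}, which forces $\sin\widetilde{\theta_c}\to 1$ (so that $f_c'(u_c)\sim B/c$ gives the surviving terms a nonzero limit) and $\cos\widetilde{\theta_c}\to 0$ (so that $c-\varphi_c'(u_c)=o(1/c)$ kills the competing $M_2$ term). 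It should be emphasised that the precise rate $v_c=(2\ln c+\tilde v)/c$ is exactly what makes $\cosh A_c\sim\tfrac12 c^2 e^{\tilde v}$ cancel the $c^2$ from the prefactor and the $1/c$ from $f_c'$, leaving a finite nonzero limit; any other growth rate of $v_c$ would collapse the surface to a point or send it to infinity.
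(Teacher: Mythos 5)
Your proposal is correct and follows essentially the same route as the paper: both establish the key asymptotics $f_c(u_c)=O(1/c^2)$, $G_c(u_c)=O(1/c^2)$, $\cosh A_c\sim\sinh A_c\sim\tfrac12 c^2e^{\tilde v}$, $\varphi_c(u_c)\to\tilde u$ and $c-\varphi_c'(u_c)=o(1/c)$ (all resting on Lemma \ref{lem 5.3}), and then substitute into the coordinate formulas of Theorem \ref{thm 5.1}. The only difference is cosmetic: you obtain $\varphi_c(u_c)\to\tilde u$ by rescaling instead of the paper's sandwich bound, and you carry out explicitly the term-by-term bookkeeping for $x_1,x_2$ that the paper leaves implicit in its final sentence.
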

\begin{proof}
We have seen in Lemma \ref{lem 5.3} that $\lim\limits_{c\to +\infty}\widetilde{\theta_{c}} = \frac{\pi}{2}$, hence $D_c\sim\frac{1}{c}$. By utilizing the formula (\ref{5.2}), we can estimate 
\begin{align*}
    \left|(f_c)'(u)\right| = \left|D_{c}B_{c}\right|\leq\frac{\lambda_1^2}{c},
\end{align*}
whence $|f_{c}|\leq\frac{\lambda_1^2}{c}|u|$ and $f_{c}(u_{c})=O(\frac{1}{c^2})$. Similarly, from (\ref{5.8}) we get
\begin{align*}
    \left|(G_{c})'(u)\right| =\left|\frac{D_{c}^2B_{c}-2\cos{\widetilde{\theta_{c}}}}{c+{\varphi_{c}'(u)}}\right|\leq\frac{\lambda_1^2}{c^3}+\frac{2\cos{\widetilde{\theta_{c}}}}{c},
\end{align*}
which implies $G_{c}(u_{c})=O(\frac{1}{c^2})$. By the definition of $A_c$, we have 
\begin{align*}
    A_c(u_c+iv_c) = 2\ln{c}+\Tilde{v}+O(\frac{1}{c^2}),
\end{align*}
thus $\cosh{A_c(u_c+iv_c)} \sim\frac{1}{2}c^2e^{\Tilde{v}}$ and $\sinh{A_c(u_c+iv_c)} \sim\frac{1}{2}c^2e^{\Tilde{v}}$. As for the function $\varphi_{c}$, when $c>\lambda_1^2\geq1$, we have 
\begin{align*}
    \sqrt{c^2-\lambda_1^4}\leq{\varphi_c}'(u)\leq\sqrt{c^2+2\lambda_1^2}.
\end{align*}
Hence 
\begin{align*}
     \sqrt{c^2-\lambda_1^4}u_{c}\leq\varphi_c(u_{c})\leq\sqrt{c^2+2\lambda_1^2}u_{c}.
\end{align*}
This means $\varphi_c(u_c)\rightarrow\Tilde{u}$. In addition, the formula (\ref{5.1}) tells us 
\begin{align*}
    c-{\varphi_c}'(u) = \frac{D_{c}^2B_{c}^2 - 2\cos{\widetilde{\theta_{c}}}B_c}{c+{\varphi_c}'(u)}.
\end{align*}
Since 
\begin{align*}
    0\leq c\left|\frac{D_{c}^2B_{c}^2 - 2\cos{\widetilde{\theta_{c}}}B_c}{c+{\varphi_c}'(u)}\right|\leq \left|D_{c}^2B_{c}^2 - 2\cos{\widetilde{\theta_{c}}}B_c\right|\leq\lambda_1^2\left|D_{c}^2B_{c} - 2\cos{\widetilde{\theta_{c}}}\right|,
\end{align*}
we obtain $c-{\varphi_c}'(u_c)=o(\frac{1}{c})$. Thanks to all of these preparations, we may determine the limits of coordinate functions given in Theorem \ref{thm 5.1} to be 
\begin{equation*}
\left\{
\begin{aligned}
  & ~\lim\limits_{c\to +\infty}(x_1)_c(u_c+iv_c) = -\frac{1}{2\lambda_1}\cos{\Tilde{u}}e^{\Tilde{v}},\\
  & ~\lim\limits_{c\to +\infty}(x_2)_c(u_c+iv_c) = -\frac{1}{2\lambda_2}\sin{\Tilde{u}}e^{\Tilde{v}},\\
  & ~\lim\limits_{c\to +\infty}(x_3)_c(u_c+iv_c) = 0.
\end{aligned}
\right.
\end{equation*}
The proof is completed.
\end{proof}

\begin{remark}\label{rmk 6.1}
The geometric signification of this result is that the part of the catenoid $\mathfrak{C}_c$ with $A>0$ converges to the punctured plane $\{x_3=0\}\setminus\{(0,0,0)\}$ as the parameter $c$ tends to $+\infty$. Using a similar change of variables, we may prove that when $c\rightarrow+\infty$, the part of the catenoid $\mathfrak{C}_c$ with $A<0$ converges to the punctured plane $\{x_3=0\}\setminus\{(0,0,0)\}$ as well. With the same argument, we can see that the half of the catenoid $\mathfrak{C}_c$ with $x_3\geq0$ also converges to $\{x_3=0\}\setminus\{(0,0,0)\}$ as the parameter $c$ tends to $+\infty$.
\end{remark}

\begin{proposition}\label{prop 6.2}
When $c\rightarrow+\infty$, the intersection curve of the catenoid $\mathfrak{C}_c$ with the plane $\{x_3 = 0\}$ converges uniformly to the origin.
\end{proposition}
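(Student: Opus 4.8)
The plan is to realize the intersection curve explicitly, and then show that it shrinks to the origin at rate $O(1/c^2)$. By Proposition \ref{prop 5.5} with $\mu=0$, the intersection of $\mathfrak{C}_c$ with $\{x_3=0\}$ is the closed curve obtained by restricting $X_c$ to the locus $v=G_c(u)/D_c$; since $x_3\equiv 0$ there, no left translation is needed and the curve coincides with $\widetilde{\gamma}$, i.e. $\gamma_c(u)=(\widetilde{x_1}(u),\widetilde{x_2}(u),0)$ with $\widetilde{x_1},\widetilde{x_2}$ as displayed in the proof of Proposition \ref{prop 5.5}, and with $u$ ranging over the period $[0,2U_c]$. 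Geometrically this is the neck of the catenoid (Remark \ref{rmk 6.1} handled the two wings), so the goal is to prove $\sup_u\left(\widetilde{x_1}^2(u)+\widetilde{x_2}^2(u)\right)\to 0$ as $c\to+\infty$; because the ambient metric is smooth, this coordinate statement is exactly the asserted uniform convergence of $\gamma_c$ to $(0,0,0)$.

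The main work, and the main obstacle, is to control $A_c=f_c(u)+c\,G_c(u)/D_c$ along $\gamma_c$. I first record the uniform facts I will use: $D_c=\sin\widetilde{\theta_{c}}/c$ with $\sin\widetilde{\theta_{c}}\to 1$ and $\cos\widetilde{\theta_{c}}\to 0$ by Lemma \ref{lem 5.3}; $B\in[\lambda_2^2,\lambda_1^2]$; and from (\ref{5.5}) that $\sqrt{c^2-\lambda_1^4}\le\varphi_c'(u)\le\sqrt{c^2+2\lambda_1^2}$ for $c\ge 1$, whence $U_c\le\pi/\sqrt{c^2-\lambda_1^4}=O(1/c)$. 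Rewriting (\ref{5.1}) as $c-\varphi_c'=\frac{D_c^2B^2-2\cos\widetilde{\theta_{c}}B}{c+\varphi_c'}$ gives $|c-\varphi_c'(u)|\le\big(\lambda_1^4/c^2+2\lambda_1^2\cos\widetilde{\theta_{c}}\big)/c$, so $c(c-\varphi_c')\to 0$ uniformly. Writing $A_c(u)=\int_0^u\big(D_cB+\tfrac{c}{D_c}\tfrac{c-\varphi_c'}{B}\big)\,ds$ and substituting $\frac{c-\varphi_c'}{B}=\frac{D_c^2B-2\cos\widetilde{\theta_{c}}}{c+\varphi_c'}$, the integrand is bounded in absolute value by $D_c\lambda_1^2+D_c\lambda_1^2+4c\cos\widetilde{\theta_{c}}\le 2\lambda_1^2/c+4c\cos\widetilde{\theta_{c}}$ for $c$ large (using $\sin\widetilde{\theta_{c}}\ge\tfrac12$). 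Integrating over $[0,u]\subset[0,2U_c]$ and using $U_c=O(1/c)$ yields $|A_c(u)|=O(1/c^2)+O(\cos\widetilde{\theta_{c}})\to 0$ uniformly in $u$; hence $\cosh A_c\to 1$ and $\sinh A_c\to 0$ uniformly along $\gamma_c$.

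With $A_c$ under control the finish is routine. In the expressions (\ref{5.14}) for $J_1,\dots,J_4$ I have $c f_c'=\sin\widetilde{\theta_{c}}\,B$ bounded, $c(c-\varphi_c')\to 0$, and $\cosh A_c,\sinh A_c$ controlled, so $cJ_1$ and $cJ_3$ remain bounded while $J_2,J_4=o(1/c)$. Substituting these into the formulas for $\widetilde{x_1},\widetilde{x_2}$ of Proposition \ref{prop 5.5}, whose denominators satisfy $(c^2+\lambda_1^2\lambda_2^2D^2)B\ge\lambda_2^2c^2$, the dominant contributions $\frac{c}{\lambda_1}J_1$ and $\frac{c}{\lambda_2}J_3$ stay bounded and the remaining terms are $O(1/c)$ smaller, so $\widetilde{x_1}(u),\widetilde{x_2}(u)=O(1/c^2)$ uniformly in $u$. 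Therefore $\sup_u|\gamma_c(u)|=O(1/c^2)\to 0$, which is the claimed uniform convergence to the origin.

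The delicate point is precisely the estimate of $A_c$ in the second paragraph: the curve lives on $v=G_c(u)/D_c$ with $D_c\sim 1/c$, so the factor $c/D_c\sim c^2$ could a priori make $A_c$ blow up. Everything hinges on the cancellation forced by the period condition $H(c,\widetilde{\theta_{c}})=0$, which through Lemma \ref{lem 5.3} guarantees $\cos\widetilde{\theta_{c}}\to 0$ and thereby keeps both $c-\varphi_c'$ and the resulting $A_c$ uniformly small over the $O(1/c)$-long parameter interval; once this is established, no further difficulty remains.
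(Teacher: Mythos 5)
Your proof is correct and takes essentially the same route as the paper's: parametrize the intersection by $v=G_c(u)/D_c$, use the period bound $U_c=O(1/c)$ together with Lemma \ref{lem 5.3} and the estimates on $f_c'$ and $c-\varphi_c'$ to control $A_c$ uniformly over one period, and then conclude from the $(c^2+\lambda_1^2\lambda_2^2D_c^2)B_c\geq \lambda_2^2c^2$ denominator that the coordinates are $O(1/c^2)$ uniformly in $u$. The only (immaterial) difference is that you sharpen the key step to $A_c\to 0$ via $\cos\widetilde{\theta_{c}}\to 0$, whereas the paper settles for uniform boundedness of $A_c$, which already suffices against the $1/c^2$ prefactor.
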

\begin{proof}
The condition $(x_3)_c=0$ implies $v = \frac{G_c(u)}{D_c}$, thus the function $A$ can be written as 
\begin{align*}
    A_c(u) = f_c(u)+\frac{c}{D_c}G_c(u).
\end{align*}
As we have seen in the proof of Proposition \ref{prop 6.1}, we have $\left|(f_c)'(u)\right| \leq\frac{\lambda_1^2}{c}$ and
\begin{align*}
    \left|c-{\varphi_c}'(u)\right| = \left|\frac{D_{c}^2B_{c}^2 - 2\cos{\widetilde{\theta_{c}}}B_c}{c+{\varphi_c}'(u)}\right|\leq\frac{\lambda_1^2}{c}\left(\frac{\lambda_1^2}{c^2}+2\right).
\end{align*}
Therefore, with the formulas in the proof of Proposition \ref{prop 5.5}, one can get the following estimations 
\begin{align*}
    \left|(x_1)_c(u)\right| &\leq\frac{1}{\left(c^2+\lambda_1^2\lambda_2^2D_{c}^2\right)B_c}\left[\left(\frac{c}{\lambda_1}\left|f_{c}'(u)\right|+\lambda_1D_c\left|c-{\varphi_c}'(u)\right|\right)\cosh{A_c(u)}\right.\\
    &~~~~~\quad \left.+ \left(\frac{c}{\lambda_1}\left|c-{\varphi_c}'(u)\right|+\lambda_1D_c\left|f_{c}'(u)\right|\right)\sinh{\left|A_c(u)\right|}\right]\\
    &\leq \frac{1}{\lambda_2^2c^2}\left[\left(\lambda_1+\frac{\lambda_1^3}{c^2}\left(\frac{\lambda_1^2}{c^2}+2\right)\right)\cosh{A_c(u)}\right.\\
    &~~~~~\quad\left. + \left(\lambda_1\left(\frac{\lambda_1^2}{c^2}+2\right)+\frac{\lambda_1^3}{c^2}\right)\sinh{\left|A_c(u)\right|}\right]
\end{align*}
and 
\begin{align*}
    \left|(x_2)_c(u)\right| &\leq\frac{1}{\left(c^2+\lambda_1^2\lambda_2^2D_{c}^2\right)B_c}\left[\left(\frac{c}{\lambda_2}\left|f_{c}'(u)\right|+\lambda_2D_c\left|c-{\varphi_c}'(u)\right|\right)\cosh{A_c(u)}\right.\\
    &~~~~~\quad \left.+ \left(\frac{c}{\lambda_2}\left|c-{\varphi_c}'(u)\right|+\lambda_2D_c\left|f_{c}'(u)\right|\right)\sinh{\left|A_c(u)\right|}\right]\\
    &\leq \frac{1}{\lambda_2^2c^2}\left[\left(\frac{\lambda_1^2}{\lambda_2}+\frac{\lambda_1^2\lambda_2}{c^2}\left(\frac{\lambda_1^2}{c^2}+2\right)\right)\cosh{A_c(u)}\right.\\
    &~~~~~\quad\left. + \left(\frac{\lambda_1^2}{\lambda_2}\left(\frac{\lambda_1^2}{c^2}+2\right)+\frac{\lambda_1^2\lambda_2}{c^2}\right)\sinh{\left|A_c(u)\right|}\right].
\end{align*}
It is known from Theorem \ref{thm 5.2} that the function $A_c(u)$ is $2U_c$-periodic with 
\begin{align*}
    U_c = \mathlarger{\int}_{-1}^{1}\frac{{\ud}x}{\sqrt{(1-x^2)P_c(x)}}.
\end{align*}
When $c>\lambda_1^2\geq1$, we always have $P_c(x)\geq c^2-\lambda_1^4$, whence $U_c\leq\frac{\pi}{\sqrt{c^2-\lambda_1^4}}$. As a consequence, we get
\begin{align*}
    \left|A_c(u)\right|\leq\frac{\lambda_1^2\pi}{c\sqrt{c^2-\lambda_1^4}} + \frac{3\pi c}{\sin{\widetilde{\theta_{c}}}\sqrt{c^2-\lambda_1^4}}.
\end{align*}
Since $\lim\limits_{c\to +\infty}\sin{\widetilde{\theta_{c}}}=1$, we can see that the function $A_c$ is uniformly bounded when $c\rightarrow+\infty$. Therefore, the coordinate functions $(x_1)_c$ and $(x_2)_c$ are uniformly bounded as well when $c\rightarrow+\infty$. The conclusion can be drawn by taking the limits. 
\end{proof}

Thanks to all these preparations, we are able to prove the following half-space theorem for minimal surfaces in $\widetilde{E(2)}$.

\begin{theorem}\label{thm 6.1}
Suppose that $\Sigma$ is a properly immersed minimal surface in $\widetilde{E(2)}$ which is totally contained on the one side of a plane $Y$ parallel to $\{x_3 = 0\}$, then this surface $\Sigma$ must be a plane parallel to $Y$.  
\end{theorem}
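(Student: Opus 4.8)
The plan is to run the Hoffman--Meeks catenoidal barrier argument, feeding in the degeneration of the family $\mathfrak{C}_c$ established in Proposition \ref{prop 6.1}, Remark \ref{rmk 6.1} and Proposition \ref{prop 6.2}, together with the strong maximum principle for minimal surfaces.

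\emph{Reductions and the easy case.} Since every left translation is an isometry of $(\widetilde{E(2)},g(\lambda_1,\lambda_2))$ and a direct check on the metric (\ref{2.3}) shows that $\iota(x,y,z)=(x,-y,-z)$ is an isometry reversing the sign of $x_3$, I would first normalise the problem: composing the left translation by $(0,0,-a)$ (which sends $Y=\{x_3=a\}$ to $\{x_3=0\}$) with $\iota$ if necessary, we may assume $\Sigma\subset\{x_3\ge 0\}$. Next I would record that each slice $\{x_3=\mathrm{const}\}$ is minimal: its tangent plane is spanned by $E_1,E_2$ with unit normal $E_3$, and since $\nabla_{E_1}E_1=\nabla_{E_2}E_2=0$ have no $E_3$-component, the mean curvature vanishes. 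If $\Sigma$ meets $\{x_3=0\}$ at a point $p$, then $\Sigma$ is tangent to this minimal plane at the interior point $p$ and lies locally on one side of it; the strong maximum principle forces agreement near $p$, and by real-analyticity of minimal immersions and connectedness of $\Sigma$ one gets $\Sigma=\{x_3=0\}$, which is the conclusion. So it remains to treat $\Sigma\subset\{x_3>0\}$ and derive a contradiction under the assumption that $\Sigma$ is not a horizontal plane.

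\emph{The barrier family.} For $s\in\mathbb{R}$ set $\mathfrak{C}_c^{(s)}:=(0,0,s)*\mathfrak{C}_c$, a congruent minimal surface (again by left invariance) whose waist lies on $\{x_3=s\}$, symmetric about that plane by Remark \ref{rmk 5.2} and with the $x_3$-axis in its interior by Remark \ref{rmk 5.3}. The key input is the collapse: by Proposition \ref{prop 6.1}, Remark \ref{rmk 6.1} and Proposition \ref{prop 6.2}, as $c\to+\infty$ the catenoid $\mathfrak{C}_c$ degenerates onto the doubly covered plane $\{x_3=0\}$, its waist curve shrinking to the origin and its upper end becoming a graph over the exterior of a tiny disc that stays $C^1$-close to $\{x_3=0\}$ on compact sets. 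Using $\mathfrak{C}_c^{(s)}$ I would choose the parameters so that the barrier is disjoint from $\Sigma$ with $\Sigma$ lying in the unbounded complementary component of $\widetilde{E(2)}\setminus\mathfrak{C}_c^{(s)}$ --- taking $c$ large so the surface is nearly the flat plane $\{x_3=s\}$ and $s$ below the relevant region --- and then slide the barrier toward $\Sigma$ by increasing $s$ (equivalently, by fattening the catenoid through decreasing $c$), tracking the first parameter $s^\ast$ at which $\mathfrak{C}_c^{(s)}$ meets $\Sigma$.

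\emph{First contact, maximum principle, and the obstacle.} At the first-contact parameter there is a point $q$ where $\Sigma$ and $\mathfrak{C}_c^{(s^\ast)}$ are tangent with $\Sigma$ on one side of the catenoid. As both are minimal, the strong maximum principle (Hopf) gives agreement near $q$, hence $\Sigma=\mathfrak{C}_c^{(s^\ast)}$ everywhere by analyticity and connectedness. But the $x_3$-coordinate on a catenoid is unbounded below, since $x_3=-\lambda_1\lambda_2 Dv+\lambda_1\lambda_2 G(u)\to-\infty$ along one end, so $\mathfrak{C}_c^{(s^\ast)}\not\subset\{x_3\ge 0\}$, contradicting $\Sigma\subset\{x_3>0\}$; therefore $\Sigma$ must be a horizontal plane. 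I expect the main obstacle to lie entirely in making the sliding rigorous: one has to secure an initially disjoint configuration and then show that the first contact is attained at a \emph{finite interior} point rather than escaping to infinity, and that at $s^\ast$ the surface $\Sigma$ genuinely sits on one side of the barrier. This is where properness of both $\Sigma$ and $\mathfrak{C}_c$ enters, together with the quantitative control over the collapsing catenoids from Proposition \ref{prop 6.1} and Proposition \ref{prop 6.2}; the delicate point is precisely the configuration in which $\inf_\Sigma x_3$ is approached only asymptotically and never attained, which is what the catenoidal barrier is designed to catch. The resulting statement may then be compared with the general halfspace theorem of Mazet \cite{mazet2013general}.
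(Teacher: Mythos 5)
Your reductions are fine and essentially match the paper's (normalising $Y$ to $\{x_3=0\}$, minimality of the horizontal slices, and disposing of the touching case by the strong maximum principle; your reflection $\iota$ is indeed an isometry of $g(\lambda_1,\lambda_2)$). The genuine gap is in the core of the argument, which you explicitly defer as ``the main obstacle'': your sliding scheme with whole translated catenoids $\mathfrak{C}_c^{(s)}=(0,0,s)*\mathfrak{C}_c$ cannot be carried out. Three things fail. (i) \emph{Initial disjointness}: by (\ref{5.9}), $x_3=-\lambda_1\lambda_2 Dv+\lambda_1\lambda_2 G(u)$ is unbounded in both directions along $\mathfrak{C}_c$, and Proposition \ref{prop 6.1} gives collapse to the plane only on compact sets; so for every $(c,s)$ the barrier $\mathfrak{C}_c^{(s)}$ has points at all heights, its upper end flares into $\{x_3>0\}$ where $\Sigma$ lives, and there is no a priori choice of $(c,s)$ with $\mathfrak{C}_c^{(s)}\cap\Sigma=\varnothing$. (ii) \emph{Forced intersection at the other end of the parameter range}: since you never move $\Sigma$, which is disjoint from $\{x_3=0\}$, there is no reason any member of your family must meet $\Sigma$, so the contact parameter $s^\ast$ you want to track may be vacuous. (iii) \emph{Attained first contact}: nothing confines candidate contact points to a compact region, so the first contact can escape to infinity along the flaring ends. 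This escape-to-infinity phenomenon is exactly what a half-space theorem must rule out; it is the content of the theorem, not a technicality one can postpone.

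The paper's proof resolves precisely these points by moving $\Sigma$ rather than the barrier, and by using only \emph{half} catenoids with waist plane fixed at $Y$. One arranges $\Sigma\subset\{x_3\le 0\}$ with $\sup_\Sigma x_3=0$ and $\Sigma\cap Y=\varnothing$, then applies the small left translation $T_{\epsilon_0}$, where $\epsilon_0$ is produced by a compactness argument so that $T_{\epsilon_0}(\Sigma)$ crosses $Y$ but is disjoint from the fat upper half catenoid $\mathfrak{C}'_1=\mathfrak{C}_1\cap\{x_3\ge 0\}$ and from a compact disc $\Delta\subset Y$ containing all waist curves $\mathfrak{C}_c\cap Y$, $c\ge 1$ (here Proposition \ref{prop 6.2} guarantees $\Delta$ exists). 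Since $T_{\epsilon_0}(\Sigma)\subset\{x_3\le\epsilon_0\}$ and $\mathfrak{C}'_c\subset\{x_3\ge 0\}$, every possible intersection point lies in the slab $\{0\le x_3\le\epsilon_0\}$, where the catenoid pieces are uniformly compact ($u$ lies in a fundamental period, hence $v$ is bounded via (\ref{5.9})): this single observation is what makes the disjointness from $\mathfrak{C}'_1$ verifiable, forces intersections for large $c$ (because $T_{\epsilon_0}(\Sigma)$ crosses $Y$ away from $\Delta$ and $\mathfrak{C}'_c\to Y\setminus\{0\}$ by Proposition \ref{prop 6.1} and Remark \ref{rmk 6.1}), and makes $\xi=\inf\{c\ge 1 \mid T_{\epsilon_0}(\Sigma)\cap\mathfrak{C}'_c\neq\varnothing\}$ attained at an interior contact point (the boundary of $\mathfrak{C}'_\xi$ lies in $\Delta$). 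The maximum principle then gives $T_{\epsilon_0}(\Sigma)=\mathfrak{C}_\xi$, a contradiction since $\mathfrak{C}_\xi\not\subset\{x_3\le\epsilon_0\}$. If you prefer your normalisation $\Sigma\subset\{x_3\ge 0\}$, the same runs with $T_{-\epsilon_0}$ and lower half catenoids; but the translation of $\Sigma$, the restriction to half catenoids, and the slab compactness are indispensable, and they are exactly what your proposal lacks.
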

\begin{proof}
By applying an isometry of $\widetilde{E(2)}$ if necessary, i.e., a left multiplication of an element $(0,0,a)$, we may identify this plane $Y$ with $\{x_3 = 0\}$. Moreover, we can also suppose that $\Sigma\subset\{(x_1,x_2,x_3)\mid x_3\leq0\}$ and that $\Sigma$ is not contained in any half-space $\{(x_1,x_2,x_3)\mid x_3\leq-\epsilon\}$ for every $\epsilon>0$.

Let us assume that $\Sigma$ is not a plane parallel to $Y$, then the maximum principle tells us that $\Sigma\cap Y=\varnothing$. For a given $\epsilon>0$, we define a mapping 
\begin{align*}
    T_{\epsilon}:\widetilde{E(2)}&\longrightarrow \widetilde{E(2)},\\
    (x_1,x_2,x_3)&\mapsto(x_1\cos{\epsilon}-x_2\sin{\epsilon}, x_1\sin{\epsilon}+x_2\cos{\epsilon},x_3+\epsilon).
\end{align*}
This mapping is actually the left multiplication of the element $(0,0,\epsilon)$, thus an isometry of $\widetilde{E(2)}$. By the hypothesis we made above, we must have $T_{\epsilon}(\Sigma)\cap Y\neq\varnothing$ for $\epsilon$ small enough.

Let $c\geq1$, we denote the upper half catenoid by ${\mathfrak{C}'_{c}} = \mathfrak{C}_{c} \cap\{(x_1,x_2,x_3)\mid x_3\geq0\}$. Since the intersection curve of the catenoid $\mathfrak{C}_c$ with the plane $Y$ converges uniformly to the origin when $c\rightarrow+\infty$, we can find a compact $\Delta\subset Y$ which contains $0$ and $\mathfrak{C}_c\cap Y$ for all $c\geq1$. Then there exists a $\epsilon_{0}>0$ such that 
\begin{align*}
    T_{\epsilon_{0}}(\Sigma)\cap Y\neq\varnothing,\quad T_{\epsilon_{0}}(\Sigma)\cap \mathfrak{C}'_{1}=\varnothing,\quad T_{\epsilon_{0}}(\Sigma)\cap \Delta=\varnothing.
\end{align*}
We will prove this announcement by contradiction. If it is not true, then there is a sequence $(\epsilon_n)$ of positive real numbers which converges to $0$, together with a sequence of points $(p_n)$ such that $p_n\in T_{\epsilon_{n}}(\Sigma)\cap\left(\mathfrak{C}'_{1}\cup\Delta\right)$. When $n$ is sufficiently large, $p_n$ falls in the union of $\Delta$ and the part of $\mathfrak{C}'_{1}$ satisfying $0\leq x_3\leq 1$, which is a compact set. Therefore, we can suppose that the sequence $(p_n)$ has a limit point $p$. It is easy to see that $p\in\Delta\cap \Sigma$, which is not possible.

We fix the $\epsilon_{0}>0$ as above. From Proposition \ref{prop 6.1}, we have known that ${\mathfrak{C}'_{c}}$ converges smoothly to $Y\setminus\{0\}$ when $c\rightarrow+\infty$. As a result of this fact and the maximum principle, we may say that $T_{\epsilon_{0}}(\Sigma)\cap \mathfrak{C}'_{c}\neq\varnothing$ when $c$ is sufficiently large. On the other side, since $ T_{\epsilon_{0}}(\Sigma)\cap \mathfrak{C}'_{1}=\varnothing$, we must have $T_{\epsilon_{0}}(\Sigma)\cap \mathfrak{C}'_{c}=\varnothing$ for $c$ close enough to $1$. It is very natural to consider the non-empty set $\Xi = \{c\geq1\mid T_{\epsilon_{0}}(\Sigma)\cap \mathfrak{C}'_{c}\neq\varnothing\}$ and $\xi = \inf\limits_{c\in\Xi}{c}$. Clearly we get $\xi>1$. We are going to show that $\xi\in\Xi$.

It is trivial when $\xi$ is an isolated point. So we may focus on the situation when $\xi$ is not isolated. In this case, there is a decreasing sequence $(c_n)$ which converges to $\xi$ and a sequence of points $(q_n)$ with $q_n\in T_{\epsilon_{0}}(\Sigma)\cap \mathfrak{C}'_{c_n}$. Using Proposition \ref{prop 5.7}, we can write $q_n=X_{c_n}(u_n+iv_n)$ with $u_n\in[-U_{c_n},U_{c_n}]$ and $v_n\in\mathbb{R}$. Then the hypothesis $0\leq (x_3)_{c_n}(u_n+iv_n)\leq\epsilon_0$ is equivalent to 
\begin{align*}
    0\leq -D_{c_n}v_n + G_{c_n}(u_n)\leq\frac{\epsilon_0}{\lambda_1\lambda_2}.
\end{align*}
The condition $\xi\leq c_n\leq c_1$ implies $u_n$ is bounded, thus $G_{c_n}(u_n)$ is also bounded. It follows that $v_n$ is bounded as well. Since the sequence $(u_n+iv_n)$ is contained in a compact set of $\mathbb{C}$, it possesses a sub-sequence which has a limit point $u+iv\in\mathbb{C}$. Without loss of generality, we may consider this sub-sequence instead of the original one. Hence the sequence $q_n$ converges to a point $q\in T_{\epsilon_{0}}(\Sigma)\cap \mathfrak{C}'_{\xi}$. This means $\xi\in\Xi$.

By the definition of $\Xi$, there is a point $r\in T_{\epsilon_{0}}(\Sigma)\cap \mathfrak{C}'_{\xi}$. This point $r$ must be an interior point of $\mathfrak{C}'_{\xi}$ due to $\partial\mathfrak{C}'_{\xi}\subset\Delta$ and $T_{\epsilon_{0}}(\Sigma)\cap \Delta=\varnothing$. The fact that $ T_{\epsilon_{0}}(\Sigma)\cap \mathfrak{C}'_{c}=\varnothing$ for all $c<\xi$ implies that $\mathfrak{C}'_{\xi}$ remains on the one side of $T_{\epsilon_{0}}(\Sigma)$ in a small neighbourhood of $r$. It follows from the maximum principle that $T_{\epsilon_{0}}(\Sigma)=\mathfrak{C}_{\xi}$. This is impossible because we know that $\mathfrak{C}_{\xi}$ is not contained in the half-space $\{(x_1,x_2,x_3)\mid x_3\leq0\}$. This completes the proof.
\end{proof}

\begin{remark}\label{rmk 6.2}
The original idea of this proof is due to Hoffman and Meeks \cite{hoffman1990strong}. They proved the half-space theorem for minimal surfaces in $\mathbb{R}^3$ which can be regarded as the maximum principle at infinity. Daniel and Hauswirth obtained in their article \cite{daniel2009half} a vertical half-space theorem for minimal surfaces in the Heisenberg group $\textrm{Nil}_3$ by using a similar method. 
\end{remark}

\begin{remark}\label{rmk 6.3}
Mazet has proved a general half-space theorem for constant mean curvature surfaces in three-dimensional Lie groups (see \cite{mazet2013general}, Proposition 10). From this viewpoint, Theorem \ref{thm 6.1} is new in its proof rather than the announcement itself.    
\end{remark}



\bibliographystyle{plain}
\bibliography{Catenoid}


\end{document}